\documentclass[11pt]{amsart}

\usepackage[marginratio=1:1,width=480pt,tmargin=48pt,bmargin=55pt]{geometry}

\usepackage{amssymb,latexsym,amsmath,amsthm,amscd}
\usepackage{mathrsfs}   
\usepackage{fancyhdr}
 \usepackage{ifthen}

\pagestyle{fancy} 
\fancyhf{}

\usepackage{etoolbox}

\newcommand{\im}{\lrcorner\,}

\appto\appendix{\addtocontents{toc}{\protect\setcounter{tocdepth}{1}}}
\appto\listoffigures{\addtocontents{lof}{\protect\setcounter{tocdepth}{1}}}
\appto\listoftables{\addtocontents{lot}{\protect\setcounter{tocdepth}{1}}}
\theoremstyle{plain}
\newtheorem{theorem}{Theorem}[section]
\newtheorem{corollary}[theorem]{Corollary}
\newtheorem{lemma}[theorem]{Lemma}
\newtheorem*{theorem*}{Theorem}
\newtheorem*{corollary*}{Corollary}
\newtheorem{proposition}[theorem]{Proposition}
\newtheorem{definition}[theorem]{Definition}

\theoremstyle{remark}

\newtheorem{remark}[theorem]{Remark}

\newtheorem{example}[theorem]{Example}
\newtheorem*{example*}{Example}

\numberwithin{equation}{section}

\usepackage{array,rotating}  
\usepackage{color}
\usepackage{xcolor}
\definecolor{carmine}{rgb}{0.59, 0.0, 0.09}
\definecolor{mediumpersianblue}{rgb}{0.0, 0.4, 0.65}
\definecolor{persianplum}{rgb}{0.44, 0.11, 0.11}
\usepackage[colorlinks=true,
            linkcolor=persianplum, 
            urlcolor=olive,
            citecolor=mediumpersianblue, 
            backref=page]{hyperref}
\usepackage{amsmath}
\usepackage{amsfonts}
\usepackage{amssymb}
 
\urlstyle{same}


\newcommand{\cC}{\mathcal{C}}

\newcommand{\cG}{\mathcal{G}}

\newcommand{\cD}{\mathcal{D}}
\newcommand{\cL}{\mathcal{L}}

\newcommand{\cT}{\mathcal{T}}

\newcommand{\cI}{\mathcal{I}}

\newcommand{\ccA}{\mathcal{A}}



\newcommand{\scD}{\mathcal{D}}

\newcommand{\scK}{\mathcal{K}}

\newcommand{\scH}{\mathcal{H}}



\newcommand{\fg}{\mathfrak{g}}
\newcommand{\fp}{\mathfrak{p}}

\newcommand{\ri}{\operatorname{i}}

\newcommand{\R}{\mathbb{R}}
\newcommand{\RR}{\mathbb{R}}

\newcommand{\CC}{\mathbb{C}}
\newcommand{\AAA}{\mathbb{A}}

\newcommand{\PP}{\mathbb{P}}

\newcommand{\DD}{\mathbb{D}}
\newcommand{\VV}{\mathbb{V}}


\newcommand{\bh}{\mathbf{h}}

\newcommand{\bu}{\mathbf{u}}

\newcommand{\bC}{\mathbf{C}}

\newcommand{\bQ}{\mathbf{Q}}
\newcommand{\bR}{\mathbf{R}}

\newcommand{\bT}{\mathbf{T}}

\newcommand{\w}{{\,{\wedge}\;}}

\newcommand{\exd}{\mathrm{d}}

\newcommand{\ve}{\varepsilon}

\newcommand{\spn}{\operatorname{span}}

\newcommand{\half}{{\textstyle{\frac 12}}}

\newcommand{\biw}{\bigwedge\nolimits}
\newcommand{\what}{\widehat}
\newcommand{\wt}{\widetilde}


\newcommand{\cthetao}{\overline{\theta^1}}
\newcommand{\cthetat}{\overline{\theta^2}}
\newcommand{\cthetai}{\overline{\theta^i}}
\newcommand{\cthetaj}{\overline{\theta^j}}

\newcommand{\cscK}{\overline{\scK}}

\newcommand{\co}{\overline 1}
\newcommand{\ct}{\overline 2}
\newcommand{\ov}{\overline}

\renewcommand{\Re}{\operatorname{Re}}
\renewcommand{\Im}{\operatorname{Im}}

\makeatletter
\renewcommand*{\p@section}{\S\,}
\renewcommand*{\p@subsection}{\S\,}
\renewcommand*{\p@subsubsection}{\S\,}
 
\makeatother
\usepackage{float}


\fancyhead[CE]{ Makhmali and Sykes}
\fancyhead[CO]{Pre-K\"ahler structures and $k$-nondegeneracy} 
\fancyhead[RO,RE]{\thepage }

\begin{document}

\author{Omid Makhmali}\author{David Sykes}

 \address{\newline  Omid Makhmali\\\newline
Department of Mathematics and Natural Sciences, Cardinal Stefan Wyszy\'nski University, ul. Dewajtis 5,  Warszawa, 01-815, Poland \\\newline
\textit{Email address: }{\href{mailto:o.makhmali@uksw.edu.pl}{\texttt{o.makhmali@uksw.edu.pl}}}\\\newline
Department of Mathematics and Statistics, UiT The Arctic University of Norway, Troms\o\  90-37,Norway \\\newline\newline
David Sykes\\\newline
Institute for Basic Science, Center for Complex Geometry, Daejeon, 34126, Republic of Korea\\\newline
   \textit{Email address: }{\href{mailto:sykes@ibs.re.kr}{\texttt{sykes@ibs.re.kr}}}
 }

\title[]
{Pre-K\"ahler structures and finite-nondegeneracy}

\begin{abstract}
  Motivated by the geometry of Levi degenerate CR hypersurfaces, we define a \emph{pre-K\"ahler structure} on  a complex manifold as a  pre-symplectic structure compatible with the almost complex structure, i.e. a closed (1,1)-form.  Extending  \emph{Freeman filtration} to the pre-K\"ahler setting, we define  holomorphic degeneration and finite-nondegeneracy and show that the symmetry algebra of a real analytic pre-K\"ahler structure is finite-dimensional if and only if it is finitely nondegenerate.  Concurrently, we extend the classical correspondence between K\"ahler and Sasakian structures  to the pre-K\"ahler setting, i.e.  a one-to-one (local) correspondence between $k$-nondegenerate CR hypersurfaces equipped with a transverse infinitesimal symmetry and $k$-nondegenerate pre-K\"ahler structures.
We additionally formalize a second relationship between the categories, constructing a natural $k$-nondegenerate pre-K\"ahler structure defined on a line bundle over such CR structures via pre-symplectification.

Focusing on the lowest dimensional case, we solve the equivalence problem of non-K\"ahler pre-K\"ahler complex surfaces that are 2-nondegenerate by associating a Cartan geometry to them and explicitly express their local invariants in terms of the fifth jet of a potential function. We describe the vanishing of their basic invariants in terms of a double fibration, which gives a pre-K\"ahler characterization of the twistor bundle of symplectic connections on surfaces.

Lastly, we study the pre-K\"ahler complex surfaces arising as symmetry reductions of homogeneous 2-nondegenerate CR 5-manifolds, which leads to a  characterization of certain \emph{critical} symplectic connections on surfaces. We derive two fundamental results for such pre-K\"{a}hler manifolds:  Their moduli space of geometrically distinct structures contain $2$-dimensional open dense subsets, and they all have nontrivial infinitesimal symmetries. Finally, we show that all locally homogeneous pre-K\"{a}hler complex surfaces are locally flat.
\end{abstract}

 \subjclass{Primary: 32V05, 32Q15, 32L25; Secondary: 53D05, 53D10, 53C10, 53C12}
 \keywords{CR geometry, K\"ahler structures, pre-symplectic structures, $k$-nondegeneracy, symmetry reduction, symplectic connections}

\maketitle
  
\vspace{-1.1 cm}

\setcounter{tocdepth}{2} 
\tableofcontents

\newgeometry{marginratio=1:1,height=670pt,width=480pt,tmargin=75pt}

\section{Introduction}
\label{sec:introduction}

A natural complex geometric structure is induced on the space of integral curves of an infinitesimal symmetry on a hypersurface-type CR manifold. When the hypersurface-type CR structure with an infinitesimal symmetry is Levi nondegenerate, locally, such a leaf space is equipped with a unique (pseudo-)K\"ahler structure, which in turn encodes the original hypersurface-type CR structure and the choice of infinitesimal symmetry, establishing the well-known K\"ahler--Sasakian correspondence. The setting of interest in this article is Levi degenerate hypersurface-type CR structures equipped with an infinitesimal symmetry. In this setting, the locally defined leaf space, as a complex manifold, is equipped  with a compatible \emph{pre-symplectic} form $\omega$, that is, a real closed $1$-form of type $(1,1)$ with respect to the underlying complex structure. We refer to such pre-symplectic generalizations of K\"{a}hler geometries as \emph{pre-K\"ahler structures.}

These (possibly non-K\"{a}hler) structures on complex manifolds arise in several contexts aside from their relationship to CR geometry.  Recalling that the Ricci 2-form of a K\"ahler metric is a closed (1,1)-form, one can use local solvability of the complex Monge-Amp\`ere equation to show that every pre-K\"ahler structure is locally defined by the Ricci curvature of a K\"ahler metric. In fact, pre-K\"ahler structures encode the geometry of equivalence classes of K\"ahler metrics with the same Ricci curvature. 
 A pre-K\"{a}hler structure on a complex surface appears with a fundamental role in the recent work \cite{mok2023elliptic} on elliptic surfaces.  Furthermore, pre-K\"ahler structures encode the submanifold geometry of pseudo-K\"{a}hler manifolds, and naturally appear on twistor bundles for a variety of geometric structures. We present an example of this latter relationship, identifying a distinguished sub-class of pre-K\"{a}hler structures with twistor bundles for symplectic connections in dimension two.

While non-K\"{a}hler pre-K\"{a}hler geometries have not been studied to our knowledge, considerable theory has been developed for related Levi degenerate CR structures. So our first aim with this article is to highlight how the two categories are related, establishing basic general results for pre-K\"{a}hler geometry. We then specialize to study everywhere non-K\"{a}hler pre-K\"{a}hler complex surfaces, which is the lowest dimension where such structures have nontrivial local geometry.

\subsection{Outline and main results}\label{sec:outline-main-results}
We develop general fundamentals for pre-K\"{a}hler geometry in  \ref{sec: CR pre-Kahler corresp.}. In \ref{sec: terminology section}, we introduce preliminary terminology, defining pre-K\"{a}hler and pre-Sasakian structures, and establish that pre-K\"{a}hler structures coincide exactly with the induced geometries on complex submanifolds in pseudo-K\"{a}hler manifolds, Proposition \ref{prop: submanifold}. We also show a local one-to-one correspondence between pre-K\"ahler structures and the equivalence class of K\"ahler metrics with the same Ricci 2-forms, Proposition \ref{lem:pre-kahler-Kahler-Ricci-cohom}.

In \ref{sec: Straightenability and k-nondegeneracy} we introduce definitions of finite-nondegeneracy, $k$-nondegeneracy, and holomorphic degeneracy for pre-K\"{a}hler manifolds, structure invariants that are analogues of well-known invariants from CR geometry by the same name. We derive a simple criterion, \emph{total straightenability}, for when the complex structure on a pre-K\"{a}hler manifold canonically descends to a complex structure on the local leaf spaces of the foliation generated by the pre-K\"{a}hler form's kernel,  Lemma \ref{lemma: total straightenability lemma}. We show that finite-nondegeneracy at generic points is necessary and sufficient for a pre-K\"{a}hler manifold to not be locally decomposable into a product of some lower-dimensional pre-K\"{a}hler manifold and a complex manifold, Proposition \ref{prop: holomorphically degenerate and straightenability}.

In \ref{sec: Symmetry reduction and pre-Sasakian structures}, we establish a one-to-one correspondence between $2n$-dimensional pre-K\"{a}hler structures and $(2n+1)$-dimensional pre-Sasakian structures, Theorem \ref{thm: correspondence theorem}. Applying known results from CR geometry through this correspondence, in the theorem below we find a necessary and sufficient condition   for  finite-dimensionality of the symmetry algebra at generic points of a real analytic pre-K\"{a}hler manifold. 
\newtheorem*{thmA}{\bf Theorem \ref{finite-dimensionality theorem}}
\begin{thmA}
\emph{  A real analytic pre-K\"{a}hler structure admitting a Freeman filtration has finite-dimensional infinitesimal symmetry algebras if and only if it is $k$-nondegenerate.}
\end{thmA}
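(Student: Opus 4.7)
The strategy is to transfer the corresponding well-known result from CR geometry via the one-to-one local correspondence of Theorem \ref{thm: correspondence theorem}. Given a real analytic pre-K\"ahler manifold $(M,J,\omega)$, the correspondence produces a pre-Sasakian structure on a bundle $\tilde M\to M$ of dimension $\dim M+1$, which in turn realizes (locally) a real analytic Levi degenerate CR hypersurface $N$ together with a transverse infinitesimal CR symmetry $X$ whose integral curves have $M$ as local leaf space. The plan is then to show that (i) the infinitesimal symmetries of $(M,J,\omega)$ are in bijection with the real analytic CR symmetries of $N$ that commute with $X$, and (ii) the Freeman filtration / $k$-nondegeneracy condition on $(M,J,\omega)$ corresponds exactly to $k$-nondegeneracy of $N$ in the classical CR sense. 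Once these two facts are in place, the theorem follows from the classical result that finite-nondegeneracy of a real analytic CR hypersurface is equivalent to finite-dimensionality of its infinitesimal CR symmetry algebra.

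For step (i), I would note that any symmetry $Y$ of $(M,J,\omega)$ lifts uniquely to the pre-Sasakian bundle $\tilde M$ as a symmetry commuting with the pre-Sasakian Reeb-type vector field $X$, and conversely any $X$-commuting CR symmetry on $N$ descends to $M$ because it preserves the leaf space of $X$ and the induced $(J,\omega)$. For step (ii), I would invoke the construction of the Freeman filtration in \ref{sec: Straightenability and k-nondegeneracy}: the filtration on $M$ was defined precisely as the descent of the CR Freeman filtration on $N$ along the $X$-foliation, so holomorphic degeneracy and $k$-nondegeneracy translate verbatim. Real analyticity of $(M,J,\omega)$ transfers to real analyticity of $N$ since the pre-symplectification construction is algebraic in the structure data.

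The "if" direction is then immediate: if $(M,J,\omega)$ is $k$-nondegenerate at generic points, so is $N$, and its CR symmetry algebra is finite-dimensional by the classical theorem; the subalgebra commuting with $X$ is still finite-dimensional, hence so is the symmetry algebra of $(M,J,\omega)$. For the "only if" direction, suppose $(M,J,\omega)$ admits a Freeman filtration but is not $k$-nondegenerate; then $N$ is holomorphically nondegenerate (since the Freeman filtration exists) but Levi degenerate in a way that is not finitely nondegenerate, whence the classical theory produces an infinite-dimensional family of local CR symmetries of $N$, and averaging / restricting over the $X$-flow yields infinitely many independent $X$-commuting symmetries that descend to $M$.

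The main obstacle is ensuring that the symmetry count is genuinely preserved under the descent in the "only if" direction, i.e.\ that the infinite-dimensional CR symmetry space on $N$ is not exhausted by transformations that fail to commute with $X$. This requires care: one should produce the CR symmetries in a form that already intertwines with the $X$-action, e.g.\ by exploiting the flow-invariance of the local CR structure of $N$ (which follows from $X$ being an infinitesimal CR symmetry) to arrange, via an averaging or $X$-equivariant normal-form argument, that the obstructing infinite-dimensional family of symmetries can be chosen $X$-commuting. The rest of the argument is essentially bookkeeping through the correspondence.
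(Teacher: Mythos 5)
Your ``if'' direction follows the paper's route exactly: pass to the associated CR hypersurface, invoke the equivalence of the two notions of finite-nondegeneracy, apply the classical result (Stanton's theorem, that a real analytic hypersurface has infinite-dimensional holomorphic symmetry algebra iff it is holomorphically degenerate), and use the embedding of the pre-K\"ahler symmetry algebra into the CR one (Corollary \ref{equivalent embeddings corollary}). That half is fine.

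The ``only if'' direction, however, has a genuine gap that you yourself flag but do not close. You propose to produce an infinite-dimensional family of CR symmetries of $N$ from holomorphic degeneracy and then arrange, ``via an averaging or $X$-equivariant normal-form argument,'' that infinitely many of them commute with $X$ and descend to $M$. No such argument is supplied, and it is not routine: the flow of $X$ is non-compact, so averaging is not available, and there is no a priori reason the (infinite-dimensional) space of CR symmetries is not exhausted modulo the $X$-commuting ones. The paper avoids this entirely by never returning to the CR side in this direction. Instead it uses Proposition \ref{prop: holomorphically degenerate and straightenability}: if the structure admits a Freeman filtration but is not finitely nondegenerate, it is holomorphically degenerate, hence locally partially straightenable, i.e.\ locally a product $M^\prime\times U^\prime$ with $U^\prime\subset\mathbb{C}$ a complex factor lying in $\ker\omega$. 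Then every holomorphic $f\colon M^\prime\to\mathbb{C}$ gives an explicit symmetry $(p,z)\mapsto(p,z+f(p))$, exhibiting an infinite-dimensional symmetry algebra directly on $M$. You should adopt this direct construction (or else actually prove the $X$-equivariance claim). Separately, your parenthetical ``$N$ is holomorphically nondegenerate (since the Freeman filtration exists)'' is a slip: admitting a Freeman filtration is only a regularity hypothesis, and a structure that is not finitely nondegenerate under that hypothesis is precisely \emph{holomorphically degenerate}; as written your sentence contradicts the very theorem of Stanton you then invoke to get infinitely many symmetries.
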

In \ref{sec: presymplectification section}, we establish yet another natural relationship between pre-K\"{a}hler and pre-Sasakian structures canonically associating a $(2n+2)$-dimensional pre-K\"{a}hler structure to each point on a $(2n+1)$-dimensional pre-Sasakian manifold, Proposition \ref{prop: presymplectification prop}. Concluding \ref{sec: CR pre-Kahler corresp.}, in \ref{sec: Low order jets of finitely-nondegenerate potentials} we derive basic conditions that uniform $k$-nondegeneracy imposes on the $(k+1)$-jet of a pre-K\"{a}hler structure's potentials, Proposition \ref{prop: leading terms of k nond def eqn lemma} and Corollary \ref{corol: leading terms of k nond def eqn}.

In \ref{sec:4D-prekahler}, we present a thorough study of  non-(pseudo-)K\"{a}hler pre-K\"{a}hler structures with nonzero constant rank pre-symplectic form in dimension 4, the lowest dimension wherein such structures are nontrivial. In \ref{sec:structure-bundle}, we follow Cartan's method of equivalence to associate a canonical principal $\mathrm{U}(1)$-bundle of coframings adapted to such structures. In \ref{sec:cart-geom-description}, after recalling the definition of a Cartan geometry, we  give a solution of the local equivalence problem for $2$-nondegenerate pre-K\"ahler complex surfaces as a class of Cartan geometries in the following theorem.
\newtheorem*{thmB}{\bf Theorem \ref{thm:cart-geom-descr}}
\begin{thmB}
\emph{There is a canonical equivalence betweeen  2-nondegenerate pre-K\"ahler structures $(g,\omega)$ on a complex surface $M$ and Cartan geometries $(\cG\to M,\varphi)$ of type $(\RR^2\rtimes\mathrm{SL}(2,\RR) ,\mathrm{U}(1))$ whose curvature is given as \eqref{eq:Cartan-conn-curv} and \eqref{eq:cartan-curv-prekahler}   for some  $T_1,T_2\in C^{\infty}(\cG,\CC)$ and $T_3\in C^{\infty}(\cG,\ri\RR).$  The basic invariants for such Cartan geometries are  $\bT_1,\bT_2\in C^\infty(M,\RR)$  where}
  \begin{equation}\label{eq:fund-inv-prekahler4-intro}
\bT_1:=T_1\ov{T_1},\quad   \bT_2:=T_2\ov{T_2},
  \end{equation}
\emph{whose vanishing  characterizes locally flat pre-K\"ahler structures, i.e. $(g,\omega)$ is locally equivalent to the homogeneous space $G/\mathrm{U}(1)$ where $G=\RR^2\rtimes \mathrm{SL}(2,\RR).$}
\end{thmB}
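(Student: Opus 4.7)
The plan is to execute Cartan's method of equivalence on the canonical $\mathrm{U}(1)$-bundle $\cG\to M$ constructed in \ref{sec:structure-bundle}, and to identify the model space explicitly. First I would check that the flat homogeneous space $G/\mathrm{U}(1)$ with $G=\RR^2\rtimes \mathrm{SL}(2,\RR)$ carries a natural 2-nondegenerate pre-K\"ahler structure: the $\RR^2$-factor exponentiates to give the 4 ``translation'' directions tangent to $M$, the $\mathrm{SL}(2,\RR)$-factor acts by symplectic automorphisms preserving a compatible complex structure with a distinguished $\mathrm{U}(1)$-stabilizer, and the pre-symplectic form $\omega$ and degenerate metric $g$ are defined by left-invariant extensions of their values at the origin. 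A direct verification using the structure equations of $G$ shows this homogeneous structure is 2-nondegenerate and pre-K\"ahler in the sense of \ref{sec: terminology section}, and that its symmetry algebra is exactly $\fg = \mathrm{Lie}(G)$.

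Next I would show that the tautological $\fg$-valued $1$-form $\varphi$ on $\cG$ constructed from the adapted coframe of \ref{sec:structure-bundle}, together with the connection form of the $\mathrm{U}(1)$-reduction, satisfies the three defining properties of a Cartan connection: pointwise linear isomorphism $T_u\cG\to\fg$, $\mathrm{U}(1)$-equivariance for the restricted adjoint action, and reproduction of fundamental vector fields of the structure group. These follow from the construction of $\cG$ as a canonical reduction eliminating all coframe ambiguity beyond the residual $\mathrm{U}(1)$-action inherent to 2-nondegenerate complex surface pre-K\"ahler geometry. Differentiating $\varphi$ and reducing the structure equations modulo identities forced by $\d\omega=0$ and the closure and nondegeneracy conditions of \ref{sec: Straightenability and k-nondegeneracy} produces the curvature form $\d\varphi+\tfrac12[\varphi,\varphi]$, whose components reduce, after absorbing torsion in all admissible directions, to the essential torsion/curvature functions $T_1,T_2\in C^\infty(\cG,\CC)$ and $T_3\in C^\infty(\cG,\ri\RR)$ of \eqref{eq:Cartan-conn-curv}--\eqref{eq:cartan-curv-prekahler}; the reality/type constraints on $T_1,T_2,T_3$ come from comparing complex conjugate pieces of the curvature together with the $(1,1)$-type condition on $\omega$.

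To identify the basic invariants I would compute the right action of $\mathrm{U}(1)$ on $T_1,T_2,T_3$ along the fibers of $\cG\to M$. By the usual weight analysis, $T_1$ and $T_2$ carry nonzero $\mathrm{U}(1)$-weights so they are not themselves basic, but their squared moduli $\bT_1=T_1\ov{T_1}$ and $\bT_2=T_2\ov{T_2}$ are $\mathrm{U}(1)$-invariant and hence descend to well-defined real functions on $M$; the function $T_3$, being purely imaginary, transforms by an affine shift determined by the $\mathrm{U}(1)$-connection, so it is a relative invariant rather than a basic one. The converse direction --- recovering a 2-nondegenerate pre-K\"ahler structure from an abstract Cartan geometry of type $(G,\mathrm{U}(1))$ with curvature of the stated form --- follows by pushing $\omega$ and $g$, defined first on $\fg/\fu(1)$ by their values on the model, down to $M$ via the soldering components of $\varphi$; the curvature normalization guarantees $\d\omega=0$ and 2-nondegeneracy on the quotient.

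Finally, local flatness: vanishing of $\bT_1$ and $\bT_2$ is equivalent to $T_1=T_2=0$ on $\cG$. Substituting these into the Bianchi identities obtained by differentiating \eqref{eq:cartan-curv-prekahler} forces an algebraic-differential constraint on $T_3$; the key obstacle I expect is showing that this constraint implies $T_3=0$ identically, so that $\varphi$ becomes a Maurer--Cartan form for $G$ and $(M,g,\omega)$ is locally equivalent to $G/\mathrm{U}(1)$ by the Cartan--Darboux theorem. I anticipate this Bianchi step is the main obstruction, because the purely imaginary character of $T_3$ and its shift behavior under the $\mathrm{U}(1)$-action mean that the identity must be unpacked carefully to conclude $T_3\equiv 0$ rather than merely its image in some quotient. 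Once $T_3=0$ is obtained, the Cartan--Darboux theorem yields the desired local equivalence and completes the characterization of flatness in terms of \eqref{eq:fund-inv-prekahler4-intro}.
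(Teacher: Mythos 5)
Your overall strategy coincides with the paper's: construct the canonical $\mathrm{U}(1)$-bundle and absolute parallelism of \ref{sec:structure-bundle}, verify the three Cartan-connection axioms by construction, read off $T_1,T_2,T_3$ from the structure equations, recover $(g,\omega)$ from a section for the converse, and settle flatness through the Bianchi identities. However, the step you flag as the ``main obstruction'' --- deducing $T_3\equiv 0$ from $T_1\equiv T_2\equiv 0$ --- is left unexecuted in your proposal, and it is precisely the step the flatness claim turns on. It is in fact already resolved by the structure equations themselves: computing $\exd\psi$ from $\exd^2\theta^i=0$ yields \eqref{eq:dphi-streqs}, whose $\theta^1\wedge\ov{\theta^1}$ coefficient forces the identity $T_3=\tfrac{\ri}{2}(T_{2;\bar 1}-T_{1;2})$ of \eqref{eq:T3-Bianchi}. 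Since $\bT_i=T_i\ov{T_i}$, vanishing of $\bT_1,\bT_2$ gives $T_1\equiv T_2\equiv 0$ identically on $\cG$, hence all their coframe derivatives vanish, $T_3=0$ follows with no further analysis, $\exd\varphi+\varphi\wedge\varphi=0$, and the Cartan--Darboux argument you cite finishes the proof. There is no delicate unpacking to do, but your write-up anticipates a difficulty rather than supplying the identity that dissolves it.

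A second, related inaccuracy: you assert that $T_3$, ``being purely imaginary, transforms by an affine shift determined by the $\mathrm{U}(1)$-connection, so it is a relative invariant rather than a basic one.'' This is not correct. $T_3$ is a curvature coefficient, not a connection coefficient, so it transforms tensorially; since $\mathrm{U}(1)$ is abelian the entry $\Psi$ is adjoint-invariant, and a weight count ($T_1$ and $\theta^2$ both have weight $-2$, so $T_{1;2}$ has weight $0$; likewise $T_2$ and $\ov{\theta^1}$ both have weight $+1$, so $T_{2;\bar 1}$ has weight $0$) shows $T_3$ is genuinely $\mathrm{U}(1)$-invariant and descends to $M$. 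It is omitted from the list of basic invariants not because it fails to be invariant but because \eqref{eq:T3-Bianchi} exhibits it as a combination of first coframe derivatives of $T_1$ and $T_2$, so it carries no independent information. The rest of your outline (the Cartan-connection axioms, the converse via sections, the weight analysis identifying $T_1\ov{T_1}$ and $T_2\ov{T_2}$ as the basic invariants) matches the paper's argument.
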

A corollary of this theorem is that the pre-K\"{a}hler structure considered in \cite{mok2023elliptic} can be defined locally as the most symmetric $2$-nondegenerate pre-K\"{a}hler structure on complex surfaces (see Example \ref{exa:homog-surf}).

This  result should be contrasted with studies on $5$-dimensional uniformly $2$-nondegenerate CR hypersurface geometries, e.g. \cite{isaev2013reduction}, as such structures underly the pre-Sasakian structures associated to 2-nondegenerate pre-K\"{a}hler complex surfaces.  For experts familiar with Levi degenerate CR structures, it may be a surprise that our connection construction identifies a natural complement to the pre-K\"{a}hler form's kernel, as there is no such natural complement to Levi kernels in the general CR setting. The complement in our setting has a nice geometric description as the space spanned by $X$ and $\overline{X}$ for a complex vector field $X \in \Gamma( T^{1,0}M)$ such that
\begin{itemize}
\item With respect to the pre-K\"{a}hler metric $g,$ one can normalize $X$ so that
\[
g(\Re(X),\Re(X)) = g(\Im(X),\Im(X)) = \tfrac{1}{2}.
\]
\item There exists  $Y\in \Gamma( T^{1,0}M)$ which is in the kernel of the presymplectic form and satisfies $[Y,\overline{X}]\equiv X\pmod{\langle \overline{X},Y\rangle}$. 
\end{itemize}
Uniqueness and existence of such $\langle X,\overline{X}\rangle$ follows from calculations in \ref{sec:structure-bundle}. Of course, the metric condition is not available in the purely CR setting, which clarifies why the natural complement phenomenon only appears for pre-K\"{a}hler structures.

In \ref{sec:param-expr}, we  start with a choice of (local) potential function, express  an adapted coframing and  the basic invariants in some local complex coordinate system, showing, in particular, that the basic invariants whose vanishing imply flatness depend on the fifth jet of the potential function.
 
In \ref{sec:twist-bundle-characterization}, we interpret the vanishing of each of the basic invariants. In particular, after recalling the definition of a symplectic connection in dimension two and its twistor bundle as a complex surface, in the theorem below we show that these twistor bundles are indeed a distinguished class of pre-K\"ahler complex surfaces.
\newtheorem*{thmC}{\bf Theorem \ref{thm:twistor-bundle-characterization}}
\begin{thmC}
\emph{  There is a one-to-one correspondence between symplectic connections on surfaces and 2-nondegenerate pre-K\"ahler structures on complex surfaces satisfying $\bT_2=0.$}
\end{thmC}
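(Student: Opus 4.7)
The plan is to establish the correspondence in both directions and then verify that the two constructions are mutually inverse.

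For the forward direction, given a symplectic connection $\nabla$ on a real surface $(N,\Omega)$, I construct its twistor bundle $\pi\colon \cT \to N$ whose fiber $\cT_p$ is the space of $\Omega_p$-compatible linear complex structures on $T_pN$ (a copy of the upper half-plane), so that $\cT$ is a real $4$-manifold with a tautological fiberwise complex structure. The connection $\nabla$ supplies a horizontal distribution $\cH\subset T\cT$ isomorphic to $\pi^*TN$, and I define an almost complex structure $J$ on $\cT$ by the tautological complex structure on vertical vectors and by $J_q|_{\cH_q}=q$ for $q\in\cT_p$ on horizontal vectors. Torsion-freeness of $\nabla$ yields integrability of $J$ by a standard Nijenhuis calculation, making $\cT$ a complex surface. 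Setting $\omega:=\pi^*\Omega$, the form is real, closed, of type $(1,1)$ (since each $J_q$ is $\Omega$-compatible on $T_{\pi(q)}N$), and has $\ker\omega$ equal to the vertical distribution; hence $(\cT,J,\omega)$ is pre-K\"ahler. Uniform $2$-nondegeneracy follows by direct calculation on the adapted coframing from \ref{sec:structure-bundle}, and the basic invariant $\bT_2$ vanishes because the components of $T_2$ in \eqref{eq:cartan-curv-prekahler} correspond to curvature transverse to $\pi$, which vanish identically for a structure built from $\nabla$ by pullback and tautological construction.

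For the converse, start with $(M,g,\omega)$ a $2$-nondegenerate pre-K\"ahler complex surface with $\bT_2=0$. The kernel $\ker\omega$ is a rank-$2$ real distribution, and closedness of $\omega$ makes it involutive, so $M$ foliates locally over a real surface $N$ with an induced symplectic form $\Omega$ satisfying $\omega=\pi^*\Omega$. The nontrivial content lies in producing a symplectic connection on $N$, for which I use the Cartan geometry $(\cG\to M,\varphi)$ from Theorem \ref{thm:cart-geom-descr}. The hypothesis $\bT_2=0$ translates, via \eqref{eq:cartan-curv-prekahler}, into projectability of the natural complementary distribution $\langle X,\overline{X}\rangle$ (described after Theorem \ref{thm:cart-geom-descr}) to a well-defined $2$-plane field on an intermediate $\mathrm{U}(1)$-quotient of $\cG$ that fibers over $N$ with upper-half-plane fiber. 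This intermediate bundle is canonically identified with the twistor bundle of compatible complex structures on $TN$, and the residual coframe components on it descend to a torsion-free, $\Omega$-preserving connection $\nabla$ on $N$ whose horizontal lifts recover the chosen complement.

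To conclude, I verify the constructions are mutually inverse: starting from $\nabla$ and forming the twistor bundle $(\cT,J,\omega)$, the foliation by $\ker\omega$ recovers $N$ with its original $\Omega$, and the reconstructed connection matches $\nabla$ by tracing horizontal lifts; starting from $(M,g,\omega)$ with $\bT_2=0$, the twistor bundle of the reconstructed $\nabla$ is canonically identified with $M$ via the map sending each $m\in M$ to the $\Omega_{\pi(m)}$-compatible complex structure on $T_{\pi(m)}N$ determined by $J$ at $m$ restricted to the natural complement. The main obstacle is the precise identification of $\bT_2$ as the obstruction to projectability of $\langle X,\overline{X}\rangle$, which requires reading off from the explicit structure equations \eqref{eq:Cartan-conn-curv}--\eqref{eq:cartan-curv-prekahler} which coframe components become horizontal under the hypothesis, and matching them with the natural horizontal distribution on a twistor bundle in the forward direction. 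This reduces to a routine but lengthy computation once the adapted coframing is in place.
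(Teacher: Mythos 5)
Your proposal follows essentially the same route as the paper: both realize the twistor bundle as the $\mathrm{SO}(2,\RR)$-quotient of the bundle of $\sigma$-adapted coframes, endow it with the tautological complex structure and the pulled-back symplectic form, identify $\bT_2=0$ by matching the resulting coframing against the adapted coframe of Theorem \ref{thm:cart-geom-descr}, and obtain the converse by reversing the construction over the leaf space of $\ker\omega$. The only substantive difference is one of packaging: where you invoke a Nijenhuis computation for integrability and defer the vanishing of $T_2$ to a ``routine but lengthy computation,'' the paper verifies both simultaneously via the explicit coframe change $\theta^1=\omega^1+\ri\omega^2$, $\theta^2=\omega^1_1+\tfrac{\ri}{2}(\omega^1_2+\omega^2_1)$ (integrability of the corresponding Pfaffian system follows from the structure equations \eqref{eq:phi-equiaffine-cartan-conn}) together with the resulting identification \eqref{eq:psi-T1} of $T_1,T_2,T_3$ with the curvature components $R^i_j$.
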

In \ref{sec:remark-equi-surf-tubification}, we remark on the naturally defined symplectic connection on embedded equiaffine surfaces and briefly discuss a map from Levi nondegenerate tube-type CR hypersurfaces in $\mathbb{C}^3$ to a class of 2-nondegenerate CR hypersurfaces with an infinitesimal symmetry in $\mathbb{C}^3$. 
An interpretation of the condition $\bT_1=0$ becomes rather technical and we only briefly describe it in \ref{sec:remark-bt_1=0}.

In \ref{sec:symm-reduct-homog}, we study pre-K\"ahler structures defined by symmetry reductions of locally homogeneous 2-nondegenerate CR 5-manifolds. The symmetry reductions of a flat CR 5-manifold  is carried out in    \ref{sec:symm-reduct-flat} using a Cartan geometric approach. In \ref{sec:crit-spec-sympl}, we show that   symmetry reductions of the flat model define symplectic connections which are characterized in Proposition \ref{prop:Bochner-flat-pre-Kahler} in terms of the vanishing of an invariant trilinear form. Furthermore, after recalling the notion of criticality for symplectic connections, e.g. see \cite{Cahen1, Fox-affine}, in Proposition \ref{prop:crit-spec-sympl-1} we show that such symplectic connections are critical, exhibiting a pre-K\"{a}hler analogue to extremal K\"ahler metrics. In Remark \ref{rmk:sym-red-other-2nondeg-CR}, we highlight extensions of this result and the benefit of our  Cartan geometric approach in carrying out the symmetry reduction. In \ref{sec:pre-sasak-struct}, we describe symmetry reductions of non-flat homogeneous $2$-nondegenerate CR 5-manifolds, which were classified in \cite{FK-CR}. We  discuss their moduli space under two natural equivalence conditions. We describe spaces of germs of pre-Sasakian structures at a point in Proposition \ref{prop: germ equivalence class dimension}. Subsequently, we describe  spaces of geometrically distinct pre-K\"ahler structures, a coarser notion of equivalence,  discussed in Remark \ref{rem: coarse equivalence}, which are then used to find lower bounds on symmetry algebra dimensions.
\newtheorem*{thmD}{\bf Theorem \ref{thm: coarse moduli dim}}
\begin{thmD}
\emph{
The space of geometrically inequivalent pre-K\"{a}hler structures arising from any fixed $5$-dimensional $2$-nondegenerate locally homogeneous CR manifold contains a $2$-dimensional open dense subset. Each such pre-K\"{a}hler structure has nontrivial infinitesimal symmetries.
}
\end{thmD}
 Lastly, in \ref{sec:homog-pre-kahl} we conclude by showing that a  $2$-nondegenerate pre-K\"ahler complex surface is locally homogeneous if and only if it is flat.

\subsection{Conventions}
\label{sec:conventions}

In this article we will work locally over smooth  manifolds. Given a distribution $\scD\subset TM,$  its derived system is the distribution whose sheaf of sections is given by $\Gamma(\scD)+[\Gamma(\scD),\Gamma(\scD)]$ and, by abuse of notation, is denoted as $[\scD,\scD].$ Similarly, given two distributions $\scD_1$ and $\scD_2,$ we denote by $[\scD_1,\scD_2]$ the distribution whose sheaf of sections is   $\Gamma(\scD_1)+\Gamma(\scD_2)+[\Gamma(\scD_1),\Gamma(\scD_2)].$  We denote by $\Omega^k(M)$ and $\Omega^k(M,\fg)$ the sheaf of sections of  $\biw^k T^*M$ and $\biw^k T^*M\otimes\fg,$ respectively.   The symmetric product of  1-forms is denoted by their multiplication, e.g. for two 1-forms $\alpha$ and $\beta$ we define $\alpha\beta=\half(\alpha\otimes\beta+\beta\otimes\alpha)$ and $\alpha^k$ denotes the $k$th symmetric power of $\alpha.$ The span of vector fields  $v_1,\dots,v_k\in\Gamma(T M)$ over $C^\infty(M)$ is denoted by $\langle v_1,\dots,v_k\rangle$ and the algebraic ideal generated by 1-forms $\omega^0,\dots,\omega^n\in\Omega^1(M)$ is denoted by $\{\omega^0,\dots,\omega^n\}.$

  Given a principal bundle $\mu\colon\cC\to M,$ let $(\omega^i,\omega^i_j)$ be a coframe on $\cC$ such that $\omega^i$'s are semi-basic with respect to  $\mu\colon\cC\to M,$ i.e. $\omega^i(V)=0$ for all $V\in\ker\mu_*.$  We denote the frame associated to this coframe by $(\partial_{\omega^i},\partial_{\omega^i_j}).$ Moreover, given  a  function $f$ on $\cC,$ its \emph{coframe derivatives}  are defined by
  \[f_{;i}=\partial_{\omega^i}\im\exd f.\]
 We use the summation convention over repeated indices. Lastly, given a complex-valued     function (or 1-form)  $f=f_1+\ri f_2$ its  real and imaginary parts are denoted by $\Re f=f_1$ and $\Im f=f_2.$

\section{General theory for pre-K\"ahler and pre-Sasakian structures}\label{sec: CR pre-Kahler corresp.} 
In this section we define pre-K\"ahler and pre-Sasakian stuctures, as well as  the notions of  straightenability,  holomorphic degeneracy, Freeman filtration, and $k$-nondegeneracy. We show that pre-K\"ahler structures encode local submanifold geometry of pseudo-K\"ahler structures, and that, locally, they always arise this way. We formalize the pre-K\"ahler--Sasakian correspondence and show that, assuming real analyticity, the symmetry algebra of finitely-nondegenerate pre-K\"ahler structures is finite-dimensional. We also associate a pre-K\"ahler structure to any pre-Sasakian structure using a pre-symplectification construction, which extends the well-known metric cone construction in Sasakian geometry. We finish by characterizing the low order  jets of potential functions for uniformly finitely-nondegenerate pre-K\"ahler structures.

\subsection{Pre-K\"{a}hler, pre-Sasakian, and straightenable structures} \label{sec: terminology section}

(Pseudo-)K\"{a}hler structures on a complex manifold with almost complex structure $J$ are characterized by a real closed symplectic $(1,1)$-form $\omega$, which determines a (pseudo-)Riemannian metric
\begin{equation}\label{Kahler compatibility}
g(v,w)=\omega(v,Jw).
\end{equation}
This naturally generalizes by removing the assumption that $\omega$ be symplectic. In other words, using the term \emph{pre-symplectic}  as in \cite[Chapter 3, Section 7]{libermann2012symplectic}, we assume $\omega$ is a real pre-symplectic $(1,1)$-form. We call such structures \emph{pre-K\"{a}hler}.
\begin{definition}\label{pre-Kahler def}
A \emph{pre-K\"{a}hler structure} on a complex manifold $M$ with complex structure operator $J$ is a real closed differential form $\omega$ on $M$ of type $(1,1)$. The form $\omega$ is its \emph{pre-K\"{a}hler form}, and its \emph{associated (possibly degenerate) (pseudo-)Riemannian metric} is $g(v,w):=\omega(v,Jw)$. Its \emph{associated pre-Hermitian structure} is $h:=g-i\omega$.
\end{definition}

Complex submanifolds in pseudo-K\"{a}hler manifolds not only serve as natural examples of pre-K\"{a}hler manifolds, but every pre-K\"{a}hler structure arises locally in this way.

\begin{proposition}\label{prop: submanifold}
For any $2n$-dimensional pre-K\"{a}hler manifold $(M,\omega,J)$ and $p\in M$ at which the rank of $\omega$ is $2r$, there exists a neighborhood $U$ of $p$ that can be embedded as a complex submanifold into a $2(2n-r)$-dimensional pseudo-K\"{a}hler manifold such that $\omega$ coincides with the pullback of the pseudo-K\"{a}hler form along the embedding.
\end{proposition}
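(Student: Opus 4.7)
The plan is to construct the ambient pseudo-K\"ahler manifold as a product $\wt M=U\times\Delta$, where $U$ is a small neighborhood of $p$ in $M$ and $\Delta\subset\CC^{n-r}$ is a polydisc, and to prescribe a nondegenerate closed real $(1,1)$-form $\wt\omega$ on $\wt M$ via a local potential that couples the new fiber coordinates to the kernel of $\omega$ at $p$. The inclusion $\iota\colon U\hookrightarrow \wt M,\ q\mapsto(q,0)$ will then provide the required complex embedding.

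After shrinking $U$, the local $\partial\ov\partial$-lemma supplies a real potential, i.e.\ $\omega=\ri\,\partial\ov\partial\phi$ for some $\phi\in C^\infty(U,\RR)$. Since $\ker\omega_p\subset T_pM$ is $J$-invariant of complex dimension $n-r$, I choose holomorphic coordinates $(z^1,\dots,z^r,u^1,\dots,u^{n-r})$ centered at $p$ such that $\partial_{u^j}|_p$ span the $(1,0)$-part of $\ker\omega_p$; in these coordinates the Hermitian Hessian $(\phi_{\alpha\ov\beta}(p))$ has a nondegenerate $r\times r$ block $H_0:=(\phi_{z^a\ov{z^b}}(p))$ and vanishes elsewhere.

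Introducing holomorphic fiber coordinates $(w^1,\dots,w^{n-r})$ on $\Delta$, I define on $\wt M=U\times\Delta$ the potential
\[
\wt\phi\;:=\;\phi+\sum_{j=1}^{n-r}\bigl(\ov{w^j}\,u^j+w^j\,\ov{u^j}\bigr),\qquad \wt\omega\;:=\;\ri\,\partial\ov\partial\,\wt\phi,
\]
so that explicitly
\[
\wt\omega\;=\;\omega+\ri\sum_{j=1}^{n-r}\bigl(du^j\wedge d\ov{w^j}+dw^j\wedge d\ov{u^j}\bigr).
\]
The coupling term involves no $z$-differentials and its contribution vanishes on tangent vectors to $U\times\{0\}$ (where $dw=d\ov w=0$), so $\iota^*\wt\omega=\omega$ by construction.

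The main obstacle is verifying that $\wt\omega$ is symplectic on a neighborhood of $(p,0)$. Ordering variables as $(z,u,w)$, the Hermitian Hessian of $\wt\phi$ at $(p,0)$ takes the block form
\[
\begin{pmatrix} H_0 & 0 & 0 \\ 0 & 0 & I_{n-r} \\ 0 & I_{n-r} & 0 \end{pmatrix},
\]
whose determinant equals $(-1)^{n-r}\det H_0\neq 0$. Hence $\wt\omega$ is nondegenerate at $(p,0)$ and, by continuity, on an entire neighborhood; after shrinking $\wt M$ this yields the desired pseudo-K\"ahler embedding, with $\dim_{\CC}\wt M=n+(n-r)=2n-r$ matching the statement.
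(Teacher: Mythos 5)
Your proof is correct and follows essentially the same route as the paper: extend to $U\times\mathbb{C}^{n-r}$ and add the standard pairing $\ri\sum_j(du^j\wedge d\ov{w^j}+dw^j\wedge d\ov{u^j})$ between the kernel directions and the new fiber coordinates, then conclude nondegeneracy near $(p,0)$ from the block form of the Hermitian matrix at that point. The only difference is that you package the correction term through a potential via the $\partial\ov\partial$-lemma, whereas the paper simply adds the explicit closed $(1,1)$-form $\sum_j\Re(\ri\,dz^{r+j}\wedge\ov{dZ^j})$ directly; this is a cosmetic, not substantive, distinction.
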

\begin{proof}
Let $U^\prime\ni p$ be a neighborhood of $p$ identified with local complex coordinates $(z^1,\ldots, z^n)$ as $U^\prime\subset \mathbb{C}^n$ such that $p=(0,\ldots,0)$ and $\frac{\partial}{\partial z^j}$ are in the kernel of $\omega$ at $0$ for all $j>r$. Since $\omega$ is a rank $2r$ type $(1,1)$ form,  at $0$ it takes the form $\left.\omega\right|_0=\left.\operatorname{i}\sum_{j,k=1}^rH_{j\bar{k}}\exd z^j\wedge \overline{\exd z^k}\right|_0$ for some nondegenerate $r\times r$ Hermitian matrix $(H_{j\bar{k}})$. Hence the pre-symplectic $(1,1)$-form $\widetilde{\omega}=\omega+\sum_{j=1}^{n-r}\Re(\operatorname{i}\exd z^{r+j}\wedge \overline{\exd Z^j})$ on $U\times \mathbb{C}^{n-r}$ expressed using coordinates $(Z^1,\ldots, Z^{n-r})\in \mathbb{C}^{n-r}$ is symplectic in some neighborhood $U\times \mathbb{C}^{n-r}\subset U^\prime\times \mathbb{C}^{n-r}$ of $0$ because its value at $0$ is nondegenerate. Along the embedding $z\mapsto (z,0)$ of $U$ into $U\times \mathbb{C}^{n-r}$, the form $\widetilde{\omega}$ pulls back to $\omega$.
\end{proof}

Many properties of (pseudo-)K\"{a}hler structures promote to the pre-K\"{a}hler setting as they are not inherently related to K\"{a}hler forms' nondegeneracy. The form $\omega$ being type $(1,1)$ is equivalent to the compatibility condition $\omega(\cdot,\cdot)=\omega(J\cdot, J\cdot)$, and its being closed avails us of standard $\partial\bar\partial$ lemmas leading to local expressions of $\omega$ in terms of potentials, described further below. Of course we must now allow the associated metric \eqref{Kahler compatibility} to be degenerate, as there is the identity for kernels $\operatorname{ker} g=\operatorname{ker}\omega$, and thus many techniques from Riemannian geometry become unavailable.

A well-known class of closed (1,1)-forms are  the Ricci 2-form of (pseudo-)K\"ahler metrics. Indeed, one can show that  any pre-K\"ahler structure locally arises as the Ricci 2-form of some (pseudo-)K\"ahler metric. 
To show this, recall the local $\overline{\partial}$-Poincar\'{e} Lemma (and more directly, its $\partial\overline{\partial}$ Lemma corollary \cite{moroianu2007lectures}), stating that for a closed $(1,1)$-form $\omega$ on $M$ and any $p\in M$, there is a neighborhood $U\subset M$ of $p$ on which $\omega$ is $\partial\overline{\partial}$ exact, meaning there exists a function $\rho:U\to \mathbb{R}$ such that 
\begin{equation}\label{potential identity}
\partial\overline{\partial}\rho =\frac{\operatorname{i}}{2}\left.\omega\right|_{U}.
\end{equation}
When $(\omega,J)$ is K\"{a}hler, such $\rho$ are called \emph{potentials} of $\left.\omega\right|_{U}$, and we extend the same terminology to the pre-K\"{a}hler setting. 
  \begin{proposition}\label{lem:pre-kahler-Kahler-Ricci-cohom}
    There is a local one-to-one correspondence between pre-K\"ahler structures and equivalence classes of K\"ahler metrics with the same Ricci 2-form.
  \end{proposition}
  \begin{proof}
Let $\omega$ be the closed (1,1)-form defining a pre-K\"{a}hler structure on $M$. 
    Requiring that $\omega$ be the Ricci 2-form of a K\"ahler metric $g=\partial\ov\partial\varphi,$ for some potential function $\varphi,$ i.e. $g_{ij}=\partial_i\ov\partial_j\varphi,$ amounts to the local solvability of the equation
\[
-\ri\partial\ov\partial\log\det(\partial\ov\partial \varphi)=\omega,
\]
which is equivalent to $-\ri\partial\ov\partial\log\det(g)=2\ri\partial\ov\partial \rho$ and  implies $ \partial\ov\partial(\log\det(g)+2\rho)=0.$
    It follows that $\log\det(g)=-2\rho+ f,$ for a pluriharmonic function $f$ on $M.$ Thus, the desired K\"ahler potential $\varphi$ satisfies the Monge-Amp\`ere equation $\det(\partial\ov\partial\varphi)=e^{-2\rho+f}.$
    Assuming real analyticity and an appropriate noncharacteristic Cauchy data, or that $g$ is positive definite, such Monge–Amp\`ere equations are locally solvable.
  \end{proof}
Accordingly, pre-K\"{a}hler structures correspond to equivalence classes of K\"{a}hler metrics, where equivalence between $g$ and $\wt{g}$ is given by existence of a pluriharmonic $f$ satisfying
\[
\det \wt g=e^f\det g.
\]

Throughout this article we additionally assume that the rank of $\omega$ is constant, a property that holds in general almost everywhere. This assumption is even sometimes given within the definition of pre-symplectic, e.g. \cite{grabowska2023reductions} and \cite[Chapter 3, Remark 7.3]{libermann2012symplectic}, and it allows us to apply the following lemma.

\begin{lemma}\label{lemma: integrable kernel lemma}
The kernel of a constant rank closed $(1,1)$ form $\omega$ on a pre-K\"{a}hler manifold $(M,\omega,J)$ is integrable and locally generates a foliation by complex submanifolds. 
 \end{lemma}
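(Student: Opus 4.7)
The plan is to establish the two assertions separately: first, that $\ker\omega$ is a Frobenius-integrable real subbundle of $TM$; second, that the resulting real leaves are $J$-invariant and therefore inherit the structure of complex submanifolds of $(M,J)$. The constant-rank hypothesis already guarantees that $\ker\omega$ is a smooth subbundle, so only involutivity requires verification for the first part.

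For involutivity, I would apply the closedness of $\omega$ in the standard formula for $\exd\omega$ on vector fields. Taking $X,Y\in\Gamma(\ker\omega)$ and an arbitrary $Z\in\Gamma(TM)$,
\[
0=\exd\omega(X,Y,Z)=X\omega(Y,Z)-Y\omega(X,Z)+Z\omega(X,Y)-\omega([X,Y],Z)+\omega([X,Z],Y)-\omega([Y,Z],X).
\]
Every term in which $\omega$ is paired with $X$ or $Y$ vanishes by hypothesis, so the identity reduces to $\omega([X,Y],Z)=0$ for all $Z$. Thus $[X,Y]\in\Gamma(\ker\omega)$, and the Frobenius theorem produces the desired real foliation.

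For the complex structure on the leaves, I would first exploit the $(1,1)$-type condition $\omega(\cdot,\cdot)=\omega(J\cdot,J\cdot)$ to establish $J$-invariance of $\ker\omega$: if $v\in\ker\omega$ and $w\in TM$, then $\omega(Jv,w)=\omega(J^2v,Jw)=-\omega(v,Jw)=0$, so $Jv\in\ker\omega$. Consequently, each real leaf $L$ has a $J$-invariant tangent bundle, and the restriction $J|_L$ defines an almost complex structure on $L$. Its Nijenhuis tensor is the restriction of the ambient Nijenhuis tensor, which vanishes since $J$ is integrable on $M$. Newlander--Nirenberg then makes $L$ a complex manifold, and because the inclusion $L\hookrightarrow M$ intertwines $J|_L$ with $J$ by construction, $L$ becomes a complex submanifold of $(M,J)$.

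I do not foresee a real obstacle here, as each step is a direct application of standard identities. The only slightly delicate point is the final appeal to Newlander--Nirenberg on individual leaves; this could alternatively be bypassed by the holomorphic Frobenius theorem, since real involutivity together with $J$-invariance implies involutivity of the $(0,1)$-component of the complexified distribution inside $T^{0,1}M$, which is enough to straighten the foliation and the complex structure simultaneously.
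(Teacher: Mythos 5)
Your proof is correct and follows essentially the same route as the paper: involutivity from $\exd\omega=0$ applied to kernel fields, and $J$-invariance from the $(1,1)$ compatibility $\omega(\cdot,\cdot)=\omega(J\cdot,J\cdot)$. The paper states this more tersely and leaves implicit the final point (that $J$-invariant leaves of an integrable complex structure are complex submanifolds), which you justify explicitly via Newlander--Nirenberg.
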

 \begin{proof}
For two vector fields $X_1,X_2$ in the kernel of $\omega$, applying $X_j \im\omega = 0$ a few times with $\exd \omega = 0$ yields $0 = \exd \omega(X_1,X_2,Y) = -\omega([X_1,X_2],Y)$ for all vector fields $Y$, and hence the kernel is Frobenius integrable. It is invariant under $J$ because $\omega(\cdot,\cdot) = \omega(J\cdot,J\cdot)$.
\end{proof}

In sufficiently small neighborhoods of any point on $M$, the foliation's  leaf space has a canonical smooth manifold structure. Understanding when the pre-K\"{a}hler structure on $M$ descends to a pre-K\"{a}hler structure on this leaf space is essential in our study, which motivates the following definitions.

\begin{definition}[straightenable]\label{partially straightenable}
A pre-K\"{a}hler manifold $(M,\omega,J)$ is \emph{partially straightenable} around a point $p\in M$ if there is 
\begin{itemize}
\item a neighborhood $U\subset M$  of $p$,  
\item a pre-K\"{a}hler manifold $(U^\prime,\omega^\prime, J^\prime)$ with $\dim(U^\prime)<\dim(M)$, and
\item a complex manifold $M^\prime$
\end{itemize}
 such that $(U,\omega,J)$  is equivalent the complex manifold $U^\prime\times M^\prime$ with pre-K\"{a}hler form defined by trivially extending $\omega^\prime$ to $T(U^\prime\times M^\prime)$, that is,  setting $X\im\omega=0$ for all $X\in \{0\}\times \Gamma(TM^\prime)$.
The structure on $(M,\omega,J)$ is \emph{totally straightenable} around a point $p\in M$, if additionally $(U^\prime,\omega^\prime, J^\prime)$ is (pseudo-)K\"{a}hler.
The structure on $(M,\omega,J)$ is \emph{non-straightenable} if it is not partially straightenable.
\end{definition}
This terminology alludes to an analogous concept of biholomorphic straightening developed for CR geometry in \cite{freeman1977local}.
\begin{example}[Non-straightenable]\label{ex: light cone potential}
  Consider the functions
  \begin{subequations}\label{eq:rho-exa}
    \begin{gather} 
        \rho_a(z)=2(\Re(z^1)+1)^a(\Re(z^2)+1)^{1-a},\label{rho-Doubrov-example}\\
        \rho(z)=\frac{|z^1|^2+\Re((z^1)^2\overline{z^2})}{1-|z^2|^2},\label{rho-flat-example} 
          \end{gather}
         \end{subequations}
  on $M:=\{z=(z^1,z^2)\in\mathbb{C}^2\,|\,|z^2|<1,\,\Re(z^1)|<1\}$, and the pre-symplectic forms 
\begin{equation}\label{flat preKahler form example}
\begin{aligned}
\omega_a = \frac{\operatorname{i}}{2}\partial \overline{\partial}\rho_a
\quad\mbox{ and }\quad
 \omega=\frac{\operatorname{i}}{2}\partial \overline{\partial}\rho
\end{aligned}
\end{equation}
on $M$ generated from $\rho$ by applying the Dolbeault operators
\[
\partial=\sum \frac{\partial}{\partial z^j}\exd z^j
\quad\mbox{ and }\quad
\overline{\partial}=\sum \frac{\partial}{\partial \overline{z^j}}\exd \overline{z^j}.
\]
These structures are not K\"{a}hler, as one can compute
\[
\det
\left(
\begin{array}{cc}
\omega_a\left( \partial_{z^1},\partial_{\overline{z^1}}\right) & \omega_a\left( \partial_{z^1},\partial_{\overline{z^2}}\right)\\
\omega_a\left( \partial_{z^2},\partial_{\overline{z^1}}\right) & \omega_a\left( \partial_{z^2},\partial_{\overline{z^2}}\right)
\end{array}
\right)
=
\det
\left(
\begin{array}{cc}
\omega\left( \partial_{z^1},\partial_{\overline{z^1}}\right) & \omega\left( \partial_{z^1},\partial_{\overline{z^2}}\right)\\
\omega\left( \partial_{z^2},\partial_{\overline{z^1}}\right) & \omega\left( \partial_{z^2},\partial_{\overline{z^2}}\right)
\end{array}
\right)
=0.
\]
The ranks of $\omega_a$ and $\omega$ are 1 everywhere, and they define pre-K\"{a}ler structures as they are real pre-symplectic $(1,1)$ forms by construction. It turns out that all of these structures (except for the $a=0$ and $a=1$ cases) are (locally) non-straightenable everywhere, a fact easily established using properties developed in the sequel, specifically Proposition \ref{prop: holomorphically degenerate and straightenability}. 

This example has fundamental connections to CR geometry of the hypersurfaces $\{(w,z)\in\mathbb{C}\oplus\mathbb{C}^2 \,|\,\Re(w)=\rho_a(z)\}$, a relationship that is formalized in Theorem \ref{thm: correspondence theorem}. In particular, the hypersuface $\Re(w)=\rho(z)$ has been extensively studied in CR geometry, and it is interesting to note that while this hypersurface is locally equivalent to $\Re(w)=\rho_{1/2}(z)$ as CR manifolds, the pre-K\"{a}hler structures of $\rho$ and $\rho_{1/2}$ differ.  The formula for $\rho_a$  comes from the classification in \cite{DKR-affine} of affinely homogeneous surfaces in $\mathbb{R}^3$, as $\{\Re(w)=\rho_a\}\cap\mathbb{R}^3$  is a special class of such surfaces. We continue this example in Section \ref{sec:param-expr} (Examples \ref{exa:homog-surf-flat}, \ref{exa:homog-surf}, \ref{exa:special-homog-equiaaffine}).
\end{example}

We are going to encounter degenerate generalizations of several structures that commonly appear in K\"{a}hler geometry. For consistency and to clearly emphasize the parallels to established K\"{a}hler geometry, we refer to them all using a \emph{pre-} prefix. In particular we will encounter \emph{pre-contact}  and \emph{pre-Sasakian} structures.

\begin{definition}
Given a CR manifold $M$ with CR distribution $D\subset TM$ and almost complex structure operator $J:D\to D$, a \emph{(local) CR symmetry} is a (local) diffeomorphism whose differential preserves $D$ and commutes with $J$ (or, equivalently, whose differentials preserve the $\ri$-eigenspace bundle $T^{1,0}M=\{v\in\mathbb{C}D\,:\, Jv=\operatorname{i}v\}$ of $J$). An \emph{infinitesimal CR symmetry} is a vector field $X$ on an open subset of $M$ for which the sheaf of sections of $T^{1,0}M$ is invariant under $\operatorname{ad}_{X}$. When $M$ is hypersurface-type, we call $X$ \emph{transverse} if it is transverse to $D$ at every point.
\end{definition}
\begin{definition}[Pre-contact and pre-Sasakian]\label{def: Pre-contact and Pre-Sasakian}
A \emph{pre-contact structure} is a codimension 1 distribution on an odd-dimensional manifold. A \emph{pre-Sasakian structure} is a hypersurface-type CR structure together with a transverse  infinitesimal symmetry.
\end{definition}

\begin{remark}
The CR distribution on hypersurface-type CR structures is pre-contact in general, and moreover contact if and only if it is Levi nondegenerate.
A different definition of pre-Sasakian structure on $3$-manifolds is formulated in \cite{belgun2012metric}, so we stress that these are indeed not equivalent.
\end{remark}
We often restrict to considering only the generic case of pre-K\"{a}hler structures with constant rank pre-K\"{a}hler forms, all related pre-contact structures in this case will be of constant class in the sense of \cite[Chapter 5.3.2]{alekseevskij1991geometry} and are therefore locally described by Darboux's theorem for codimension 1 distributions. Similarly, the underlying CR structures of all pre-Sasakian structures that we encounter in this case will have constant rank Levi forms.

\subsection{Straightenability and $k$-nondegeneracy}\label{sec: Straightenability and k-nondegeneracy}
Straightenability is an important property to detect, as it allows descriptions of the local geometry to be reduced to lower dimensional non-straightenable cases, and, as we will show in \ref{sec: Symmetry reduction and pre-Sasakian structures}, it has an important role in describing which pre-K\"{a}hler structures have finite-dimensional symmetry algebras.  Characterizing non-straightenability gives rise to an integer-valued fundamental invariant \emph{the nondegeneracy order} of a pre-K\"ahler structure, which we describe in this section. The invariant and its definition are directly analogous to concepts developed for \emph{biholomorphic straightening} of CR structures in \cite{freeman1977local}.   Structures with nondegeneracy order $k$ will be termed $k$-nondegenerate, alluding to analogous invariants of CR structures.

In this paper, we develop only the notion of \emph{uniform} $k$-nondegeneracy (Definition \ref{def: nondegeneracy order}),
which is well-defined under regularity assumptions we will soon adopt, described by certain natural filtrations of the tangent bundles consisting of constant rank vector distributions. These regularity assumptions are similar to the setting of  \cite{freeman1977local}. There are more general concepts of $k$-nondegeneracy in CR geometry \cite[Section 11]{baouendi1999cr}, and similar formulations can be made for the pre-K\"{a}hler setting, which we outline in Remark \ref{k-nond. through the correspondence}. It is beyond our present aim, however, to discuss such generalizations in detail as it requires digression we prefer to avoid. The regularity assumptions we will soon adopt hold at almost every point on a general pre-K\"{a}hler manifold, and at points where they hold the two notions (\emph{uniform} $k$-nondegeneracy and the most general possible definition) coincide, which is an immediate consequence of \cite[Corollary 11.2.14]{baouendi1999cr}, Theorem \ref{thm: correspondence theorem}, and Proposition \ref{prop: leading terms of k nond def eqn lemma}.

The first of these \emph{regularity assumptions}  is that $\omega$ has constant rank, so its kernel is an integrable distribution in $TM$ by Lemma \ref{lemma: integrable kernel lemma}. Considering the foliation of the pre-K\"{a}hler manifold $M$ that the kernel generates, in a sufficiently small neighborhood of a point $p\in M,$ the leaf space of this foliation, $N,$  has a natural smooth manifold structure for which the quotient map from $M$ to $N$ is a smooth submersion. The germ of such $N$ near the leaf through $p$ is well-defined, and we refer to any such $N$ as the \emph{local leaf space at $p$} of the pre-K\"{a}hler structure (emphasizing uniqueness up to a natural local equivalence). The complex structure on $M$ does not necessarily descend to a complex structure on $N$, which is the essential obstruction to straightenability. At a point $p\in M$, we can consider $\operatorname{i}$ and $-\operatorname{i}$ eigenspaces $T^{1,0}_pM$ and $T^{0,1}_pM$  of the complex structure operator, and their image under the differential $q_*$ defines a splitting of $\mathbb{C}T_{q(p)}N$ into transverse subspaces related through conjugation by $\overline{q_*\left(T^{1,0}_pM\right)}=q_*\left(T^{0,1}_pM\right)$. For any smooth section $\sigma:N\to M$ of $q:M\to N$, the distributions $\Delta^{1,0}$ and $\Delta^{0,1}$ with fibers
\begin{equation}\label{complex structure eigenspaces}
\Delta^{1,0}_p=q_*\left(T^{1,0}_{\sigma(p)}M\right)
\quad\mbox{ and }\quad
\Delta^{0,1}_p=q_*\left(T^{0,1}_{\sigma(p)}M\right)
\quad\quad\forall\, p\in N,
\end{equation}
determine an almost complex structure on $N$, by taking these distributions to be the almost complex structure operator's $\operatorname{i}$ and $-\operatorname{i}$ eigenspaces, respectively. The distributions depend on $\sigma$ in general.

\begin{remark} For every section $\sigma:N\to M$ with $\sigma(N)\subset M$ a complex submanifold, the pull-backs $\sigma^*\omega$ and $\sigma^*J$ determine a (pseudo-)K\"{a}hler structure on $N$, and it is in this sense that \emph{pre-K\"{a}hler} is a fitting moniker for our general structures.
\end{remark}

Throughout the sequel, we label the (complexified) pre-symplectic kernel 
\begin{equation}\label{pre-symplectic kernel}
\mathcal{K}\oplus\overline{\mathcal{K}}:=\{v\in \mathbb{C}T_pM\,|\, v\im\omega_p=0,\,\forall\,p\in M\},
\end{equation}
reflecting the decomposition of this distribution into holomorphic $\mathcal{K}\subset T^{1,0}M$ and antiholomorphic $\overline{\mathcal{K}}\subset T^{0,1}M$ parts.

The almost complex structure on $N$ defined by \eqref{complex structure eigenspaces} is independent of $\sigma$ if and only if 
\begin{equation}\label{holomorphic and antiholomorphic bundles}
T^{1,0}M/q_*^{-1}(0)=\left(T^{1,0}M+\overline{\mathcal{K}}\right)/\left(\mathcal{K}\oplus\overline{\mathcal{K}}\right)
 \quad\mbox{ and }\quad
T^{0,1}M/q_*^{-1}(0)=\left(T^{0,1}M+\mathcal{K}\right)/\left(\mathcal{K}\oplus\overline{\mathcal{K}}\right)
\end{equation}
 are invariant under flows of vector fields in the kernel \eqref{pre-symplectic kernel}.   The infinitesimal expression of such invariance -- posed in terms of Lie brackets -- naturally leads us to consider the $\mathbb{C}$-linear map $v\mapsto \mathrm{ad}_v$ for each $p\in M$ from the fiber $\mathcal{K}_p$ to the space of antilinear endomorphisms on $T^{1,0}_pM/\mathcal{K}_p$ by
\begin{equation}\label{first KAO}
\mathrm{ad}_v(x+\mathcal{K}_p):=[V,\overline{X}]  \pmod{\mathcal{K}_p \oplus T^{0,1}_pM}
\quad\quad\forall\, v\in \mathcal{K}_p, x\in T^{1,0}_pM/\mathcal{K}_p,
\end{equation}
where $V\in \Gamma(\mathcal{K})$ and $X\in \Gamma(T^{1,0}M)$ are any vector fields satisfying $V(p)=v$ and $X(p)\equiv x \pmod{\mathcal{K}_p}$. It is straightforward to check that this definition does not depend on the choice of vector fields $X$ and $V$ extending $x$ and $v$.

\begin{lemma}\label{lemma: total straightenability lemma}
The induced almost complex structure on $N$ defined by \eqref{complex structure eigenspaces} is independent of the section $\sigma$ if and only if $M$ is locally totally straightenable, in which case $N$ caries a canonical (pseudo-)K\"{a}hler structure. Equivalently,  $M$ is locally totally straightenable if and only if $\mathrm{ad}_v=0$ for all $v\in \mathcal{K}$.
\end{lemma}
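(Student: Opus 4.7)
The plan is to establish the three-way equivalence of the following conditions: (A) the induced almost complex structure on $N$ is independent of the section $\sigma$; (B) $M$ is locally totally straightenable; and (C) $\mathrm{ad}_v=0$ for all $v\in\mathcal{K}$. I would handle (A)$\Leftrightarrow$(C) by Lie bracket computations, (C)$\Rightarrow$(B) by the complex Frobenius theorem, and (B)$\Rightarrow$(A) by direct inspection of a product; the canonical (pseudo-)K\"ahler structure on $N$ will fall out of the product decomposition.

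For (A)$\Leftrightarrow$(C), I would first observe that independence of $\sigma$ is equivalent to the bundles in \eqref{holomorphic and antiholomorphic bundles} being invariant under flows of vector fields tangent to the real kernel $\mathcal{K}\oplus\overline{\mathcal{K}}$, which infinitesimally becomes $[V,X]\in\Gamma(T^{1,0}M+\overline{\mathcal{K}})$ for every $V\in\Gamma(\mathcal{K}\oplus\overline{\mathcal{K}})$ and $X\in\Gamma(T^{1,0}M+\overline{\mathcal{K}})$. The cases where both arguments lie in $T^{1,0}M$ or both in $\overline{\mathcal{K}}$ are automatic from integrability of $T^{1,0}M$ and of the kernel (Lemma \ref{lemma: integrable kernel lemma}). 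The single nontrivial case, $V\in\Gamma(\overline{\mathcal{K}})$ and $X\in\Gamma(T^{1,0}M)$, becomes upon complex conjugation the condition $[V,\overline{X}]\in\Gamma(\mathcal{K}\oplus T^{0,1}M)$ for $V\in\Gamma(\mathcal{K})$ and $X\in\Gamma(T^{1,0}M)$, which is precisely $\mathrm{ad}_v=0$.

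For (C)$\Rightarrow$(B), the central step is to apply the complex Frobenius theorem of Newlander--Nirenberg to $L:=T^{0,1}M+\mathcal{K}$. Involutivity of $L$ follows from involutivity of $T^{0,1}M$, of $\mathcal{K}$ (as the intersection of $T^{1,0}M$ with the integrable kernel), and condition (C), which supplies the mixed brackets. Since $L+\overline{L}=\mathbb{C}TM$ and $L\cap\overline{L}=\mathcal{K}\oplus\overline{\mathcal{K}}$ is the complexification of the real integrable kernel, the theorem yields local holomorphic coordinates identifying a neighborhood with a product $U'\times M'$ of complex manifolds, in which the leaves of the kernel foliation are the slices $\{pt\}\times M'$. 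The quotient bundle $L/(L\cap\overline{L})$ descends to the $(0,1)$-subbundle of an integrable almost complex structure on $U'$. That $\omega$ is basic follows from $\omega(V,\cdot)=0$ and $\mathcal{L}_V\omega=V\im\exd\omega+\exd(V\im\omega)=0$ for $V$ in the kernel, so $\omega$ descends to a closed $(1,1)$-form $\omega'$ on $U'$; since $\ker\omega$ equals the kernel of the projection, $\omega'$ is nondegenerate, giving $(U',\omega',J')$ a canonical (pseudo-)K\"ahler structure, and $\omega$ is its trivial extension as in Definition \ref{partially straightenable}. Finally, (B)$\Rightarrow$(A) is immediate: in a product $U'\times M'$, any section $\sigma\colon U'\to U'\times M'$ produces the same $\Delta^{1,0}$ on $U'$, namely the $(1,0)$-bundle of the complex structure on the first factor.

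The main technical obstacle will be the application of the complex Frobenius theorem to the subbundle $L$ with $L\cap\overline{L}\neq 0$, together with the verification that the resulting coordinate system is genuinely a \emph{holomorphic} product decomposition (not merely a smooth one) and that the complex structures on $U'$ and $M'$ piece together to recover $J$ on $M$ -- this uses that both induced structures come from the single involutive distribution $T^{1,0}M$ and that the section-independent complex structure on $U'$ constructed in the (A)$\Leftrightarrow$(C) step coincides with the one produced by the Frobenius coordinates.
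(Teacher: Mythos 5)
Your proof is correct, and it reaches the same destination by the same two main tools as the paper (the complex Frobenius theorem of Nirenberg and the descent of $\omega$ via $\mathcal{L}_X\omega=\exd(X\im\omega)+X\im\exd\omega=0$), but the way you deploy the Frobenius theorem is genuinely different. The paper applies it only to the pair $\mathcal{K}$, $\mathcal{K}\oplus\overline{\mathcal{K}}$, which produces a chart in $\mathbb{C}^{r}\oplus\mathbb{R}^{2n-2r}$ in which the kernel leaves are complex slices but says nothing yet about the transverse directions; the complex structure on the transverse factor is then obtained by a separate argument, using flow-invariance of \eqref{holomorphic and antiholomorphic bundles} to show that the induced complex structures on the sections $S_z$ are all related by translations. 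You instead feed the condition $\mathrm{ad}_v=0$ into the hypotheses of the Frobenius theorem itself, applying it to the single larger bundle $L=T^{0,1}M+\mathcal{K}$, whose involutivity is exactly condition (C) (the brackets $[T^{0,1}M,T^{0,1}M]$, $[\mathcal{K},\mathcal{K}]$, and $[\mathcal{K},\overline{\mathcal{K}}]$ being handled by integrability of $J$ and of $\ker\omega$, as you note). This yields $n-r$ functions that are simultaneously holomorphic on $M$ and constant along the kernel leaves, so the holomorphic product decomposition and the complex structure on the leaf space come out in one step, at the cost of verifying the slightly less standard hypotheses $L\cap\overline{L}=\mathcal{K}\oplus\overline{\mathcal{K}}\neq 0$ --- the technical point you correctly flag. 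Your explicit three-way organization (A)$\Leftrightarrow$(C), (C)$\Rightarrow$(B), (B)$\Rightarrow$(A) is also cleaner than the paper's, which asserts (A)$\Leftrightarrow$(C) at the end as "the infinitesimal characterization" without isolating the one nontrivial bracket case as you do.
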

\begin{proof}
For the first statement, we work out only that local total straightenability implies section independence direction, as the converse is immediate. 

Set $r=\mathrm{rank}_{\mathbb{C}}\mathcal{K}$ and $2n=\dim_{\mathbb{R}}M$.
Since $\mathcal{K}\oplus \overline{\mathcal{K}}\cap TM$ is integrable and invariant under $J$, it generates a foliation by complex submanifolds of $M$.  More specifically, since both $\mathcal{K}$ and $\mathcal{K}\oplus \overline{\mathcal{K}}$ are integrable, we can apply \emph{the complex Frobenius theorem} (i.e., \cite[Theorem $1^\prime$]{nirenberg1958complex}) to conclude that for any point $p\in M$ there is a neighborhood $U\subset M$ of $p$ and diffeomorphism $\varphi:U\to U^\prime\times U^{\prime\prime}\subset\mathbb{C}^{r}\oplus \mathbb{R}^{2n-2r}$ such that leaves of the $\mathcal{K}\oplus \overline{\mathcal{K}}\cap TU$ foliation are the complex manifolds
\[
L_a:=\left\{p\in U\,\left|\,\varphi(p)=(z,a)\mbox{ for some } z\in U^\prime\subset\mathbb{C}^{r}\right.\right\}
\]
for each $(0,a)\in \{0\}\times U^{\prime\prime}= \varphi(U)\cap\left(\{0\}\oplus \mathbb{R}^{2n-2r}\right)$. In this chart, $V:=q(U)$ may be regarded as a subset in $\mathbb{R}^{2n-2r}$, and $V$ is naturally identified with images of sections $\sigma: V\to U$, which have the form $\sigma (x) = (f(x),x)$ for $f: V \to \mathbb{C}^{r}$. 

In the chart $(U,\varphi)$, invariance of \eqref{holomorphic and antiholomorphic bundles} under flows of vector fields in \eqref{pre-symplectic kernel} implies that the almost complex structure induced on the sections
\[
S_z:=\left\{p\in U\,\left|\,\varphi(p)=(z,a)\mbox{ for some } a\in U^{\prime\prime}\subset\mathbb{R}^{2n-2r}\right.\right\}
\]
are all related by translations, that is, $(z,a)\mapsto (z+t,a)$ defines a (local) biholomorphism between $S_z$ and $S_{z+t}$ for all (sufficiently small) $t\in \mathbb{C}$, identifying complex manifolds $U\cong L_0\times S_0$ in the obvious way.

Additionally, $\omega$ is always invariant under flows of vector fields in \eqref{pre-symplectic kernel} since its Lie derivatives with them vanish,
\[
\mathcal{L}_X\omega =\exd\circ \iota_X \omega+\iota_X\circ \exd\omega = 0
\quad\quad\forall\, X\in \Gamma\left(\mathcal{K}\oplus \overline{\mathcal{K}}\right)
\]
due to  $\exd\omega=0$. So $\omega$ descends to a canonical pre-symplectic form on $N$, that will moreover be symplectic since we quotiented out the original form's kernel. The pre-symplectic form on $L_0\times S_0$ given by trivially extending the form on $S_0$ induced by the natural identification $S_0\cong N$ coincides with $\omega$ on $U$ under the aforementioned identification $U\cong L_0\times S_0$.

The lemma's second if and only if statement now follows because $\mathrm{ad}_v=0$ for all $v\in \mathcal{K}_p$ is the infinitesimal characterization of  \eqref{holomorphic and antiholomorphic bundles}  being invariant under flows of vector fields in \eqref{pre-symplectic kernel}.
\end{proof}

If $\mathrm{ad}_v\neq 0$ for some $v$, the structure can still be partially straightenable, which is detectable by similar \emph{higher order} constructions. 
We are going to iteratively build levels of a filtration
\begin{equation}\label{freeman filtration on preKahler}
\mathcal{K}_{-1}=T^{1,0}M\supset\mathcal{K}=\mathcal{K}_0\supset\mathcal{K}_1\supset\cdots,
\end{equation}
and to proceed on each step we will need to assume that the level built in the preceding step is a regular vector distribution.  This is the required \emph{regularity assumption} mentioned at the beginning of this section.

To begin, label $\mathcal{K}_0:=\mathcal{K}$, and proceeding inductively, suppose we have defined nested distributions $\mathcal{K}_0\supset\mathcal{K}_1\supset\cdots\supset \mathcal{K}_{j-1}$. At a point $p\in M$, for a vector $v$ in the fiber $(\mathcal{K}_{j-1})_p$ define $\mathrm{ad}_v:{T}^{0,1}_pM\to(\mathcal{K}_{j-2})_p/(\mathcal{K}_{j-1})_p $ by
\begin{equation}\label{general KAO}
\mathrm{ad}_v(x+\mathcal{K}_p):=[V,\overline{X}]  \pmod{\mathcal{K}_p \oplus T^{0,1}_pM}
\quad\quad\forall\, v\in \mathcal{K}_p, x\in T^{1,0}_pM/\mathcal{K}_p,
\end{equation}
where $V\in \Gamma(\mathcal{K}_{j-1})$ and $X\in \Gamma(T^{1,0}M)$ are any vector fields satisfying $V(p)=v$ and $X(p)=x$. Verifying independence from the chosen sections $V$ and $X$ is similar to the exercise for \eqref{first KAO}. Define 
\[
\mathcal{K}_j=\{v\in \mathcal{K}_{j-1}\,\mathrm{ad}_v=0\}.
\]

We call this filtration the \emph{Freeman filtration} for its similarity to the filtration on CR structures introduced by Freeman in \cite{freeman1977local}.

\begin{definition}\label{def: nondegeneracy order} A pre-K\"{a}hler manifold $(M,\omega,J)$ \emph{admits a Freeman filtration} if each subset $\mathcal{K}_j\subset\mathbb{C}TM$ computed from the iterative construction above is a regular vector distribution.

The \emph{nondegeneracy order at $p\in M$} of a pre-K\"{a}hler structure $(M,\omega, J)$ admitting a Freeman filtration $\mathcal{K}_{-1}=T^{1,0}M\supset \mathcal{K}_0\supset\mathcal{K}_1\supset\cdots$ in a neighborhood of $p$ is $k$ if $\mathcal{K}_{k-2}\neq\mathcal{K}_{k-1}=0$ and $\infty$ if no such $k$ exists. If the nondegeneracy order at $p$ is $k<\infty$ then the pre-K\"{a}hler structure is \emph{uniformly $k$-nondegenerate} at $p$, or more generally we will call it \emph{uniformly finitely-nondegenerate} when specifying $k$ is unnecessary. If the nondegeneracy order is $\infty$ then the pre-K\"{a}hler structure is called \emph{holomorphically degenerate}  at $p$.
\end{definition}
The latter term, \emph{holomorphically degenerate}, again alludes to an analogous definition in CR geometry, and it characterizes the partial straightenability analogue of Lemma \ref{lemma: total straightenability lemma}.

\begin{proposition}\label{prop: holomorphically degenerate and straightenability}
A pre-K\"{a}hler manifold $(M,\omega,J)$ that admits a Freeman filtration is locally partially straightenable around a point $p\in M$ if and only if it is holomorphically degenerate at $p$.
\end{proposition}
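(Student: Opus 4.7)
The plan is to prove the biconditional by establishing the two implications separately. The forward direction (holomorphic degeneracy implies partial straightenability) generalizes the proof of Lemma \ref{lemma: total straightenability lemma}, replacing the full pre-symplectic kernel $\mathcal{K}$ with the smaller distribution $\mathcal{K}_\infty$ obtained at the stabilized end of the Freeman filtration.

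For the reverse direction, suppose $(M,\omega,J)$ is locally equivalent near $p$ to a product $U' \times M'$ with $\omega$ the trivial extension of a pre-K\"{a}hler form on $U'$. The distribution $\mathcal{E} \subset T^{1,0}M$ along the $T^{1,0}M'$ factor is nonzero and contained in the kernel $\mathcal{K}_0=\mathcal{K}$. For any $V \in \Gamma(\mathcal{E})$ and $X \in \Gamma(T^{1,0}M)$, the bracket $[V,\overline{X}]$ respects the product decomposition, so its $T^{1,0}$-part lies in $\mathcal{E}$; hence the iteratively defined $\mathrm{ad}_v$ vanishes on $\mathcal{E}$ at every step of the Freeman filtration. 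By induction $\mathcal{E} \subset \mathcal{K}_j$ for all $j$, giving $\mathcal{K}_\infty \supset \mathcal{E} \neq 0$, which is precisely holomorphic degeneracy at $p$.

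For the forward direction, suppose the Freeman filtration stabilizes at a nonzero regular distribution $\mathcal{K}_\infty$ in a neighborhood of $p$. The first step is to show $\mathcal{K}_\infty$ is holomorphically involutive, meaning both $[\mathcal{K}_\infty, \mathcal{K}_\infty] \subset \mathcal{K}_\infty$ and $[\mathcal{K}_\infty, \overline{\mathcal{K}}_\infty] \subset \mathcal{K}_\infty + \overline{\mathcal{K}}_\infty$. The mixed-type inclusion is immediate: the stabilization condition $\mathrm{ad}_v = 0$ for $v \in \mathcal{K}_\infty$ yields $[V,\overline{W}]^{1,0} \in \mathcal{K}_\infty$ for $V,W \in \mathcal{K}_\infty$, and applying complex conjugation to the analogous statement with the roles of $V$ and $W$ swapped yields $[V,\overline{W}]^{0,1} \in \overline{\mathcal{K}}_\infty$. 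With involutivity in hand, the complex Frobenius theorem applied as in the proof of Lemma \ref{lemma: total straightenability lemma}, but now with $\mathcal{K}_\infty + \overline{\mathcal{K}}_\infty$ in place of $\mathcal{K} + \overline{\mathcal{K}}$, produces a local identification of a neighborhood of $p$ with a product $L \times S$, where $L$ is a complex leaf of the $\mathcal{K}_\infty$-foliation and $S$ is a transverse slice that inherits a complex structure. Since $\omega$ vanishes on $\mathcal{K}_\infty + \overline{\mathcal{K}}_\infty$ and is invariant under flows along it via $\mathcal{L}_X\omega = \exd(X \im \omega) + X \im \exd\omega = 0$, the form $\omega$ descends to a pre-K\"{a}hler form on $S$, of strictly smaller dimension, completing the partial straightening.

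The main obstacle will be the self-bracket closure $[\mathcal{K}_\infty, \mathcal{K}_\infty] \subset \mathcal{K}_\infty$. That $[V,W] \in \mathcal{K}$ for $V,W \in \mathcal{K}_\infty$ follows from $\exd\omega = 0$ exactly as in the proof of Lemma \ref{lemma: integrable kernel lemma}. To upgrade this to $[V,W] \in \mathcal{K}_\infty$, I plan to induct on $j$, showing $[V,W] \in \mathcal{K}_{j+1}$ assuming $[V,W] \in \mathcal{K}_j$. The Jacobi identity gives $[[V,W],\overline{X}] = [V,[W,\overline{X}]] - [W,[V,\overline{X}]]$, and each term on the right-hand side lies in $\mathcal{K}_j + T^{0,1}M$ by the stabilization $[\mathcal{K}_\infty, T^{0,1}M] \subset \mathcal{K}_\infty + T^{0,1}M \subset \mathcal{K}_j + T^{0,1}M$ combined with the inductive hypothesis, so the $T^{1,0}$-part of $[[V,W],\overline{X}]$ lies in $\mathcal{K}_j$. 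This confirms that $\mathrm{ad}_{[V,W]}$ vanishes at step $j+1$, giving $[V,W] \in \mathcal{K}_{j+1}$ and closing the induction.
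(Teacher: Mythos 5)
Your proposal is correct and follows essentially the same route as the paper: the reverse direction uses the product structure to show the factor distribution survives every stage of the Freeman filtration, and the forward direction applies the complex Frobenius theorem to the stabilized distribution $\mathcal{K}_\mu=\mathcal{K}_\infty$ and uses the stabilization condition $[\mathcal{K}_\infty,T^{0,1}M]\subset\mathcal{K}_\infty+T^{0,1}M$ to push the complex structure and $\omega$ onto the transverse factor. The one place you go beyond the paper is the Jacobi-identity induction establishing $[\mathcal{K}_\infty,\mathcal{K}_\infty]\subset\mathcal{K}_\infty$, a point the paper asserts (as integrability of $\mathcal{K}_\mu$) without proof; your argument for it is valid and a worthwhile addition.
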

\begin{proof}
Supposing $(M,\omega,J)$ is locally partially straightenable, let $U\subset M$ be a neighborhood with the product structure $U=\tilde{U}\times U^\prime$ described in Definition \ref{partially straightenable} with $\tilde{U}$ also pre-K\"{a}hler and $U^\prime$ a complex manifold, that is, $\{0\}\times TU^\prime$ is in the kernel of $\omega$. Hence, one has $T^{1,0}U^\prime\subset \mathcal{K}$, and, moreover,  $T^{1,0}U^\prime\subset \mathcal{K}_j$ for all $j,$ because vector fields in $\{0\}\times TU^\prime$ commute with vector fields in $T\tilde{U}\times \{0\}$.

Conversely, suppose  $(M,\omega,J)$ is holomorphically degenerate and let $\mu$ be the smallest integer for which $\mathcal{K}_{\mu-1}=\mathcal{K}_\mu\neq0$. The distributions $\mathcal{K}_\mu$ and $\mathcal{K}_\mu\oplus \overline{\mathcal{K}_\mu}$ are integrable, so the complex Frobenius theorem  \cite[Theorem $1^\prime$]{nirenberg1958complex} realizes sufficiently small neighborhoods $U\subset M$ as open sets in $\mathbb{C}^{\mathrm{rank}\mathcal{K}_\mu}\oplus\mathbb{R}^{2n-2\mathrm{rank}\mathcal{K}_\mu}$, where the $\mathbb{C}^{\mathrm{rank}\mathcal{K}_\mu}$ coordinates parameterize integral manifolds of $\mathcal{K}_\mu\oplus \overline{\mathcal{K}_\mu}\cap T_M$ with their natural complex structure.

A complex structure on the $\mathbb{R}^{2n-2\mathrm{rank}\mathcal{K}_\mu}$ factor also appears because
\[
[\mathcal{K}_\mu,T^{0,1}M]\equiv 0
\pmod{\mathcal{K}_\mu \oplus T^{0,1}M},
\]
due to $\mathcal{K}_{\mu-1}=\mathcal{K}_\mu$, following essentially the same arguments outlined in the proof of Lemma \ref{pre-symplectic kernel}. This gives sufficiently small neighborhoods $U\subset M$ a product structure $U=\tilde{U}\times U^\prime$ where leaves of the $\mathcal{K}_\mu\oplus \overline{\mathcal{K}_\mu}\cap T_M$ foliation are the submanifolds $\{\mathrm{pt}\}\times U^\prime$. Hence, $\omega$ vanishes on $\{\mathrm{0}\}\times TU^\prime$, and $U$ has the partially straightenable structure described in Definition \ref{partially straightenable}.
\end{proof}

These formulations of $k$-nondegeneracy can be alternatively expressed in terms of complex coframes consisting of elements in $\Omega^{1,0}(M)$ and $\Omega^{0,1}(M)$ which will be our notation for type (1,0) and (0,1) forms. Indeed, suppose $(M,\omega,J)$ admits a Freeman filtration $\mathcal{K}_{-1}=T^{1,0}M\supset\mathcal{K}=\mathcal{K}_0\supset\mathcal{K}_1\supset\cdots$ with dimensions $d_j=\dim_\mathbb{C}(\mathcal{K}_{j})-\dim_\mathbb{C}(\mathcal{K}_{j+1})$, and let $\mu$ be the smallest integer for which $d_\mu = 0$. Hence, it follows that
\[
n = \dim_{\mathbb{C}}(M) = \sum_{j=-1}^{\mu-1}d_j.
\]
Let $X_1,\ldots, X_{n}\in \mathcal{K}_{-1}$ be linearly independent vector fields adapted to the Freeman filtration in the sense that
\[
\mathcal{K}_j=\spn_{\mathbb{C}}\left\{X_{\ell}\,\left|\, \ell>\sum_{t=-1}^{j-1}d_j\right.\right\}
\quad\quad\forall j\geq0.
\]

Let $\theta^1,\ldots,\theta^n\in \Omega^{1,0}(M)$ be the $(0,1)$-forms dual to $X_1,\ldots, X_n$, and extend $\{X_j\}$ and $\{\theta^k\}$ to frames and coframes $(X_1,\ldots,X_{2n})$ and $(\theta^1,\ldots, \theta^2)$ of $\mathbb{C}TM$ and $\mathbb{C}T^*M$ by the rules
\[
X_{j+n}:=\overline{X_j}
\quad\mbox{ and }\quad
\theta^{j+n}:=\overline{\theta^j}
\quad\quad\forall\, j=1,\ldots, n.
\]
Based on the relationship between the structure functions $c^{i}_{jk}$ given by 
\begin{equation}\label{coframe struct eqns}
\exd\theta^i=\sum_{j,k} c^i_{jk} \theta^j\wedge\theta^k
\quad\mbox{ with }\quad
c^i_{jk}=-c^i_{kj}
\end{equation}
and $\Gamma^i_{jk}$ given by $[X_j,X_k]=\sum_{i}\Gamma^i_{jk}X_i$ with $\Gamma^i_{jk}=-\Gamma^i_{kj}$ arising from 
\[
c^{i}_{jk}=\exd\theta^i(X_j,X_k)=-\theta^i([X_j,X_k])=-\Gamma^i_{jk},
\]
the operators in \eqref{general KAO} are represented by special components from the $c^{i}_{jk}$ array.

\begin{proposition}
A pre-K\"{a}hler manifold $(M,\omega,J)$ is uniformly $k$-nondegenerate for some $k<\infty$ if and only if it admits a complex coframe $\theta^1,\ldots, \theta^{2n}$ with structure functions $c^{i}_{jk}$ given by \eqref{coframe struct eqns} satisfying the following properties:
\begin{itemize}
\item $\theta^1,\ldots,\theta^n\in \Omega^{1,0}(M)$ and $\theta^{n+j}=\overline{\theta^j}$ for all $j=1,\ldots,n$.
\item Setting $d_{-1}:=\tfrac{1}{2}\mathrm{rank}(\omega)$, the kernel of $\omega$ is the annihilator of $\{\theta^{j},\theta^{j+n}\,|\,j\leq d_{-1}\}$, that is,
\[
\mathrm{ker}\omega = \left\{\theta^{j},\theta^{j+n}\,\left|\, j\leq d_{-1} \right.\right\}^\perp.
\]
\item There exists a sequence of integers $d_0,\ldots, d_{k-2}$ such that $n=\sum_{j=-1}^{k-2}d_j$ with two properties, which we formulate with respect to a partition mapping 
\[
p(j):=\min \left\{s\,\left|\, j < \sum_{t=1}^{s}d_{t-2}\right.\right\} 
\quad\quad\forall \,j=1,\ldots, n
\]
partitioning the indices $1,\ldots, n$ into the $k$ level sets $p^{-1}(1)$,\ldots, $p^{-1}(k)$ of $p$:
\begin{enumerate}
\item For all $(i,j,k)$ with $i,j\leq n<k$ and $p(i)+1<p(j)$, we require $c^i_{jk}=0$.
\item Label $d^1:=0$, and for each $1<\eta\leq k$, label $d^\eta:=\sum_{t=1}^{\eta-1} d_{t-2}$. For such $\eta$ and each integer $d^\eta<j\leq d^\eta+d_{\eta-2}$, consider the $d_{\eta-3}\times d_{-1}$ matrix $T_j$ whose $(\alpha,\beta)$ component is 
\[
(T_j)_{\alpha,\beta}=c^{\alpha+d^{\eta-1}}_{j\beta}
\quad\quad\forall\, 1\leq \alpha\leq d_{\eta-3},\,1\leq \beta\leq d_{-1}.
\]
For every $1<\eta\leq k$, the matrices $T_{d^\eta+1},\ldots, T_{d^\eta+d_{\eta-2}}$ are all linearly independent.
\end{enumerate}
\end{itemize}
\end{proposition}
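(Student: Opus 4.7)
The statement is a dictionary between the intrinsic Freeman filtration $\mathcal{K}_{-1}\supset\mathcal{K}_0\supset\cdots$ of Definition \ref{def: nondegeneracy order} and adapted complex coframings, so my plan is to prove the two implications by explicit translation.

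\textbf{Forward direction.} Given a uniformly $k$-nondegenerate structure, admissibility of the Freeman filtration guarantees that the integers $d_j:=\dim_{\mathbb{C}}\mathcal{K}_j-\dim_{\mathbb{C}}\mathcal{K}_{j+1}$ are locally constant. Pick smooth local complex vector fields $X_1,\ldots,X_n$ spanning $T^{1,0}M$ subordinated to the filtration: for each $1\leq\eta\leq k$, the block $X_{d^\eta+1},\ldots,X_{d^\eta+d_{\eta-2}}$ should span a chosen complement to $\mathcal{K}_{\eta-1}$ inside $\mathcal{K}_{\eta-2}$ (with $\mathcal{K}_{-1}=T^{1,0}M$ and $\mathcal{K}_{k-1}=0$). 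Setting $\theta^j$ dual to $X_j$ in $T^{\ast(1,0)}M$ and $\theta^{n+j}=\overline{\theta^j}$ produces the required coframing. The kernel clause holds because $\mathcal{K}_0=\mathrm{span}\{X_\ell:\ell>d_{-1}\}$ forces $\ker\omega\otimes\mathbb{C}=\mathcal{K}_0\oplus\overline{\mathcal{K}_0}=\{\theta^j,\theta^{n+j}:j\leq d_{-1}\}^\perp$, with $d_{-1}=\tfrac12\mathrm{rank}(\omega)$ as in the definition.

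Property (a) is the coframe incarnation of the containment $[\mathcal{K}_{s-1},T^{0,1}M]\subset\mathcal{K}_{s-2}+T^{0,1}M$ for every $s$, which is built into the definition of the $\mathrm{ad}_v$ operators. If $p(j)=\eta$ then $X_j\in\mathcal{K}_{\eta-2}$, and for any $\overline{X}_\ell\in T^{0,1}M$ the bracket $[X_j,\overline{X}_\ell]\in\mathcal{K}_{\eta-3}+T^{0,1}M$, i.e. the $X_i$-component vanishes unless $X_i\in\mathcal{K}_{\eta-3}$; translating via $X_i\in\mathcal{K}_{\eta-3}$ iff $p(i)\geq\eta-1$ together with $c^i_{jk}=-\Gamma^i_{jk}$, this is precisely clause (a). Property (b) expresses injectivity of the linear map $\mathcal{K}_{\eta-2}/\mathcal{K}_{\eta-1}\to\mathrm{Hom}(T^{0,1}M,\mathcal{K}_{\eta-3}/\mathcal{K}_{\eta-2})$ induced by $v\mapsto\mathrm{ad}_v$: the matrices $T_j$ are the coordinate representations of $\mathrm{ad}_{X_j}$ with respect to the chosen bases, and their pointwise $\mathbb{C}$-linear independence says exactly that $\mathcal{K}_{\eta-1}=\{v\in\mathcal{K}_{\eta-2}:\mathrm{ad}_v=0\}$. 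The top level $\eta=k$ forces $\mathcal{K}_{k-1}=0$, capturing $k$-nondegeneracy.

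\textbf{Reverse direction.} Starting from a coframing with the three properties, define candidate filtrands $\mathcal{K}_j^{\circ}:=\mathrm{span}_{\mathbb{C}}\{X_\ell:\ell>d^{j+2}\}\subset T^{1,0}M$, where $X_\ell$ is dual to $\theta^\ell$. The kernel clause gives $\mathcal{K}_0^{\circ}=\mathcal{K}$, starting an induction. Assuming $\mathcal{K}_j^{\circ}=\mathcal{K}_j$ for $j\leq\eta-2$, property (a) guarantees that $\mathrm{ad}_v$ for $v\in\mathcal{K}_{\eta-2}$ is well-defined into $\mathcal{K}_{\eta-3}/\mathcal{K}_{\eta-2}$, and property (b) identifies its kernel inside $\mathcal{K}_{\eta-2}$ as exactly $\mathcal{K}_{\eta-1}^{\circ}$. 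This yields $\mathcal{K}_{\eta-1}^{\circ}=\mathcal{K}_{\eta-1}$ for all $\eta\leq k$, and the top block ($\eta=k$, where the matrix independence condition involves no further quotienting) produces $\mathcal{K}_{k-1}=0$ together with $\mathcal{K}_{k-2}\neq 0$ (since $d_{k-2}>0$), giving uniform $k$-nondegeneracy.

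\textbf{Main obstacle.} No deep argument is required, as the statement is a direct dictionary between the intrinsic filtration and frame-theoretic language. The chief difficulty is index bookkeeping: the partition function $p$, the block markers $d^\eta$, and the filtration levels $\mathcal{K}_j$ are offset from each other in ways easy to miscount, and the conjugation swap between holomorphic and antiholomorphic indices must be tracked throughout. Once these are organized cleanly, both directions reduce to pointwise linear-algebraic verifications.
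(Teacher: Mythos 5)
The paper states this proposition without a separate proof: the paragraphs immediately preceding it construct the frame $X_1,\ldots,X_n$ adapted to the Freeman filtration, pass to the dual coframe, and observe that the operators of \eqref{general KAO} are encoded by components of the $c^i_{jk}$ array, so your two-directional translation is precisely the argument the authors leave implicit, and its structure is sound. The one point worth tightening is your claim that the matrices $T_j$ are \emph{the} coordinate representations of $\mathrm{ad}_{X_j}$: their columns are indexed only by $\beta\le d_{-1}$, i.e.\ they record the action solely on the antiholomorphic directions complementary to $\overline{\mathcal{K}}$, so in the forward direction linear independence of the full operators $\mathrm{ad}_{X_j}$ on all of $T^{0,1}M$ (which is what uniform $k$-nondegeneracy gives) does not formally imply independence of these restricted blocks --- one must additionally argue that testing against the non-kernel directions suffices, which is essentially the bracket analysis carried out in Proposition \ref{prop: leading terms of k nond def eqn lemma}. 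The reverse direction is unaffected, since independence of restrictions trivially implies independence of the full operators.
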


\subsection{Pre-K\"{a}hler--Sasakian correspondence}\label{sec: Symmetry reduction and pre-Sasakian structures} 

The  constructions in \ref{sec: Straightenability and k-nondegeneracy} arise from translating developments in CR geometry to the pre-K\"{a}hler setting using a relationship between pre-K\"{a}hler structures and pre-Sasakian structures. In this section we establish a one-to-one correspondence between these structures, and note some immediate implications for pre-K\"{a}hler symmetries, which follow  from known results in CR geometry.

To build the first direction of this correspondence, suppose we have a pre-K\"{a}hler manifold $(M,\omega,J)$, and let $\rho$ be a locally defined potential of $\omega$,  as in \eqref{potential identity}.
Of course $\rho$ is only defined modulo real-valued functions in the kernel of $\partial\overline{\partial}$, which is exactly the family of real parts of holomorphic functions on $U$, i.e. pluri-harmonic functions\footnote{See  \cite[Section 3]{poincare1898proprietes} as an interesting source where this characterization is worked out.}. That is, any other potential $\rho^\prime$ of $\omega$ on $U$ has the form
\begin{equation}\label{potentials family}
\rho^\prime = \rho+\Re(f)
\end{equation}
for some holomorphic function $f$ on $U$. 

Introducing a new complex variable $w=u+\operatorname{i}v$, for each potential $\rho$ the function $u-\rho$ on $\mathbb{C}\times U$ cuts out a real hypersurface
\begin{equation}\label{associated real hypersurface}
M_\rho:=\{(w,z)\in \mathbb{C}\times U\,|\,\Re(w)=\rho(z)\}
\end{equation}
carrying the infinitesimal symmetry  $\frac{\partial}{\partial v}=\mathrm{Re}(2\operatorname{i}\frac{\partial}{\partial w})$.
Let us denote this symmetry as
\begin{equation}\label{associated real hypersurface symmetry}
X_\rho:=\left.\frac{\partial}{\partial v}\right|_{M_\rho}\in \Gamma(TM_\rho).
\end{equation}
Since $X_\rho$ is indeed transverse to the natural CR distribution on $M_\rho$, the pair $(M_\rho,X_\rho)$ are examples of pre-Sasakian manifolds. If $(M,\omega,J)$ is K\"{a}hler then $M_\rho$ is a strongly pseudo-convex CR hypersurface, and $(M_\rho, X_\rho)$ defines a Sasakian structure in a standard way.

For any other potential $\rho^\prime$, writing it in the form \eqref{potentials family} yields
\[
M_{\rho^\prime}=\{(w,z)\in \mathbb{C}\times U\,|\,\Re(w+f(z))=\rho(z)\},
\]
which shows that $M_{\rho^\prime}$ is the image of $M_\rho$ under the biholomorpic transformation $(w,z)\mapsto (w-f(z),z)$, defining a CR equivalence between the hypersurfaces.  Since $\frac{\partial}{\partial v}$ is also invariant under this transformation, it carries $X_\rho$ to $X_{\rho^\prime}$. Let us summarize these observations in a lemma.
\begin{lemma}\label{prekahler to presasakian}
For a pre-K\"{a}hler structure $(M,\omega,J)$, a point $p\in M$, and complex variable $w=u+\operatorname{i}v$, let $\rho$ be a potential of $\omega$ in a neighborhood $U$ of $p$, and let $M_\rho$ be the associated real hypersurface given by \eqref{associated real hypersurface} endowed with the symmetry $X_\rho$ from \eqref{associated real hypersurface symmetry}. The germ at $(\rho(p),p)\in \mathbb{C}\times U$ of the pre-Sasakian structure $(M_\rho,X_\rho)$ does not depend on $\rho$, and is therefore determined by the germ at $p$ of $(M,\omega,J)$.
\end{lemma}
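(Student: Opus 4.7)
The plan is to exploit the already-noted fact that any two potentials of $\omega$ on a sufficiently small neighborhood $U$ of $p$ differ by the real part of a holomorphic function. This reduces the independence claim to exhibiting an explicit local biholomorphism of $\mathbb{C} \times U$ carrying $M_\rho$ to $M_{\rho'}$ and intertwining $X_\rho$ with $X_{\rho'}$.

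First I would invoke the $\partial\overline{\partial}$-Poincar\'e lemma to guarantee the existence of potentials on some neighborhood $U \subset M$ of $p$, and record the classification of all such potentials: since the kernel of $\partial\overline{\partial}$ acting on real-valued functions is exactly the space of pluriharmonic functions on $U$ (after possibly shrinking $U$ so that pluriharmonic functions are real parts of holomorphic functions), any two potentials $\rho, \rho'$ satisfy $\rho' = \rho + \Re(f)$ for some holomorphic $f$ on $U$.

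Next I would define the candidate equivalence $\Phi \colon \mathbb{C} \times U \to \mathbb{C} \times U$ by $\Phi(w,z) := (w - f(z), z)$. Checking directly: $\Phi$ is biholomorphic (its inverse is $(w,z)\mapsto(w+f(z),z)$), and $\Phi(M_\rho) = M_{\rho'}$ because $\Re(w - f(z)) = \Re(w) - \Re(f(z)) = \rho(z) - \Re(f(z)) = -\rho'(z) + 2\rho(z) - \rho(z)$... let me redo: if $(w,z) \in M_\rho$ then $\Re(w) = \rho(z)$, so $\Re(w - f(z)) = \rho(z) - \Re(f(z))$; for the image point to lie in $M_{\rho'}$ we need $\Re(w - f(z)) = \rho'(z) = \rho(z) + \Re(f(z))$, which fails. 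So the correct map is $\Phi(w,z) := (w + f(z), z)$ (or equivalently apply $\Phi^{-1}$); I would carefully verify signs here. With the correct sign, $\Phi$ sends $M_\rho$ onto $M_{\rho'}$ via a biholomorphism of the ambient complex manifold, which automatically restricts to a CR isomorphism of the hypersurfaces.

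Finally I would check that $\Phi_* (\partial_v) = \partial_v$, which is immediate since $f(z)$ depends only on the $U$-variables and translation in $w$ by a $z$-dependent quantity commutes with $\partial_v = \Re(-\operatorname{i}\partial_w)$. Therefore $\Phi$ is a CR diffeomorphism $M_\rho \to M_{\rho'}$ taking $X_\rho$ to $X_{\rho'}$, which is precisely an equivalence of pre-Sasakian structures in the sense of Definition \ref{def: Pre-contact and Pre-Sasakian}. Since $\Phi$ fixes the point $(\rho(p), p)$ (up to a shift by $(f(p), 0)$, which can be absorbed by relabelling base points), the germs of $(M_\rho, X_\rho)$ and $(M_{\rho'}, X_{\rho'})$ agree, and the germ is determined by $(M,\omega,J)$ alone. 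I do not anticipate any serious obstacle; the only mild subtlety is being careful with the sign of $f$ in the definition of $\Phi$ and with the fact that the pluriharmonic/holomorphic identification requires $U$ to be, say, simply connected, which can be arranged by shrinking $U$.
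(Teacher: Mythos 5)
Your proof is correct and takes essentially the same route as the paper: both reduce to the fact that two potentials differ by $\Re(f)$ for a holomorphic $f$ and then exhibit the fibered biholomorphism $(w,z)\mapsto(w\pm f(z),z)$, which carries $M_\rho$ onto $M_{\rho'}$ and preserves $\tfrac{\partial}{\partial v}$. Your sign bookkeeping comes out right, and your extra remark about realigning the base points (via a $v$-translation, itself a symmetry) addresses a minor detail the paper leaves implicit.
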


Now for the converse direction, let $(S,X)$ be a pre-Sasakian structure defined by an abstract hypersurface-type $(2n+1)$-dimensional CR manifold $S$ equipped with an infinitesimal symmetry $X\in \Gamma(TS)$ such that $X$ is everywhere transverse to the CR distribution. For a point $p\in S$, take a sufficiently small neighborhood $V\subset S$ such that the leaf space $M$ of integral curves of $X$ in  $V$ has a smooth structure.  Let $\tilde \theta$ be the $1$-form on $V\subset S$ annihilating its CR distribution $\tilde\theta^\perp$ satisfying $X \im \tilde\theta=1$, which is unique. Since $X$ is a symmetry of  $\tilde\theta^\perp$, there is some $c\in C^\infty(S)$ such that $\mathcal{L}_X\tilde\theta=c\tilde\theta$, which implies $\mathcal{L}_X\tilde\theta=0$ because
\[
c=c\tilde\theta(X)=\mathcal{L}_X\tilde\theta(X)=[\exd \tilde\theta(X)+X\im  \exd\tilde\theta](X)=[X \im \exd\tilde\theta](X)=\exd\tilde\theta(X,X)=0.
\]

Therefore,  $\tilde\theta$ is invariant under flows of $X$ and descends to a $1$-form $\theta$ on $M$. By assumption, the CR manifold's almost complex structure operator is also preserved by $X$, and therefore descends to an almost complex structure $J$ on $M$, equipping $M$ with a complex manifold structure. The pre-symplectic form $\omega:=\exd\theta$ on $M$ is closed because it is exact. It is type $(1,1)$ with respect to $J$ because its pullback, $\exd \tilde\theta,$ is type $(1,1)$ with respect to the complex structure inducing $J$.

Thus, from a pre-Sasakian structure we obtain a canonically associated pre-K\"{a}hler structure $(M,\omega,J)$. An important question that remains is whether this construction is an inverse of the pre-K\"{a}hler  to pre-Sasakian construction in Lemma \ref{prekahler to presasakian}. To answer that, let us consider embeddings of the abstract CR structure on $S$ into $\mathbb{C}^{n+1}$ given by \cite[Theorem II.1, Section 1]{baouendi1985cr}, where it is shown that any point $p\in S$ is contained in a neighborhood $V\subset S$ that can be embedded into $\iota(V)\subset\mathbb{C}\oplus \mathbb{C}^n$ as the graph
\begin{equation}\label{BRT embedding}
\iota(V) = \{(w,z)\,|\, \Re(w)=\rho(z),\, z\in U\}
\end{equation}
for some $U\subset\mathbb{C}^n$ and some real-valued function $\rho:U\subset \mathbb{C}^n\to \mathbb{R}$, such that 
\[
\iota_*X=\Re\left(-\operatorname{i}\frac{\partial}{\partial w}\right)
\quad\mbox{ and }\quad
\iota(p)=0.
\] 
Taking $\tilde\theta = \exd v+\operatorname{i}(\partial-\overline{\partial})\rho$ indeed annihilates the CR distribution on \eqref{BRT embedding} and satisfies $\tilde\theta(\tfrac{\partial}{\partial v})=1$. The leaf space $M$ is parameterized by the $z$ coordinates, with respect to which we get $\theta=\operatorname{i}(\partial-\overline{\partial})\rho$.
Since
\[
d\theta=(\partial+\overline{\partial})\theta = -2\operatorname{i}\partial\overline{\partial}\rho,
\]
taking $\omega=d\theta$ satisfies \eqref{potential identity}, from which it is clear that the two constructions revert each other. So we have established the following.

\begin{theorem}\label{thm: correspondence theorem}
There is a one-to-one correspondence between pre-K\"{a}hler and pre-Sasakian structures given by the constructions in Lemma \ref{prekahler to presasakian}.
\end{theorem}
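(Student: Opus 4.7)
The plan is to verify three things: that the construction sending a pre-Sasakian structure $(S,X)$ to $(M,\omega,J)$ in the paragraphs preceding the theorem is well-defined and does yield a pre-K\"ahler structure; that the construction of Lemma \ref{prekahler to presasakian} and this reverse construction are mutual inverses on germs; and that the correspondence is natural enough to descend to equivalence classes. One direction, the map from pre-K\"ahler to pre-Sasakian germs, has already been established up to notation in Lemma \ref{prekahler to presasakian}, where the ambiguity in the potential $\rho$ was shown to be absorbed by the biholomorphism $(w,z)\mapsto (w-f(z),z)$, which moreover preserves $\partial/\partial v$. So the main task is the reverse direction and the inversion check.

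For the pre-Sasakian $(S,X)$ to pre-K\"ahler construction, I would first justify that in a sufficiently small neighborhood $V$ of any point $p\in S$, the orbit space $M$ of $X|_V$ carries a canonical smooth structure (via the local flow box theorem applied to the nowhere-vanishing vector field $X$). Next, I would check that the $1$-form $\tilde\theta$ on $V$ uniquely determined by $\tilde\theta|_{\tilde\theta^\perp}=0$ and $X\im\tilde\theta=1$ satisfies $\mathcal{L}_X\tilde\theta=0$; the computation in the excerpt, using Cartan's magic formula together with the $X$-invariance of the CR distribution, already supplies this. Consequently $\tilde\theta$ descends to a $1$-form $\theta\in\Omega^1(M)$, and likewise the CR almost complex operator descends to an almost complex operator $J$ on $M$ (which is integrable since $T^{1,0}S$ is Frobenius-integrable and $X$-invariant). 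Setting $\omega:=\exd\theta$, closedness is immediate; the $(1,1)$-type condition follows from the fact that $\exd\tilde\theta$ is type $(1,1)$ with respect to the ambient complex structure after any local CR embedding into $\CC^{n+1}$, and this type decomposition is preserved by the quotient. Hence $(M,\omega,J)$ is a pre-K\"ahler structure.

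The key technical tool for showing these constructions invert each other is the Baouendi--Rothschild--Treves embedding result \cite[Theorem II.1]{baouendi1985cr} cited in the excerpt, which realizes $(S,X)$ locally as a graph $\{\Re(w)=\rho(z)\}\subset \CC\oplus\CC^n$ with $\iota_*X=\Re(-\ri\,\partial/\partial w)$. With this normal form in hand, I would compute directly: on $\iota(V)$, the $1$-form $\tilde\theta=\exd v+\ri(\partial-\overline\partial)\rho$ annihilates the CR distribution and pairs to $1$ with $\partial/\partial v$, and the orbit space is parametrized by $z\in U$, yielding $\theta=\ri(\partial-\overline\partial)\rho$ and $\omega=\exd\theta=-2\ri\,\partial\overline\partial\rho$. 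This is exactly $\frac{\ri}{2}\omega=\partial\overline\partial\rho$, so $\rho$ is a potential for the pre-K\"ahler form produced by the reverse construction, and reapplying Lemma \ref{prekahler to presasakian} with this potential recovers precisely the hypersurface and symmetry we started from. Conversely, starting from $(M,\omega,J)$, choosing a potential $\rho$, forming $(M_\rho,X_\rho)$, and then running the pre-Sasakian to pre-K\"ahler construction, the same computation shows one recovers $\omega$ and $J$ on the germ of the leaf space at $p$.

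The main obstacle is managing the non-uniqueness inherent on both sides: potentials of $\omega$ are only defined modulo real parts of holomorphic functions, and the BRT embedding of $(S,X)$ is only defined modulo biholomorphisms fixing the $\partial/\partial v$ direction. The core content of the proof is the verification that these two ambiguities are matched under the correspondence (both being parametrized by pluriharmonic functions on $U$), so that the assignments descend to a bijection between germs of pre-K\"ahler structures at a point and germs of pre-Sasakian structures at a point of the associated orbit. Once this matching is explicit, everything else is bookkeeping.
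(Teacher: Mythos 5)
Your proposal follows essentially the same route as the paper: one direction is exactly Lemma \ref{prekahler to presasakian}, the reverse direction is the descent of $\tilde\theta$ and $J$ to the leaf space via $\mathcal{L}_X\tilde\theta=0$, and the inversion check is the same coordinate computation $\theta=\operatorname{i}(\partial-\overline{\partial})\rho$, $\exd\theta=-2\operatorname{i}\partial\overline{\partial}\rho$ in a Baouendi--Rothschild--Treves graph realization. Your added emphasis on matching the two ambiguities (potentials modulo pluriharmonic functions versus embeddings modulo biholomorphisms preserving $\partial/\partial v$) is exactly the content the paper distributes between Lemma \ref{prekahler to presasakian} and the first statement of Corollary \ref{equivalent embeddings corollary}, so the argument is correct and not materially different.
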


The correspondence in \eqref{thm: correspondence theorem} lifts to a  natural correspondence between symmetries of pre-K\"{a}hler structures and their associated pre-Sasakian structures. In turn, symmetries of pre-Sasakian structures of course embed into the symmetries of their underlying CR structures. The embedded subalgebra in the CR symmetry algebra can even be realized by holomorphic vector fields, that is, holomorphic  sections of $T^{(1,0)}\mathbb{C}^{n+1}$.
\begin{corollary}\label{equivalent embeddings corollary}
For a pre-Sasakian structure $(S,X)$, given local embeddings into $\mathbb{C}^{n+1}$ of the form \eqref{BRT embedding}, there exists a (local) biholomorphism of $\mathbb{C}^{n+1}$ carrying one embedded CR hypersurface to the other.
For a pre-K\"{a}hler structure $(M,\omega,J)$ with potential $\rho:U\subset M\to\mathbb{R}$ defined in a neighborhood $U$ of a point $p\in M$, the infinitesimal symmetry algebra of  $(M,\omega,J)$ at $p$ naturally embeds as a subalgebra into the holomorphic infinitesimal symmetry algebra at $(\rho(p),p)$ of the associated CR hypersurface $S_\rho=\{(w,z)\in \mathbb{C}\times U\,|\, \Re(w)=\rho(z)\}$, i.e., the subalgebra of germs at $(\rho(p),p)$ of vector fields on $\mathbb{C}\times U$ whose flows generate local biholomorphisms around $(\rho(p),p)$  leaving $S_\rho$ invariant.
\end{corollary}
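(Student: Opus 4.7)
The plan is to prove both claims using the pre-K\"{a}hler--Sasakian correspondence of Theorem \ref{thm: correspondence theorem} together with a direct holomorphic lifting.

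For the first claim, suppose $\iota_1,\iota_2$ are two BRT embeddings of $(S,X)$, realized as graphs $\{\Re(w)=\rho_k(z)\}$ over open sets $U_k\subset\mathbb{C}^n$ parameterizing the $X$-leaf space. Each produces the pre-K\"{a}hler form $\omega_k=\tfrac{2}{\ri}\partial\overline{\partial}\rho_k$ on $U_k$ via the construction preceding Theorem \ref{thm: correspondence theorem}, and by the one-to-one correspondence established there both must coincide with the pre-K\"{a}hler structure canonically associated to $(S,X)$. Hence there exists a biholomorphism $\phi:U_1\to U_2$ with $\phi^*\omega_2=\omega_1$, so $\phi^*\rho_2-\rho_1$ is $\partial\overline{\partial}$-closed, hence locally equal to $\Re(f)$ for some holomorphic $f$ on $U_1$ by the $\partial\overline{\partial}$-lemma. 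The local biholomorphism
\[
\Phi(w,z):=(w-f(z),\phi(z))
\]
of $\mathbb{C}^{n+1}$ then carries $\iota_1(V)$ onto $\iota_2(V)$, as substitution into $\Re(w)=\rho_1(z)$ directly verifies.

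For the second claim, fix $\rho$ and let $Y$ be a pre-K\"{a}hler symmetry near $p$. The condition $L_YJ=0$ yields $Y=Z+\overline{Z}$ with $Z\in\Gamma(T^{1,0}M)$ holomorphic, while $L_Y\omega=0$ combined with $\omega=\tfrac{2}{\ri}\partial\overline{\partial}\rho$ gives $\partial\overline{\partial}(L_Y\rho)=0$, so $L_Y\rho=\Re(h)$ for some holomorphic $h$ on $U$, uniquely determined up to an imaginary additive constant that I normalize by requiring $\Im h(p)=0$. I then define the holomorphic vector field $\tilde Z:=h(z)\,\partial_w+Z$ on a neighborhood of $(\rho(p),p)$ in $\mathbb{C}\times U$, and verify
\[
(\tilde Z+\overline{\tilde Z})\bigl(\Re(w)-\rho(z)\bigr)=\Re(h)-Y\rho=0,
\]
so that $\tilde Z+\overline{\tilde Z}$ is tangent to $S_\rho$ and its flow consists of local biholomorphisms preserving $S_\rho$; since also $[\tilde Z,\partial_w]=0$, the lift lands in the subalgebra of holomorphic symmetries commuting with $X_\rho$.

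Finally, I would check that $Y\mapsto\tilde Z$ is an injective Lie algebra homomorphism. Injectivity follows immediately from the normalization, since $\tilde Z=0$ forces $Z=0$ and $h=0$, whence $Y=0$. For brackets, using the identity $[Z_1,\overline{Z_2}]=0$ for holomorphic $Z_1,Z_2$ (a direct chain-rule computation in holomorphic coordinates), I obtain $[Y_1,Y_2]=[Z_1,Z_2]+\overline{[Z_1,Z_2]}$ and $L_{[Y_1,Y_2]}\rho=\Re(Z_1(h_2)-Z_2(h_1))$, while a direct expansion gives $[\tilde Z_1,\tilde Z_2]=(Z_1(h_2)-Z_2(h_1))\partial_w+[Z_1,Z_2]$, so the two sides agree modulo an imaginary-constant multiple of $\partial_w$. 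The main obstacle I anticipate is handling this imaginary-constant ambiguity consistently across the algebra; this amounts to the central-extension bookkeeping relating the pre-Sasakian symmetry algebra to its quotient by $\langle X_\rho\rangle$, and everything else follows directly from the $\partial\overline{\partial}$-lemma and the expression $\omega=\tfrac{2}{\ri}\partial\overline{\partial}\rho$.
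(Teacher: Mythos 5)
Your argument follows the paper's proof very closely. For the first claim you reproduce the paper's reasoning essentially verbatim: Theorem~\ref{thm: correspondence theorem} produces a biholomorphism $\phi$ of the leaf spaces matching the two pre-K\"ahler forms, the difference of potentials is then pluriharmonic, hence equal to $\Re(f)$ for a holomorphic $f$, and the fiber-shearing map $(w,z)\mapsto(w\pm f(z),\phi(z))$ carries one graph onto the other (with your sign convention $\phi^*\rho_2-\rho_1=\Re(f)$ the correct shear is $w+f(z)$ rather than $w-f(z)$; this is cosmetic). For the second claim the paper works at the level of local Lie groups --- a pre-K\"ahler symmetry $\varphi$ sends $\rho$ to another potential, hence lifts to $(w,z)\mapsto(w-f(z),\varphi(z))$ preserving $S_\rho$ --- and then passes to Lie algebras in one line, whereas you linearize directly and construct $\tilde Z=h\,\partial_w+Z$; the two are equivalent, and your infinitesimal computation of the bracket is correct. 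The imaginary-constant ambiguity you flag is a genuine subtlety, but be aware that it is equally present, and equally unaddressed, in the paper's own proof: there too $f$ is determined only up to an additive imaginary constant, i.e.\ up to composing the lift with a $v$-translation. What either argument canonically produces is a Lie algebra $\hat{\mathfrak g}$ of holomorphic infinitesimal symmetries of $S_\rho$ containing $X_\rho$ in its center and surjecting onto the pre-K\"ahler symmetry algebra $\mathfrak g$ with kernel $\langle X_\rho\rangle$; whether this central extension splits, so that $\mathfrak g$ literally sits inside $\hat{\mathfrak g}$ as a subalgebra, is settled by neither proof. For every use of the corollary in the paper --- in particular the finite-dimensionality transfer in Theorem~\ref{finite-dimensionality theorem} --- the inequality $\dim\mathfrak g\leq\dim\hat{\mathfrak g}\leq\dim$ of the holomorphic symmetry algebra of $S_\rho$, which follows from the surjection alone, is all that is needed, so the bookkeeping you defer can safely remain deferred without affecting any downstream result.
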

\begin{proof}
Consider two such embeddings realized as graphs of functions $\rho,\rho^\prime:M\subset\mathbb{C}^n\to \mathbb{R}$ centering the same point in $S$ at the origin, namely with $\rho(0)=\rho^\prime(0)=0$. By Theorem \ref{thm: correspondence theorem}, the associated pre-K\"{a}hler structures given by $\omega=-2\operatorname{i}\partial\overline{\partial}\rho$ and $\omega^\prime=-2\operatorname{i}\partial\overline{\partial}\rho^\prime$  on $M\subset\mathbb{C}^n$ are locally equivalent at $0$, so there is a (local) biholomorphism $\varphi:M\to M$ with $\varphi(0)=0$ and $\varphi^*\omega^{\prime}=\omega$. Since $\varphi$ is holomorphic, $\varphi^*$ commutes with $\partial$ and $\overline{\partial}$, and hence
\[
-2\operatorname{i}\partial\overline{\partial}\varphi^*\rho^{\prime}=-2\operatorname{i} \varphi^*   \partial\overline{\partial}\rho^{\prime}=\varphi^* \omega^{\prime}=\omega.
\]
Since $\varphi^*\rho^{\prime}$ is a potential of $\omega$, it has the form \eqref{potentials family} for some holomorphic function $f$ on $M$, that is,
\[
\rho^{\prime}\left(\varphi^{-1}(z)\right)=\varphi^*\rho^{\prime}(z)=\rho(z)+\Re\left(f(z)\right).
\]
Therefore, the local biholomorphism $(w,z)\mapsto (w-f(z),\varphi(z))$ transforms the second embedding $ \{(w,z)\,|\, \Re(w)=\rho^\prime(z),\, z\in U\}$ into the first
\[
 \{(w,z)\,|\, \Re(w)=\rho(z),\, z\in U\}= \{(w,z)\,|\, \Re(w)=\rho^{\prime}\left(\varphi^{-1}(z)\right)-\Re\left(f(z)\right),\, z\in U\}.
\]

For the second statement, a (local) pre-K\"{a}hler symmetry is given by (local) biholomorphism $\varphi:U\subset M\to \varphi(U)\subset M$ preserving $\omega$, which transforms $\rho$ to another potential of $\omega$, and hence $\rho(z)=\rho\left(\varphi^{-1}(z)\right)-\Re(f(z))$ for some holomorphic function $f$ on $U\subset M$. The corresponding transformation $(w,z)\mapsto (w-f(z),\varphi(z))$ (locally) preserves $S$. This embedding of (local) Lie groups determines the embedding of their Lie algebras.
\end{proof}
\begin{remark}
For readers familiar with techniques in locally extending general analytic CR symmetries on real analytic submanifolds in $\mathbb{C}^N$ to local symmetries of the ambient complex space (e.g. \cite[Proposition 12.4.22]{baouendi1999cr}), it is notable that this last corollary's extension construction for lifted pre-K\"{a}hler symmetries is much simpler and even applies without restricting to the analytic category.
\end{remark}

The first statement in Corollary \ref{equivalent embeddings corollary} resolves a subtle question on inequivalent embeddings of the form \eqref{BRT embedding}. That is, while \cite[Theorem II.1, Section 1]{baouendi1985cr} gives existence of local embeddings of the form \eqref{BRT embedding}, they are far from unique. To our knowledge, the result of Corollary \ref{equivalent embeddings corollary} was previously known only in special cases, such as where the CR structure on $S$ is real analytic and Levi nondegenerate, in which case there are (local) coordinate transformations bringing both embeddings to the Chern--Moser normal form \cite{chern1974real}, after which the equipped (pseudo-)Sasakian symmetries can be aligned with an appropriate symmetry group action. 

The correspondence in Theorem \ref{thm: correspondence theorem} and Corollary \ref{equivalent embeddings corollary} allows the following important application of known results on holomorphic infinitesimal symmetries of CR hypersurfaces.
\begin{theorem}\label{finite-dimensionality theorem}
A real analytic pre-K\"{a}hler structure admitting a Freeman filtration has finite-dimensional infinitesimal symmetry algebras if and only if it is $k$-nondegenerate.
 \end{theorem}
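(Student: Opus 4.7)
The plan is to pass through the pre-K\"ahler--Sasakian correspondence (Theorem \ref{thm: correspondence theorem}) and reduce the statement to the classical finiteness theorem for infinitesimal symmetries of finitely nondegenerate real analytic CR hypersurfaces. The trichotomy is: either the Freeman filtration terminates at $0$ in finitely many steps ($k$-nondegeneracy), or it stabilizes at a nonzero distribution (holomorphic degeneracy). So it suffices to show that $k$-nondegeneracy forces finite-dimensionality and holomorphic degeneracy forces infinite-dimensionality.

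For the forward direction, assume $(M,\omega,J)$ is real analytic and $k$-nondegenerate at $p$. Pick a local potential $\rho$ and form the associated pre-Sasakian structure $(M_\rho, X_\rho)$; since $\rho$ is real analytic, so is the CR structure of $M_\rho$. The key step is to verify that the Freeman filtration $\mathcal{K}_{-1} \supset \mathcal{K}_0 \supset \cdots$ on $(M,\omega,J)$ pulls back via the quotient map $q\colon M_\rho \to M$ (by the flow of $X_\rho$) to the standard Freeman filtration of $M_\rho$ defined in \cite{freeman1977local}. This reduces to identifying $\mathcal{K}$ with the Levi kernel of $M_\rho$ (using $\omega = d\theta$ for the characteristic CR form $\theta$ satisfying $X_\rho \im \theta = 1$) and checking that the antilinear operators $\mathrm{ad}_v$ defined in \eqref{general KAO} agree, under $q_*$, with the Levi-form iterates used to define Freeman's filtration on $M_\rho$. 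Once this matching is made, $M_\rho$ is a $k$-nondegenerate real analytic CR hypersurface, so by the classical finiteness theorem (e.g.\ \cite[Theorem 12.5.3]{baouendi1999cr}) its infinitesimal CR symmetry algebra at $(\rho(p),p)$ is finite-dimensional. By Corollary \ref{equivalent embeddings corollary}, the infinitesimal pre-K\"ahler symmetry algebra of $(M,\omega,J)$ at $p$ embeds as a subalgebra into the germ at $(\rho(p),p)$ of holomorphic infinitesimal symmetries of $M_\rho$, which in the real analytic setting coincides with the CR symmetry algebra, and is therefore also finite-dimensional.

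For the converse, suppose $(M,\omega,J)$ is holomorphically degenerate at $p$. By Proposition \ref{prop: holomorphically degenerate and straightenability}, there is a neighborhood $U$ of $p$ that splits as a product $U = \widetilde{U} \times U'$ where $\widetilde{U}$ is pre-K\"ahler, $U'$ is a complex manifold of positive dimension, and $\omega|_U$ is the trivial extension from $\widetilde{U}$. For every local holomorphic vector field $Y$ on $U'$, the real vector field $Z := (0, Y + \overline{Y})$ on $\widetilde{U}\times U'$ preserves the product complex structure (since $Y$ is holomorphic) and satisfies $\mathcal{L}_Z \omega = 0$ (since $Z$ has trivial $\widetilde{U}$-component and $\omega$ is pulled back from $\widetilde{U}$). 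Hence $Z$ is an infinitesimal pre-K\"ahler symmetry, and since the space of germs of holomorphic vector fields on a positive-dimensional complex manifold is infinite-dimensional, so is the symmetry algebra.

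The main obstacle is the first step of the forward direction: verifying that the pre-K\"ahler Freeman filtration on $M$ and the CR Freeman filtration on $M_\rho$ correspond under $q_*$, together with the subtle point that the correspondence respects the regularity hypothesis (admitting a Freeman filtration in the pre-K\"ahler sense should translate to the CR analogue). Once this identification is clean, invoking the classical CR symmetry finiteness theorem together with Corollary \ref{equivalent embeddings corollary} closes the argument, and the converse is a direct construction from Proposition \ref{prop: holomorphically degenerate and straightenability}.
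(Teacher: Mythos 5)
Your proposal is correct and follows essentially the same route as the paper: reduce via the pre-K\"ahler--Sasakian correspondence and the matching of Freeman filtrations (which the paper likewise asserts by ``comparing definitions''), invoke a classical finiteness theorem for finitely nondegenerate real analytic CR hypersurfaces together with Corollary \ref{equivalent embeddings corollary} for the forward direction, and use Proposition \ref{prop: holomorphically degenerate and straightenability} to produce an infinite-dimensional family of symmetries in the converse. The only differences are cosmetic: the paper cites Stanton's theorem rather than the Baouendi--Ebenfelt--Rothschild version (the latter additionally requires finite type, which holds here since $\mathcal{K}_0\subsetneq\mathcal{K}_{-1}$ forces a nonvanishing Levi form), and the paper's infinite family consists of shears $(p,z)\mapsto(p,z+f(p))$ for holomorphic $f\colon M'\to U'$ rather than your vertical flows of holomorphic vector fields on $U'$.
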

 \begin{proof}
By \cite[Theorem 1.7]{stanton1996infinitesimal}, a real analytic CR hypersurface has an infinite-dimensional algebra of holomorphic infinitesimal symmetries if and only if it is holomorphically degenerate. Comparing definitions, it is easily seen that a pre-K\"{a}hler structure admitting a Freeman filtration is $k$-nondegenerate with $k<\infty$ if and only if its associated CR hypersurface is $k$-nondegenerate (and thereby not holomorphically degenerate). So $k$-nondegeneracy implies finite-dimensionality by Corollary \ref{equivalent embeddings corollary}.

Conversley, if $(M,\omega,J)$ is not $k$-nondegenerate at $p$ for some $k<\infty$ then, by Proposition \ref{prop: holomorphically degenerate and straightenability}, there is a neighborhood $U\subset M$ of $p$, a neighborhood $U^\prime\subset \mathbb{C}$, and another pre-K\"{a}hler manifold $(M^\prime,\omega^\prime,J^\prime)$ such that $(U,\omega,J)$ is equivalent to $M^\prime\times U^\prime$ equipped with a pre-symplectic form given by trivially extending $\omega^\prime$. Every (locally) holomorphic function $f:M^\prime\to U^\prime$ determines a different (local) symmetry $(p,z)\mapsto (p,z+f(p))$ of $M^\prime\times U^\prime$, so the infinitesimal symmetry algebra at a point $p\in U\subset M$ is infinite-dimensional.\end{proof}

 \begin{remark}\label{CR FF VS PK FF}
In \cite{freeman1977local}, a filtration is introduced on CR manifolds analogous to \eqref{freeman filtration on preKahler}. On a CR manifold $(S,D,J)$,  one takes $\tilde{\mathcal{K}}_{-1}:=\mathbb{C}TD\cap T^{1,0}N$, defines $\tilde{\mathcal{K}}_0$ to be the kernel of the Levi form in $\tilde{\mathcal{K}}_{-1}$, and then defines the rest of the sequence $\tilde{\mathcal{K}}_{-1}\supset\tilde{\mathcal{K}}_{0}\supset\tilde{\mathcal{K}}_{1}\supset\cdots$ in terms of $\tilde{\mathcal{K}}_{-1}$ and $\tilde{\mathcal{K}}_0$ using exactly the same formulas we used to define $\mathcal{K}_{-1}\supset\mathcal{K}_{0}\supset\mathcal{K}_{1}\supset\cdots$ in terms of $\mathcal{K}_{-1}$ and $\mathcal{K}_0$. Freeman's filtration is invariant under CR symmetries and in particular invariant under a pre-Sasakian structure's distinguished CR symmetry. Thus, Freeman's filtration on a pre-Sasakian structure descends to a filtration on the pre-K\"{a}hler structure associated to it via  the correspondence in Theorem \ref{thm: correspondence theorem}. The latter filtration exactly coincides with  filtration  \eqref{freeman filtration on preKahler} on the pre-K\"{a}hler structure. 
 \end{remark}
 
 \begin{remark}\label{k-nond. through the correspondence}
 A more general notion of $k$-nondegeneracy for CR structures is given in \cite{baouendi1999cr}, which is well-posed even without assuming the existence of the regular Freeman filtration. In light of Theorem \ref{thm: correspondence theorem}, one can naturally define an analogous property for pre-K\"{a}hler structures in terms of the CR structure underlying the pre-Sasakian structure associated to each point.
 \end{remark}
 
 \subsection{Pre-symplectification of pre-Sasakian structures}\label{sec: presymplectification section}
So far we have observed a correspondence between $(2n+1)$-dimensional pre-Sasakian structures and $2n$-dimensional pre-K\"{a}hler structures. For actual Sasakian and K\"{a}hler structures there is also a well-known natural construction of $(2n+2)$-dimensional K\"{a}hler structures from $(2n+1)$-dimensional Sasakian structures. The Sasakian structures' underlying CR distributions are contact and the higher-dimensional manifolds on which these K\"{a}hler structures are constructed are defined by  the \emph{symplectification of contact manifolds} described in  \cite[Appendix 4.E]{arnold2013mathematical}. This \emph{symplectification} naturally generalizes to a \emph{pre-symplectification}, e.g. see \cite{grabowska2023reductions}, producing $(2n+2)$-dimensional pre-symplectic manifolds from pre-contact manifolds. Let us review it.

Let $S$ be a $(2n+1)$-dimensional manifold with codimension $1$ distribution $D\subset TS$. Given a $1$-form ${\theta}$ that annihilates $D$ locally on some neighborhood $U\subset S$, all other such $1$-forms on $U$ have the form $f{\theta}$ for some nowhere zero function $f: U\subset S\to \mathbb{R}$. Let $\pi:\hat M_U\to U$ be the fiber bundle over $U$ whose fiber over a point $p\in U$ consists of all nonzero multiples of $\left.{\theta}\right|_p$. Hence, the $1$-forms annihilating $D$  on $U$ are sections of $\hat M_U$. There is a canonical 1-form $\hat{\theta}_U$ on $\hat M_U$ given by 
\[
\hat{\theta}_U(X):=t\theta\big(\pi_*(X)\big)
\quad\quad\forall\, X\in T_{t\theta}\hat M_U,\,0\neq t\in \mathbb{R},
\]
as this definition does not depend on the original choice of $\theta$. Given another neighborhood $U^\prime\subset S$ with a $1$-form annihilating $D$, both $\left.\hat M_U\right|_{U\cap U^\prime}$ and $\left.\hat M_{U^\prime}\right|_{U\cap U^\prime}$ are naturally identified  with $\hat M_{U\cap U^\prime}$ by construction, and  $\left.\hat\theta_U\right|_{\hat M_{U\cap U^\prime}}$ and $\left.\hat\theta_{U^\prime}\right|_{\hat M_{U\cap U^\prime}}$ are naturally identified as well in the obvious way. Using such identifications to patch together various $\hat M_U$ defined for different $U$, we get a canonical bundle $\pi:\hat M\to S$ equipped with a canonical $1$-form $\hat{\theta}$. This bundle is referred to as a \emph{pre-symplectic cover} in \cite{grabowska2023reductions}, and it retracts to a $2$-sheeted covering of $S$. 
The differential $\exd\hat{\theta}$ is closed and is therefore a pre-symplectic form on $\hat M$. We call $(\hat M,\exd\hat{\theta})$ the \emph{pre-symplectification} of $(S,D)$.

Since the pre-symplectification of pre-contact structures is well-posed in general, when the procedure is applied to a pre-Sasakian manifold we are left to consider if the pre-symplectification admits a canonical compatible almost complex structure.
\begin{remark}
The  underlying pre-contact distribution $D$ of a pre-Sasakian structure $(S,D,J,X)$ is co-oriented by $X$, which distinguishes a subbundle $\hat M_U^+=\{\phi\in \hat M_U\,|\, \phi(X)>0\}$. This subbundle deformation retracts onto a $1$-sheeted cover of $S$.
\end{remark}

Now, suppose $(S,D,J,X)$ is a pre-Sasakian manifold with  $J:D\to D$ the almost complex structure defining the underlying CR structure and $X$ the distinguished infinitesimal symmetry. And still let $(\hat M,\exd\hat{\theta})$ be the pre-symplectification of $(S,D)$. For $t>0$, let ${\theta}_t$ be the $1$-form on $S$ annihilating $D$ and satisfying ${\theta}_t(X)=t$. Such $1$-forms are easily found, even in explicit coordinates using  \cite[Theorem II.1, Section 1]{baouendi1985cr} to describe  $(S,D,J,X)$ in local coordinates of the form \eqref{BRT embedding}, and clearly $\theta_t$ is unique as scaling $\theta_t$ would break ${\theta}_t(X)=t$. In particular, if $(S,D,J,X)$  is locally realized as a hypersurface in $\mathbb{C}^{n+1}$ given by 
\begin{equation}\label{BRT embedding a}
\{(u+iv,z)\in \mathbb{C}\oplus\mathbb{C}^n\,|\,u=\rho(z)\}
\end{equation}
for some real-valued $\rho$ with $X=\frac{\partial}{\partial v}$, then one has
\begin{equation}\label{canonical pre-symplectification 1form}
{\theta}_t=\left.\operatorname{i}t(\partial-\bar\partial)(-u+\rho)\right|_{u=\rho(z)}=\left.\left(t\,dv+\operatorname{i}t(\partial-\bar\partial)\rho\right)\right|_{u=\rho(z)}
\quad\quad\forall\, t>0.
\end{equation}
Taking $(v,z,t)$ as local coordinates for $\hat M$, the canonical $1$-form on $\hat M$ is
\[
\hat{\theta}=t\,\pi^*(dv)+\operatorname{i}t\pi^*\left((\partial-\bar\partial)\rho\right),
\]
and hence the pre-symplectic form on $\hat M$ is
\begin{equation}\label{canonical pre-symplectification form}
\exd\hat{\theta} = dt\wedge\pi^* \left(dv +\operatorname{i}(\partial-\bar\partial)\rho\right)-2\operatorname{i}t\pi^*\left((\partial\bar\partial)\rho\right)=dt\wedge \pi^*\theta_1+t\pi^*\left(\exd\theta_1\right),
\end{equation}
where $\pi^*$ denotes the pullback by $\pi$.

Each $\theta_t$ defines a section of $\hat M$ and $\hat M$ is foliated by such sections. Therefore, we have a canonical connection on the bundle $\pi:\hat M\to S$ given by taking the tangent spaces of this foliation's leaves to be the connection's horizontal spaces in $T\hat M$. Let
\[
T\hat M=V\hat M\oplus H\hat M
\]
denote this splitting of $T\hat M$ into vertical and horizontal tangent spaces. Every vector field $W$ on $S$ lifts  to a horizontal  vector field $\hat{W}\in \Gamma(H\hat M)$ defined by 
\[
\pi_*\hat{W}_{x}=W_{\pi(x)}
\quad\mbox{ and }\quad
\hat{W}_{x}\in H_x\hat M
\quad\quad\forall x\in \hat M.
\]
Similarly, the distribution $D$ also lifts to a horizontal distribution $\hat D\subset H\hat M$ and $J$ lifts to an almost complex structure operator on $\hat D$ by defining $\hat{J}(\hat{V})=\widehat{J(V)}$ for all $V\in \Gamma(D)$. We can extend $\hat J$ to an almost complex structure on $\hat M$ by prescribing how the extension is defined on the lift of $X$. For this, let $\hat{Y}\in \Gamma(V\hat M)$ be the vertical vector field such that $\exd\hat\theta(\hat{Y},\hat{X})=1$ and define
\[
\hat{J}(\hat{X})=-\hat{Y}.
\]
In coordinates \eqref{canonical pre-symplectification form}, $\hat{Y}$ takes the simple form $\hat{Y}=\tfrac{\partial}{\partial t}$.

\begin{proposition}\label{prop: presymplectification prop}
Let $(S,D,J,X)$ be a pre-Sasakian manifold, and let $(\hat M,\exd\hat{\theta})$, $\hat{J}$, $\hat{Y}$ be defined as above. The pre-symplectic form $\exd\hat{\theta}$ is type $(1,1)$  with respect to $\hat{J}$. That is, $(\hat M,\exd\hat{\theta})$ is a pre-K\"{a}hler structure. If $(S,D,J,X)$ is uniformly $k$-nondegenerate then so is $(\hat M,\exd\hat{\theta})$.
\end{proposition}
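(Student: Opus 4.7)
The proof naturally splits into verifying (a) that $\exd\hat\theta$ is a real closed $(1,1)$-form with respect to $\hat J$, and (b) that the Freeman filtration on $\hat M$ translates the CR Freeman filtration on $S$ described in Remark \ref{CR FF VS PK FF}.

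\emph{Pre-K\"ahler property.} Starting from the embedded description of $S$ in \eqref{BRT embedding a}, the lift of $T^{1,0}S\cap\mathbb{C}D$ to $\hat M$ is spanned by the horizontal vector fields $\hat V_j = \partial_{z^j} - \operatorname{i}\rho_{z^j}\partial_v$, and $T^{1,0}\hat M$ is obtained by adjoining $Z := \hat X + \operatorname{i}\hat Y = \partial_v + \operatorname{i}\partial_t$. Direct bracket calculation yields $[\hat V_j, \hat V_k] = [\hat V_j, Z] = 0$, so $T^{1,0}\hat M$ is involutive and $\hat J$ is integrable by Newlander--Nirenberg; indeed $(v,z,t)\mapsto (v - \operatorname{i}t - \operatorname{i}\rho(z,\bar z),z)$ provides explicit holomorphic coordinates on $\hat M$. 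To check that $\exd\hat\theta$ has type $(1,1)$, I would verify $\exd\hat\theta(\hat JU,\hat JW) = \exd\hat\theta(U,W)$ case-by-case on the splitting $T\hat M = \hat D \oplus \mathbb{R}\hat X \oplus \mathbb{R}\hat Y$ using \eqref{canonical pre-symplectification form}. On $\hat D\times\hat D$, invariance follows from $\exd\theta_1|_D$ being the $J$-invariant Levi form. The pairings $\exd\hat\theta(\hat D,\hat X)$ and $\exd\hat\theta(\hat D,\hat Y)$ both vanish thanks to $\iota_X\exd\theta_1 = \mathcal{L}_X\theta_1 - \exd\bigl(\theta_1(X)\bigr) = 0$, a consequence of $X$ being a CR symmetry normalized by $\theta_1(X) = 1$, and $\exd\hat\theta(\hat X,\hat Y) = -1$ is manifestly preserved by $\hat J(\hat X) = -\hat Y$, $\hat J(\hat Y) = \hat X$.

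\emph{Preservation of $k$-nondegeneracy.} This follows by induction from the identification $\mathcal{K}_j = \hat{\tilde{\mathcal{K}}}_j$ for each $j \geq 0$, where $\hat{\tilde{\mathcal{K}}}_j$ denotes the horizontal lift of the CR Freeman distribution $\tilde{\mathcal{K}}_j$ on $S$. For the base case, a general $(1,0)$ vector $\sum c_j \hat V_j + eZ$ lies in $\mathcal{K}_0$ if and only if pairing against $\bar Z$ vanishes, which forces $e=0$ since $\exd\hat\theta(Z,\bar Z) = 2\operatorname{i}$, and pairing against $\overline{\hat V_k}$ reproduces the Levi kernel condition on $S$. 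For the inductive step, the connection one-form $\exd t$ satisfies $\exd(\exd t) = 0$, so horizontal lifts have horizontal brackets and $[\hat V, \overline{\hat U}] = \widehat{[V,\overline U]}$ for $V \in \tilde{\mathcal{K}}_{j-1}$ and $U \in \tilde{\mathcal{K}}_{-1}$; the containment $\tilde{\mathcal{K}}_{j-1}\subset\tilde{\mathcal{K}}_0$ together with the vanishing of the Levi form then forces $[V,\overline U] \in \mathbb{C}D$. Under this identification, $\mathrm{ad}_{\hat V}$ restricted to $\overline{\hat D^{1,0}}$ reproduces the CR operator $\tilde{\mathrm{ad}}_V$, while $\mathrm{ad}_{\hat V}(\bar Z) = 0$ holds trivially because $\hat V$'s coefficients are independent of $v$ and $t$. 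Consequently $\mathcal{K}_j^{\hat M} = 0$ if and only if $\tilde{\mathcal{K}}_j^S = 0$, and the two nondegeneracy orders coincide.

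The most delicate bookkeeping issue is that $\mathcal{K}_{-1}^{\hat M} = T^{1,0}\hat M$ contains the extra direction $\mathbb{C}Z$ absent from $\tilde{\mathcal{K}}_{-1}^S$, so the first-step quotient $\mathcal{K}_{-1}/\mathcal{K}_0$ carries one additional complex dimension. Preventing this bonus class from propagating through the filtration reduces to the identity $\mathrm{ad}_v(\bar Z) = 0$ for every $v\in\mathcal{K}_0$, which is precisely the infinitesimal statement that $X$ is a CR symmetry of $S$. Once this is in hand, all remaining computations are routine consequences of the coordinate formulas.
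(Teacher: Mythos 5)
Your proof is correct and follows essentially the same route as the paper: the paper's argument is exactly the case-by-case verification of $\hat J$-invariance of $\exd\hat\theta$ on the basis adapted to the splitting $\hat D\oplus\langle\hat X\rangle\oplus\langle\hat Y\rangle$, together with the observation that the CR Freeman filtration lifts horizontally to the pre-K\"ahler one; you have simply written out the computations (including the integrability of $\hat J$ and the key identity $[\hat V,\bar Z]=0$ that keeps the extra $(0,1)$-direction from disturbing the filtration) that the paper leaves as "straightforward."
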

\begin{proof}
Take a basis 
\begin{equation}\label{basis for presymplectification prop}
(\hat{e_1},\ldots, \hat{e_{2n}},\hat{X}_x,\hat{Y}_x)
\end{equation}
comprised of the lifts of some basis $(e_1,\ldots, e_{2n})$ of $D_{\pi(x)}\subset S$ to horizontal vectors $(\hat{e_1},\ldots, \hat{e_{2n}})$ in $H_x\hat M$ and the vector fields $\hat{X}$ and $\hat{Y}$, as described above. It is a straightforward computation to verify $\exd\hat{\theta}(v,w)=\exd\hat{\theta}(\hat Jv,\hat Jw)$ for all $v$ and $w$ in this basis. Uniform $k$-nondegeneracy promotes because the Freeman filtration on $(S,D,J,X)$  (Remark \ref{CR FF VS PK FF}) lifts to the Freeman filtration on $(\hat M,\exd\hat{\theta})$.
\end{proof}

\subsection{Low order jets of finitely-nondegenerate potentials}\label{sec: Low order jets of finitely-nondegenerate potentials}

Relating a pre-K\"{a}hler structure's  potentials to CR structures underlying its associated Sasakian manifolds, as discussed in \ref{sec: Symmetry reduction and pre-Sasakian structures}, one can apply  \cite[Corollary 11.2.14]{baouendi1999cr} to conclude the following.
\begin{proposition}\label{prop: low-order finite nondeg consequenes}
Let $\rho:U\subset\mathbb{C}^n\to \mathbb{R}$ be a local potential of a pre-K\"{a}hler structure expressed in some local coordinates $(z^1,\ldots, z^n)$ and let $\rho_z$ denote the length $n$ vector of partial derivatives $\frac{\partial \rho}{\partial z^j}$. If the structure is uniformly  $k$-nondegenerate then
\begin{equation}\label{general knond}
\mathbb{C}^n=\spn\left\{\left.\frac{\partial}{\partial \overline{z^{\eta_{1}}}}\cdots \frac{\partial}{\partial \overline{z^{\eta_{j}}}}\rho_z(0)\,\right|\, j\leq k,\,1\leq \eta_\ell\leq n\right\}.
\end{equation}
\end{proposition}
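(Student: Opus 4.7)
My plan is to reduce the statement directly to the characterization of $k$-nondegeneracy of CR hypersurfaces in terms of defining functions, via the pre-K\"ahler--Sasakian correspondence established in Theorem \ref{thm: correspondence theorem}. Given the potential $\rho$, I would first form the associated CR hypersurface
\[
M_\rho=\{(w,z)\in\CC\oplus\CC^n\,:\,\Re w=\rho(z)\}
\]
with its distinguished transverse symmetry $X_\rho=\partial/\partial v$, as in \ref{sec: Symmetry reduction and pre-Sasakian structures}. By Theorem \ref{thm: correspondence theorem} and Remark \ref{CR FF VS PK FF}, the Freeman filtration on $(M,\omega,J)$ coincides with the descent of Freeman's CR filtration on $(M_\rho,X_\rho)$. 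Hence uniform $k$-nondegeneracy of $(M,\omega,J)$ is equivalent to uniform $k$-nondegeneracy of the CR structure on $M_\rho$ at the corresponding point.

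Next, I would invoke \cite[Cor.~11.2.14]{baouendi1999cr}, which characterizes $k$-nondegeneracy of a real hypersurface at a point via the span of iterated $(0,1)$ derivatives of the components of $\partial r$, where $r$ is a defining function. Taking $r=w+\ov w-2\rho(z,\ov z)$ as the defining function of $M_\rho$, a basis of $T^{0,1}M_\rho$ adapted to the coordinates is
\[
\ov L_j=\tfrac{\partial}{\partial\ov{z^j}}+2\tfrac{\partial\rho}{\partial\ov{z^j}}\tfrac{\partial}{\partial\ov w},\qquad j=1,\dots,n,
\]
and $\partial r|_{M_\rho}$ reduces, modulo the characteristic direction $\operatorname{Im}\partial r$, to $-2\sum_j\tfrac{\partial\rho}{\partial z^j}\exd z^j$. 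Iterated application of the $\ov L_{\eta_\ell}$ to the coefficient vector $(\rho_{z^1},\dots,\rho_{z^n})$ produces exactly $\tfrac{\partial}{\partial\ov{z^{\eta_1}}}\cdots\tfrac{\partial}{\partial\ov{z^{\eta_j}}}\rho_z$ evaluated at the point, since only the first term of $\ov L_{\eta_\ell}$ contributes (the $\partial_{\ov w}$ part annihilates $\rho_z$, which is independent of $w,\ov w$). So the BER $k$-nondegeneracy span condition on $M_\rho$ at the origin translates verbatim into the asserted equality \eqref{general knond}.

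The only subtlety — and the main place to be careful — is the bookkeeping of conventions between the BER formulation (iterated commutators of $(0,1)$ CR vector fields acting on a $(1,0)$ coframe representing $\partial r$ modulo the characteristic bundle) and the explicit coordinate form above. Once one observes that the lifts $\ov L_j$ reduce on $\rho_z$ to $\partial/\partial\ov{z^j}$, the translation is automatic and produces \eqref{general knond} directly. No analytic regularity beyond smoothness is needed since both $k$-nondegeneracy notions are defined pointwise on the relevant jets.
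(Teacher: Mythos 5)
Your proposal is correct and follows essentially the same route as the paper: the paper's entire argument for this proposition is the one-sentence observation that one relates the potential to the CR hypersurface $\{\Re(w)=\rho(z)\}$ via the correspondence of \ref{sec: Symmetry reduction and pre-Sasakian structures} and then applies \cite[Corollary 11.2.14]{baouendi1999cr}. Your explicit computation with the CR fields $\ov L_j$ and the reduction of the iterated derivatives to $\tfrac{\partial}{\partial\ov{z^{\eta_1}}}\cdots\tfrac{\partial}{\partial\ov{z^{\eta_j}}}\rho_z$ simply fills in the bookkeeping the paper leaves implicit.
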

In fact, the proposition does not require \emph{uniform} $k$-nondegeneracy, but states that \eqref{general knond} holds if and only if the CR structure on $\{(w,z)\,|\,\R(w)=\rho\}$ satisfies a weaker pointwise definition of $k$-nondegeneracy at $0$ given in \cite[Chapter 11.1]{baouendi1999cr}. The stronger \emph{uniform} $k$-nondegeneracy explored in the present article has strong additional implications for the $(k+1)$-jet of $\rho$, which we will derive now.

Fix an arbitrary point $p\in M$ and assume $(M,\omega, J)$ is a uniformly $k$-nondegenerate $2n$-dimensional pre-K\"{a}hler structure with Freeman filtration 
\[
\mathcal{K}_{-1}=T^{1,0}M\supset\mathcal{K}=\mathcal{K}_0\supset\cdots\subset \mathcal{K}_{k-2}\supset \mathcal{K}_{k-1}=0.
\] 
Let $\rho:U\subset M\to \mathbb{R}$ be a local potential of $\omega$ in a neighborhood of $p\in U\subset M$ with $\rho(p)=0$, and let $(S,D,J,X)$ denote the pre-Sasakian structure associated $p\in M$ having Freeman filtration  
\[
\tilde{\mathcal{K}}_{-1}=\mathbb{C}D\cap T^{1,0}S\supset\tilde{\mathcal{K}}_0\supset\cdots\subset\tilde{\mathcal{K}}_{k-2}\supset \tilde{\mathcal{K}}_{k-1}=0,
\]
as described in Remark \ref{CR FF VS PK FF}. Label filtration level dimensions by
\begin{equation}\label{FF levels dimensions}
r_{j}:=\mathrm{rank}(\tilde{\mathcal{K}}_{j})-\mathrm{rank}(\tilde{\mathcal{K}}_{j+1})
\quad\mbox{ and }\quad
R_j:=\sum_{i=-1}^jr_i
\quad\quad\forall\, j=-1,\ldots,k-2.
\end{equation}
The correspondence of Theorem \ref{thm: correspondence theorem} identifies $(S,D,J,X)$ with the real hypersurface
\[
(S,D,J)=\{(w,z)\in\mathbb{C}\oplus\mathbb{C}^n\,|\, \Re(w)=\rho(z),\, z\in U\}
\]
equipped with the distinguished symmetry $X=\frac{\partial}{\partial v}$, where $w=u+\operatorname{i}v$.

If $\left.\exd \rho\right|_0\neq 0$, then a change of coordinates of the form $(w,z)\mapsto (w+L(z),z)$ with $L:\mathbb{C}^n\to \mathbb{C}^n$ linear will  preserve $\frac{\partial}{\partial v}$ and can force $\left.\exd \rho\right|_0= 0$. By changing coordinates further via a linear transformation applied to the $z$ coordinates if necessary, we can assume
\[
\left.\exd \rho\right|_0= 0
\quad\mbox{ and }\quad
\left.\tilde{\mathcal{K}}_j\right|_0=\spn\left\{\left.\left.\frac{\partial}{\partial z^{\ell}}\right|_{0}\right|\,\ell>R_j-r_{j}\right\}
\quad\quad\forall\, j.
\]
As a remark for later, since Freeman's filtration on $(S,X,J)$ is a CR invariant, it is invariant under flows of $X$, and therefore
\begin{equation}\label{coordinates adapted to FF}
\left.{\mathcal{K}}_j\right|_0=\spn\left\{\left.\left.\frac{\partial}{\partial z^{\ell}}\right|_{0}\right|\,\ell>R_j-r_{j}\right\}
\quad\quad\forall\, j
\end{equation}
as well.

For each $-1\leq j<k-1$ let $(X^j_1,\ldots, X^{j}_{r_j})$ be a set of vector fields in $T^{1,0}S$ spanning $\tilde{\mathcal{K}}_j/\tilde{\mathcal{K}}_{j+1}$ in a neighborhood of $0$ satisfying
\begin{equation}\label{adapted FF basis}
\left.X_\ell^j\right|_0 = \left.\frac{\partial}{\partial z^{R_j-r_j+\ell}}\right|_{0}.
\end{equation}
To describe such $X_\ell^j$ in greater detail, note that the vector fields
\[
V_{j}:=\frac{\partial}{\partial z^j}+\frac{\partial \rho}{\partial z^j} \frac{\partial}{\partial w}
\quad\quad\forall j=1,\ldots, n
\]
span $\tilde{\mathcal{K}}_{-1}=\mathbb{C}D\cap T^{1,0}S$ at every point on $S$ (see, e.g. \cite[Chapter 1.6]{baouendi1999cr}). Therefore, one has
\begin{equation}\label{adapted FF basis in detail}
X^j_\ell=V_{R_j-r_j+\ell}+\sum_{i=R_j-r_j+1}^n\alpha_i^j(z) V_i
\end{equation}
for some $\alpha_i^j\in C^\infty(\mathbb{C}^{n})$ with $\alpha_i^j(0)=0$. The reason the sums in \eqref{adapted FF basis in detail} start from $i=R_j-r_j+1$ is that $\left.V_i\right|_0\not\in \tilde{\mathcal{K}}_j$ for small $i$. And the reason $\alpha_i^j$ need not depend on $w$ is that $\frac{\partial}{\partial v}$ is a symmetry, so we might as well work with a basis invariant under translations by $v$.  For simplicity, let us moreover assume 
\begin{equation}\label{adapted FF basis in detail alt}
X^{-1}_j=V_j \quad\quad\forall\, j=1,\ldots, r_{-1}.
\end{equation}

\begin{proposition}\label{prop: leading terms of k nond def eqn lemma}
For any $X_\ell^j$ as constructed above, there exist $j+2$ possibly repeating indices $\eta_1,\ldots, \eta_{j+2} \in \{1,\ldots, r_{-1}\}$ such that 
\begin{equation}\label{leading terms for k-nond def eqn}
\begin{aligned}
\rho_{\overline{\eta_1},\ldots,\overline{\eta_{j+2}},R_j-r_j+\ell}(0):=&\frac{\partial}{\partial \overline{z^{\eta_{1}}}}\cdots \frac{\partial}{\partial \overline{z^{\eta_{j+2}}}}\frac{\partial}{\partial z^{R_j-r_j+\ell}}\rho(0)\\
=&\left.\left[\cdots\left[X_\ell^j,\overline{X^{-1}_{\eta_1}}\right],\cdots,\overline{X^{-1}_{\eta_{j+2}}}\right](w)\right|_{0}\neq0.
\end{aligned}
\end{equation}
For any other multi-index $\nu=(\nu_1,\ldots, \nu_s)$ with $\nu_1,\ldots, \nu_s \in \{1,\ldots, r_{-1}\}$ and length $|\nu|=s<j+2$
\begin{equation}\label{leading terms for k-nond def eqn alt}
\rho_{\overline{\nu_1},\ldots,\overline{\nu_s},R_j-r_j+\ell}(0)=\left.\left[\cdots\left[X_\ell^j,\overline{X^{-1}_{\nu_1}}\right],\cdots,\overline{X^{-1}_{\nu_s}}\right](w)\right|_{0}=0.
\end{equation}
\end{proposition}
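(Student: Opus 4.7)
The plan is to work in the ambient $\mathbb{C}^{n+1}$, where $V_a = \partial_{z^a} + \rho_{z^a}\partial_w$ and the extended $X^j_\ell$ live as vector fields, and to base everything on the elementary identity $[V_a,\overline{V_b}] = \rho_{z^a\bar z^b}(\partial_{\bar w}-\partial_w)$. Iterating via the Leibniz rule and using the $w,\bar w$-independence of $\rho$ yields
\[
[\cdots[V_a,\overline{V_{b_1}}],\cdots,\overline{V_{b_s}}] = (-1)^{s-1}\rho_{z^a\bar z^{b_1}\cdots\bar z^{b_s}}(\partial_{\bar w}-\partial_w).
\]
Together with $(\partial_{\bar w}-\partial_w)(w) = -1$, $V_i(w)|_0 = \rho_{z^i}(0) = 0$ (from $\left.\exd\rho\right|_0 = 0$), and $\overline{V_i}(w) \equiv 0$, this shows that any vector field lying in $T^{1,0}S + T^{0,1}S$ annihilates $w$ at $0$. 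The vanishing claim \eqref{leading terms for k-nond def eqn alt} is then immediate: $X^j_\ell \in \tilde{\mathcal{K}}_j$ forces the $s$-fold iterated bracket with elements of $T^{0,1}S$ to lie in $\tilde{\mathcal{K}}_{j-s} + T^{0,1}S \subseteq T^{1,0}S + T^{0,1}S$ for $s \le j+1$.

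For the identity \eqref{leading terms for k-nond def eqn} I would first establish a \emph{Key Lemma}: for every $m \ge 1$, every $k > R_{m-2}$, and every multi-index $(b_1,\ldots,b_m)$ in $\{1,\ldots,r_{-1}\}$, one has $\rho_{z^k\bar z^{b_1}\cdots\bar z^{b_m}}(0) = 0$. This is proven by induction on $m$: the base case $m=1$ is the Levi-kernel identity, and for the inductive step, given $k > R_{m-1}$ pick a section $Z \in \tilde{\mathcal{K}}_{m'}$ at the appropriate level $m' \ge m$ with $Z|_0 = \partial_{z^k}|_0$ (available by the coordinate adaptation \eqref{coordinates adapted to FF}), expand $Z = V_k + \sum_i \alpha^{m'}_i V_i$ in the $(m+1)$-fold bracket, use $Z \in \tilde{\mathcal{K}}_m$ to force the $(\partial_{\bar w}-\partial_w)$-coefficient of that bracket to vanish identically in $z$, evaluate at $z=0$, and invoke the inductive hypothesis to kill the correction terms -- each a shorter $\rho$-derivative $\rho_{z^i\bar z^{\cdot}\cdots\bar z^{\cdot}}(0)$ with $i > R_{m-1} > R_{m-2}$ and bracket length at most $m$. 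Granted the Key Lemma, for \eqref{leading terms for k-nond def eqn} expand $X^j_\ell = V_{R_j-r_j+\ell} + \sum_i \alpha^j_i V_i$ in the $(j+2)$-fold bracket via $[\alpha Y,Z] = \alpha[Y,Z] - Z(\alpha)Y$: the main term contributes the claimed $\rho$-derivative by the pure-bracket formula, while each surviving correction term at $z=0$ pairs a derivative of $\alpha^j_i$ with an inner iterated bracket of length at most $j+1$ starting from $V_i$ with $i > R_j - r_j = R_{j-1}$, which by the pure-bracket formula equals a $\rho$-derivative killed by the Key Lemma. The nonvanishing claim follows from $X^j_\ell \notin \tilde{\mathcal{K}}_{j+1}$ by successively choosing $\eta_1,\ldots,\eta_{j+2}$ so that $\mathrm{ad}_{X^j_\ell}$ escapes the next filtration level at each stage, leaving a nonzero $(\partial_{\bar w}-\partial_w)$-component at the end.

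The main obstacle is the combinatorial bookkeeping in both the Key Lemma's inductive step and in the final expansion: one must track, in each Leibniz expansion, which of the $\overline{V_{\cdot}}$'s differentiate the $\alpha$-coefficients and which form the inner pure bracket, verify at each stage that the inner bracket's length and starting-index range match the scope of the inductive hypothesis, and exploit the fact that the sum in $X^j_\ell = V_{R_j-r_j+\ell} + \sum_i \alpha^j_i V_i$ is indexed by $i > R_{j-1}$ precisely so that the Key Lemma's hypothesis on $k$ is met.
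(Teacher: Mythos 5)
Your proposal follows essentially the same route as the paper's proof: both work with the graph vector fields $V_a=\partial_{z^a}+\rho_{z^a}\tfrac{\partial}{\partial w}$, expand $X^j_\ell=V_{R_j-r_j+\ell}+\sum_i\alpha^j_iV_i$ as in \eqref{adapted FF basis in detail}, read off the $\tfrac{\partial}{\partial w}$-component of the iterated brackets, and play membership in $\tilde{\mathcal{K}}_j$ against escape from $\mathbb{C}D$ to get \eqref{leading terms for k-nond def eqn alt} and \eqref{leading terms for k-nond def eqn}. The one genuine difference is how the correction terms arising from the $\alpha^j_iV_i$ summands are disposed of: the paper does this by a recursive descent ("either the derivative vanishes or there is a shorter multi-index with another index $i'$\ldots") terminating by exhaustion of indices, whereas you isolate the needed fact as an explicit Key Lemma, $\rho_{z^k\bar z^{b_1}\cdots\bar z^{b_m}}(0)=0$ whenever $k>R_{m-2}$, proved by induction on the bracket length $m$. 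That packaging is cleaner than the paper's recursion and the induction does close up: the index range $i>R_{j-1}$ in \eqref{adapted FF basis in detail}, together with monotonicity of $R_j$, is exactly what makes the inductive hypothesis applicable to every surviving correction term, and terms in which no $\overline{V_\cdot}$ hits an $\alpha$-coefficient die because $\alpha^j_i(0)=0$ and $\exd\rho|_0=0$.

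The one place your argument is too thin is the nonvanishing claim. You assert that the escape from $\mathbb{C}D$ can be witnessed by successively chosen $\eta_1,\ldots,\eta_{j+2}\in\{1,\ldots,r_{-1}\}$, but the definition of the filtration only guarantees \emph{some} witnesses $\overline{Y_1},\ldots,\overline{Y_{j+2}}$ with $Y_s\in\tilde{\mathcal{K}}_{-1}$, and a priori some of these could lie in the Levi kernel $\tilde{\mathcal{K}}_0$, i.e.\ outside the index range $\{1,\ldots,r_{-1}\}$. The paper closes this by noting that the $(j+2)$-linear map $(Y_1,\ldots,Y_{j+2})\mapsto[\cdots[X^j_\ell,\overline{Y_1}],\ldots,\overline{Y_{j+2}}]\bmod\mathbb{C}D$ is symmetric, reordering so that any $\tilde{\mathcal{K}}_0$-witnesses come last, and then using $[\overline{\tilde{\mathcal{K}}_0},\mathbb{C}D]\subset\mathbb{C}D$ to rule them out entirely; you need this short argument (or an equivalent) before your "successive escape" is grounded. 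Finally, note that with your normalization $(\partial_{\bar w}-\partial_w)(w)=-1$ the pure bracket applied to $w$ comes out as $(-1)^{j+2}\rho_{\overline{\eta_1},\ldots,\overline{\eta_{j+2}},R_j-r_j+\ell}(0)$, so the literal equality in \eqref{leading terms for k-nond def eqn} holds only up to sign; this defect is shared with the paper's own computation and is harmless for the nonvanishing conclusion and for Corollary \ref{corol: leading terms of k nond def eqn}.
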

\begin{proof}
Since $X_\ell^j\in \tilde{\mathcal{K}}_j$, by the definition of $\tilde{\mathcal{K}}_j$ there exist $j+1$ possibly repeating vector fields $Y_1,\ldots, Y_{j+2}$ from the $\tilde{\mathcal{K}}_{-1}$ basis \eqref{adapted FF basis} such that $[\cdots[X_\ell,\overline{Y_1}],\ldots,\overline{Y_{j+2}}]$ is not contained in $\mathbb{C}D$. The mapping $(X_1,\ldots, X_{j+2})\mapsto [\cdots[X_\ell,\overline{X_1}],\ldots,\overline{X_{j+2}}]_0\pmod{\mathbb{C}D_0}$ from $\bigotimes^{j+2} \Gamma(\mathbb{C}D\cap T^{1,0}S) \to \mathbb{C}T_0S/\mathbb{C}D_0$ is symmetric, so by possibly reordering $Y_1,\ldots, Y_{j+2}$ we can assume without loss of generality that all of the $Y_s$ belonging to $\Gamma(\tilde{\mathcal{K}}_0)$ are enumerated last. 

Again by the definition of $\tilde{\mathcal{K}}_j$, one has $[\cdots[X_\ell,\overline{Y_1}],\ldots,\overline{Y_{s}}]\in \Gamma(\mathbb{C}D)$ for any $s<j+2$. This implies that the $\frac{\partial}{\partial w}$ part of Lie bracket  in \eqref{leading terms for k-nond def eqn alt} is $0$ at $0$ since $\mathbb{C}D$ is spanned by $\{\frac{\partial}{\partial z^i},\frac{\partial}{\partial \overline{z^i}}\}$ there. Computing this $\frac{\partial}{\partial w}$ part directly using  \eqref{adapted FF basis in detail}  and \eqref{adapted FF basis in detail alt},  we find that its coefficient is 
\[
0=\left.\rho_{\overline{\nu_1},\ldots,\overline{\nu_s},R_j-r_j+\ell}+\sum_{i=R_j-r_j+1}^n\frac{\partial}{\partial \overline{z^{\eta_{1}}}}\cdots \frac{\partial}{\partial \overline{z^{\eta_{j+2}}}}\left(\alpha_i^j\frac{\partial \rho}{\partial z^{i}}\right)\right|_0.
\]
Accordingly, either $\rho_{\overline{\nu_1},\ldots,\overline{\nu_s},R_j-r_j+\ell}(0)=0$, or there exists another index $i^\prime>R_j-r_j$ with a shorter subset of multi-indices $\nu^\prime_1,\ldots, \nu^\prime_{s^\prime}\in\{\nu_1,\ldots, \nu_s\}$ with $s^\prime<0$ such that $\rho_{\overline{\nu^\prime_1},\ldots,\overline{\nu^\prime_{s^\prime}},i^\prime}(0)=0$. The latter eventually leads to a contradiction however, because if we repeat this argument recursively, proceeding next with the field $X_{\ell^\prime}^{j^\prime}$ having $i^\prime=R_{j^\prime}-r_{j^\prime}+\ell^\prime$, then the implication on each recursion is that there exists yet another index $i^{\prime\prime}$ with corresponding shorter list of muli-indices $\nu^{\prime\prime}_1,\ldots, \nu^{\prime\prime}_{s^{\prime\prime}}\in\{\nu_1,\ldots, \nu_s\}$ where $s^{\prime\prime}<s^\prime$ and eventually one exhausts all indices so that the implication of \emph{there existing another index} cannot hold. Therefore \eqref{adapted FF basis in detail alt} holds for all $s<j+2$.

Regarding  \eqref{leading terms for k-nond def eqn}, since $[\tilde{\mathcal{K}}_0,\mathbb{C}D]\subset \mathbb{C}D$ and $[\cdots[X_\ell,\overline{Y_1}],\ldots,\overline{Y_{j+1}}]\in \Gamma(\mathbb{C}D)$, if $Y_{j+2}\in\Gamma(\tilde{\mathcal{K}}_0)$ then $[\cdots[X_\ell,\overline{Y_1}],\ldots,\overline{Y_{j+2}}]$ is contained in $\mathbb{C}D$, a contradiction. Therefore, $Y_j\in \{X^{-1}_i\,|\, i=1,\ldots, r_{-1}\}$ must hold. By direct computation, noting all $\alpha^j_i$ vanish at $0$ and using \eqref{adapted FF basis in detail}, \eqref{adapted FF basis in detail alt}, and now \eqref{leading terms for k-nond def eqn alt},  we find that the $\frac{\partial}{\partial w}$ part of the Lie bracket  in \eqref{leading terms for k-nond def eqn} has exactly the same coefficient on the left side of \eqref{leading terms for k-nond def eqn}. The $\frac{\partial}{\partial \overline{w}}$ part also has the same coefficient, and it must be nonzero because the bracket is outside of $\mathbb{C}D$, whilst $\mathbb{C}D$ is spanned by $\langle\frac{\partial}{\partial z^i},\frac{\partial}{\partial \overline{z^i}}\rangle$ at $0$.
\end{proof}

\begin{corollary}\label{corol: leading terms of k nond def eqn}
Let $\rho:U\subset\mathbb{C}^n\to \mathbb{R}$ be a local pre-K\"{a}hler potential of a uniformly $k$-nondegenerate pre-K\"{a}hler structure expressed in coordinates adapted to its Freeman filtration at $0$ as in \eqref{coordinates adapted to FF}, with $R_j$ and $r_j$ as in \eqref{FF levels dimensions}. For $-1\leq j<k-1$, consider the $(j+2)$-dimensional arrays $A^j_1,\ldots,A^j_{r_j}$ of size $(r_{-1})^{j+2}$ whose $(\eta_1,\ldots,\eta_{j+2})$ entries are
\[
\left(A^j_\ell\right)_{\eta_1,\ldots,\eta_{j+2}}:=\rho_{\overline{\eta_1},\ldots,\overline{\eta_{j+2}},R_j-r_j+\ell}(0)
\quad\quad\forall\,1\leq \ell\leq r_j.
\]
For all  $-1\leq j<k-1$, $A^j_1,\ldots,A^j_{r_j}$ are linearly independent.
\end{corollary}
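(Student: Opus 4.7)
The plan is to reinterpret the arrays $A^j_\ell$ via Proposition \ref{prop: leading terms of k nond def eqn lemma} as tabulations of $\partial/\partial w$ coefficients of iterated Lie brackets on the associated pre-Sasakian hypersurface, and then to extract linear independence from the defining property of the Freeman filtration. Work in the local realization $u=\rho(z)$ of the pre-Sasakian structure with $\rho(0)=0$ and $\left.\exd\rho\right|_0=0$, retaining the adapted bases $X^j_1,\ldots,X^j_{r_j}$ of $\tilde{\mathcal{K}}_j/\tilde{\mathcal{K}}_{j+1}$ from \eqref{adapted FF basis}--\eqref{adapted FF basis in detail alt}. Under this normalization $\mathbb{C}D_0$ coincides with the kernel of $\left.\exd w\right|_0$ on $\mathbb{C}T_0 S$, so a tangent vector at $0$ lies in $\mathbb{C}D_0$ if and only if its $\partial/\partial w$-coefficient vanishes there.

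Fix a level $-1\leq j<k-1$ and suppose $\sum_{\ell=1}^{r_j} c_\ell A^j_\ell=0$. Set $X_c:=\sum_{\ell=1}^{r_j} c_\ell X^j_\ell\in\tilde{\mathcal{K}}_j$. By the identity \eqref{leading terms for k-nond def eqn} of Proposition \ref{prop: leading terms of k nond def eqn lemma} and multilinearity of the Lie bracket, the assumed dependence is equivalent to
\[
\left.\bigl[\cdots\bigl[X_c,\,\overline{X^{-1}_{\eta_1}}\bigr],\ldots,\overline{X^{-1}_{\eta_{j+2}}}\bigr](w)\right|_0=0
\quad\mbox{for all }\eta_1,\ldots,\eta_{j+2}\in\{1,\ldots,r_{-1}\},
\]
and by the preceding remark this says that all such $(j{+}2)$-fold iterated brackets lie in $\mathbb{C}D$ at the origin.

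From here the goal is to conclude $X_c\in\tilde{\mathcal{K}}_{j+1}$ at $0$, since then $c_\ell=0$ for every $\ell$ because the $X^j_\ell$ descend to a basis of $\tilde{\mathcal{K}}_j/\tilde{\mathcal{K}}_{j+1}$ at $0$. Membership in $\tilde{\mathcal{K}}_{j+1}$ amounts to showing that \emph{every} $(j{+}2)$-fold bracket of $X_c$ with antiholomorphic sections of $T^{1,0}S$ lies in $\mathbb{C}D$ at $0$. Expanding multilinearly along the decomposition $T^{1,0}_0S=\langle X^{-1}_\eta\rangle_{\eta=1}^{r_{-1}}\oplus\tilde{\mathcal{K}}_0|_0$, terms whose arguments all come from the first summand are handled directly by the display above. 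Any term containing a factor $\overline{Y}$ with $Y\in\tilde{\mathcal{K}}_0$ can be moved to the outermost position modulo shorter iterated brackets of $X_c$, which already lie in $\mathbb{C}D$ by \eqref{leading terms for k-nond def eqn alt}; the final bracket with such a $\overline{Y}$ then preserves $\mathbb{C}D$ via the structural inclusion $[\Gamma(\tilde{\mathcal{K}}_0\oplus\overline{\tilde{\mathcal{K}}_0}),\Gamma(\mathbb{C}D)]\subset\Gamma(\mathbb{C}D)$ that is built into the Freeman filtration. The main delicate step is exactly this upgrade from the subset $\eta_i\in\{1,\ldots,r_{-1}\}$ to all antiholomorphic directions in $T^{1,0}S$, but it is the direct converse of the reduction already carried out in the proof of Proposition \ref{prop: leading terms of k nond def eqn lemma} and introduces no new ingredients.
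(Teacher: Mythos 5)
Your proof is correct, but it takes a genuinely different route from the paper's. The paper argues by contradiction in two lines: assuming a nontrivial dependence $\sum_\ell c_\ell A^j_\ell=0$, it applies an invertible linear change of the $z$-coordinates that preserves the block $\spn\{z^{R_j-r_j+1},\ldots,z^{R_j}\}$ (hence preserves adaptedness to the Freeman filtration) and sends $z^{R_j-r_j+1}$ to the dependent combination; in the new coordinates the entire array $A^j_1$ vanishes, contradicting the nonvanishing claim of Proposition \ref{prop: leading terms of k nond def eqn lemma}. You instead work intrinsically on the pre-Sasakian side: you form $X_c=\sum_\ell c_\ell X^j_\ell$, use the bracket identity of \eqref{leading terms for k-nond def eqn} to convert the dependence into vanishing (mod $\mathbb{C}D$ at $0$) of all $(j{+}2)$-fold brackets of $X_c$ against the $\overline{X^{-1}_\eta}$, upgrade to arbitrary antiholomorphic arguments via the symmetry and tensoriality of the higher-order Levi map together with $[\Gamma(\tilde{\mathcal{K}}_0\oplus\overline{\tilde{\mathcal{K}}_0}),\Gamma(\mathbb{C}D)]\subset\Gamma(\mathbb{C}D)$, and conclude $X_c(0)\in\tilde{\mathcal{K}}_{j+1}|_0$, forcing $c=0$. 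Two small points to be aware of: first, you invoke the identity in \eqref{leading terms for k-nond def eqn} for \emph{every} index tuple, whereas the Proposition literally asserts it only for some tuple realizing nonvanishing; this stronger form is indeed what the Proposition's computation establishes (the correction terms vanish for arbitrary tuples by \eqref{leading terms for k-nond def eqn alt}), but you should say so explicitly. Second, your reduction step and the final implication rest on the pointwise iterated-bracket characterization of $\tilde{\mathcal{K}}_{j+1}$, which the paper also uses only implicitly. What your approach buys is that it bypasses the nonvanishing conclusion of the Proposition entirely (in effect re-deriving it in the stronger linear-independence form) and makes transparent that the arrays $A^j_\ell$ are coordinate expressions of higher-order Levi forms; what the paper's approach buys is brevity, reducing the corollary to the already-proved scalar statement by a change of basis.
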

\begin{proof}
Suppose $A^j_1,\ldots,A^j_{r_j}$ are not linearly independent and let $0\neq (c_1,\ldots,c_{r_j})\in\mathbb{R}^{r_j}$ satisfy $\sum_\ell c_\ell A^j_\ell=0$. Apply an invertible linear map $T:\mathbb{C}^{n}\to \mathbb{C}^{n}$ satisfying
\[
T(z^{R_j-r_j+1})=\sum_{\ell=1}^{r_j}c_\ell z^{R_j-r_j+\ell}
\quad\mbox{ and }\quad
T(z^\ell)\in \spn\{z^{R_j-r_j+1},\ldots,z^{R_j}\}
\]
and $T(z^i)= z^i$ for all $i\not\in\{R_j-r_j,\ldots,R_j\}$ to transform the  $z$ coordinates. In these new coordinates, we have
\[
\rho_{\overline{\eta_1},\ldots,\overline{\eta_{j+2}},R_j-r_j+1}(0)=\sum_{\ell=1}^{r_j}c_\ell\left(A^j_\ell\right)_{\eta_1,\ldots,\eta_{j+2}}=0
\]
for all $\eta_1,\ldots, \eta_{j+2} \in \{1,\ldots, r_{-1}\}$, which contradicts Lemma \ref{prop: leading terms of k nond def eqn lemma}.
\end{proof}

\section{2-nondegenerate pre-K\"ahler structures on complex surfaces}  
\label{sec:4D-prekahler}
In this section we  give a solution of the equivalence problem for 2-nondegenerate pre-K\"ahler structures on complex surfaces in the form of a Cartan geometry, express their basic invariants in terms of a potential, and give an interpretation of the vanishing of each of the basic invariants. We refer the reader to the accompanying file \cite{PythonCode} for a notebook where the main part of the computations are carried out using  \cite{dgcv}.     
\subsection{The structure bundle}\label{sec:structure-bundle}
In order to solve the equivalence problem and find the structure equations for  2-nondegenerate pre-K\"ahler complex surfaces, we follow Cartan's method of equivalence by sequentially finding coframings adapted to increasingly restrictive but naturally defined conditions for any pre-K\"ahler complex surface. Our treatment here is self-contained, however familiarity with Cartan's method of equivalence would be helpful, for which we refer to \cite{Gardner}. 

\subsubsection{Pre-K\"ahler structure and 1-adaptation}
 Let $M$ be a 4-dimensional real manifold with a coframe $(\alpha^1,\alpha^2,\beta^1,\beta^2)$ on $T^*M$ with respect to which the pre-symplectic form $\omega$ is expressed  as
 \begin{equation}\label{eq:omega-4D-prekahler}
   \omega=\alpha^1\w\alpha^2,
    \end{equation}
    where the planes that annihilate $\omega$ are tangent planes to the 2-dimensional leaves of the integrable Pfaffian system  $\cI_{\alpha}=\{\alpha^1,\alpha^2\}.$ The 1-forms $(\alpha^1,\alpha^2)$ satisfying \eqref{eq:omega-4D-prekahler} are defined up to the action of $\mathrm{SL}(2,\RR),$ i.e. they are defined up to transformations $\alpha^i\to A^i_j\alpha^j$ where $[A^i_j]\in\mathrm{SL}(2,\RR)$.

    For a pre-K\"ahler structure  the leaf space of $\cI_\alpha,$ denoted as $N,$ is equipped with a symmetric bilinear form $g=g_{ij}\alpha^i\alpha^j.$ Since $\alpha^i$'s, satisfying \eqref{eq:omega-4D-prekahler}, are defined up to an action of $\mathrm{SL}(2,\RR),$ one can find a coframe $(\alpha^1,\alpha^2)$  with respect to which the following holds
\begin{equation}\label{eq:omega-g-4D-prekahler}
  g=\lambda\left((\alpha^1)^2+(\alpha^2)^2\right),\quad \omega=\alpha^1\w\alpha^2,
  \end{equation}
  for some nowhere vanishing function $\lambda.$ Coframes $(\alpha^1,\alpha^2)$ with respect to which $g$ and $\omega$ are written as \eqref{eq:omega-g-4D-prekahler} are defined up to an action of $\mathrm{SO}(2,\RR).$

Using  the compatibility of  $g$ and $\omega,$ expressed as $g(\cdot ,\cdot )=\omega(\cdot ,J \cdot),$ where $J\colon T M\to TM$ is the  integrable almost complex structure on $M,$ it follows that  $\lambda=1$ and that  $\alpha^1+\ri\alpha^2$ is an $\ri$-eigenvector for the dual action of $J$  on $T^*M$.  Now one can choose $\beta^1$ and $\beta^2$ so that the coframe $(\alpha^1,\alpha^2,\beta^1,\beta^2)$  satisfies \eqref{eq:omega-g-4D-prekahler} and the 1-forms  $(\alpha^1+\ri\alpha^2,\beta^1+\ri\beta^2)$ are  $\ri$-eigenspace for the almost complex structure on $T^*M.$ The integrability of the almost complex structure implies that the complex Pfaffian system $\{\alpha^1+\ri\alpha^2,\beta^1+\ri\beta^2\}$  is  integrable and its annihilator defines the holomorphic distribution  $\scH\subset \CC\otimes TM.$

By the discussion above, one obtains
\begin{equation}\label{eq:omega-g-1-adapted-4D}
  \omega=\tfrac{\ri}{2}\theta^1\w\cthetao,\quad g=\theta^1\cthetao,
  \end{equation}
where
\[\theta^1:=\alpha^1+\ri\alpha^2,\quad \theta^2:=\beta^1+\ri\beta^2\]
are the holomorphic 1-forms on $M.$

The \emph{1-adapted} coframe $(\theta^1,\theta^2,\cthetao,\cthetat)$ with respect to which  $\omega$ and $g$ can be expressed as \eqref{eq:omega-g-1-adapted-4D}  are defined up to transformations $\theta^i\to A^i_j\theta^j$, where $A\in G_1$ and
\begin{equation}\label{eq:G1-matrix}
  G_1:=\left\{ A\in\mathrm{GL}(2,\CC)\ \vline \ A=
    \begin{pmatrix}
      e^{a\ri} & 0\\
      b_1 + \ri b_2 & c_1 +\ri c_2
    \end{pmatrix}
  \right\}.
  \end{equation}

\subsubsection{2-nondegeneracy and 2-adaptation}
The degenerate directions of $\omega$ are equipped with a splitting  $K=\scK\oplus\cscK$  where $\scK=\langle\tfrac{\partial}{\partial\theta^2}\rangle.$ As a result of  Proposition \ref{prop: low-order finite nondeg consequenes}, 2-nondegeneracy can be expressed as
\begin{equation}\label{eq:2nondeg}
  \begin{aligned}
    \exd\theta^1\equiv \widetilde\lambda\ \cthetao\w\theta^2\mod\{\theta^1\},\\
      \end{aligned}
  \end{equation}
  for a nowhere vanishing complex-valued function $\widetilde\lambda.$

  Via coframe transformations $\theta^i\to A^i_j\theta^j$ where $[A^i_j]\in G_1,$ one arrives at the transformation law for $\widetilde\lambda,$ given by   \[\widetilde\lambda\to e^{-2a\ri}(c_1+\ri c_2)\widetilde\lambda.\]
  As a result, one can define the \emph{2-adapted} coframes as those with respect to which
  \begin{equation}\label{eq:lambda-normaized-1}
    \widetilde\lambda=1.
  \end{equation}
  Consequently, such coframes are defined up to the action of $G_2\subset G_1$ for which $c_1+\ri c_2=e^{2a\ri}$ in  \eqref{eq:G1-matrix}, i.e.   
  \begin{equation}\label{eq:G2-2-adapted}
    G_2=\left\{ A\in\mathrm{GL}(2,\CC)\ \vline \ A=
    \begin{pmatrix}
      e^{a\ri} & 0\\
      b_1+\ri b_2 & e^{2a\ri}
    \end{pmatrix}
  \right\}.
\end{equation}

\subsubsection{Some differential relations}

Using the integrability of $\{\theta^1,\theta^2\}$ together with  2-nondegeneracy \eqref{eq:2nondeg} conditioned to the normalization \eqref{eq:lambda-normaized-1}, it follows that 
\begin{equation}\label{eq:dtheta12-G2}
  \begin{aligned}  \exd\theta^1=&\cthetao\w\theta^2+A^1_{12}\theta^1\w\theta^2+A^1_{1\co}\theta^1\w\cthetao+A^1_{2\co}\theta^2\w\cthetao+A^1_{2\ct}\theta^2\w\cthetat,\\
    \exd\theta^2=&A^2_{12}\theta^1\w\theta^2+A^2_{i\overline j}\theta^i\w\cthetaj,
      \end{aligned}
\end{equation}
for some complex-valued functions $A^i_{12}$ and $A^i_{j\ov k}$ on $M.$ 

Inspecting   $\exd\omega=0,$ it is straightforward to obtain $A^1_{2\co}=-\overline{A^1_{12}}$ and $A^1_{2\ct}=0.$ Thus, one arrives at
\begin{equation}\label{eq:dtheta1-G2}
  \exd\theta^1= \cthetao\w\theta^2 +A^1_{12}\theta^1\w\theta^2+  A^1_{1\co}\theta^1\w\cthetao-\ov{A^1_{12}}\theta^1\w\cthetat.
\end{equation}
Furthermore, inspecting $\exd^2\theta^1=0$ mod $\{\theta^1\},$ it is straightforward to obtain $A^2_{2\ct}=-2\ov{A^1_{12}}.$ As a result, one has
\begin{equation}\label{eq:dtheta12-2-G2}
  \begin{aligned}
    \exd\theta^1=&\cthetao\w\theta^2 +A^1_{12}\theta^1\w\theta^2+  A^1_{1\co}\theta^1\w\cthetao-\ov{A^1_{12}}\theta^1\w\cthetat,\\    \exd\theta^2=&A^2_{12}\theta^1\w\theta^2+A^2_{1\co}\theta^1\w\cthetao+A^2_{1\ct}\theta^1\w\cthetat+A^2_{2\co}\theta^2\w\cthetao-2\ov{A^1_{12}}\theta^2\w\cthetat.
      \end{aligned}
    \end{equation}
    The differential relation we are seeking is between $A^2_{1\ct}$ and derivatives of $A^1_{1\co}$ and $A^2_{1\ct},$ which will be used subsequently. To relate these quantities, it is a matter of  computation to show that in the expansion of $\exd^2\theta^1=0$ the following has to hold
    \[(-A^1_{1\co} \ov{A^1_{12}}+\ov{A^1_{12}} \ov{A^2_{12}}+A^2_{1\ct}+A^1_{1\co;\ct}+\ov{A^1_{12;\co}})\theta^1\w\cthetao\w\cthetat=0,\]
 wherein we are using the notation for coframe differentiation introduced in \ref{sec:conventions}.    Similarly, from the expansion of $\exd^2\theta^2=0$ one obtains
    \[(A^2_{2\co} \ov{A^1_{12}}-2 \ov{A^1_{12}} \ov{A^2_{12}}+A^2_{1\ct}-A^2_{2\co;\ct}-2 \ov{A^1_{12;\co}})\cthetao\w\theta^2\w\cthetat=0.\]
    
    As a result of the two relations above one arrives at
    \begin{equation}\label{eq:B111-B221}
      A^2_{2\co ;\ct}-2A^1_{1\co;\ct}= 3A^2_{1\ct}+\ov{A^1_{12}}(A^2_{2\co} - 2A^1_{1\co}).
    \end{equation}
\subsubsection{A distinguished holomorphic splitting and 3-adaptation}
Starting with a 2-adapted coframing satisfying \eqref{eq:dtheta12-2-G2}, as a result of the differential relation \eqref{eq:B111-B221}, it is an elementary computation to show that via the admissible $G_2$-transformation
\begin{equation}\label{eq:distinguished-splitting-G2-trans}
  \theta^2\to \theta^2+\tfrac 13(A^2_{2\ov 1}-2A^1_{1\ov 1})\theta^1,
  \end{equation}
one has
\begin{equation}\label{eq:dtheta12-3-G3}
  \begin{aligned}
    \exd\theta^1=&\cthetao\w\theta^2 +B^1_{12}\theta^1\w\theta^2+  B^1_{1\ov 1}\theta^1\w\cthetao -\ov {B^1_{12}}\theta^1\w\cthetat,\\    \exd\theta^2=&B^2_{12}\theta^1\w\theta^2+B^2_{1\ov 1}\theta^1\w\cthetao+2B^1_{1\ov 1}\theta^2\w\cthetao-2\ov{B^1_{12}}\theta^2\w\cthetat,
      \end{aligned}
    \end{equation}
    where $B^1_{12}=A^1_{12},$ $B^1_{1\ov 1}=\tfrac 13(-2 \ov {A^1_{12}} \ov{A^1_{1\ov 1}}+\ov{A^1_{12}} \ov{A^1_{21}}+A^1_{1\ov 1}+ A^1_{2\ov 1})$ while $B^2_{12}$ and  $B^2_{1\ov 1}$ involve the first jet of $A^1_{12}$ and $A^2_{2\ov 1}$ and $A^1_{1\ov 1}.$ Obtaining the exact expressions of these quantities is straightforward, although tedious, and will not be important for us.  

The main point of the preceeding computation is that  $\exd\theta^2$ in \eqref{eq:dtheta12-3-G3} does not have a term involving $\theta^1\w\cthetat.$ This will be the defining property of \emph{3-adapted} coframes, i.e.    among 2-adapted coframing satisfying \eqref{eq:dtheta12-2-G2}, the 3-adapted coframing are defined by the property that with respect to them one has $A^2_{1\ov 2}=0.$

Our computations above shows, given any 2-adapted coframe, how one can make it 3-adapted, i.e. by an action of $G_2$ as in \eqref{eq:G2-2-adapted} in which $b_1+\ri b_2=\tfrac 13(A^2_{2\ov 1}-2A^1_{1\ov 1}).$ As a result, 3-adaptation of a coframe reduces the group of admissible transformation to the subgroup $G_3\subset G_2$ defined as 
  \begin{equation}\label{eq:G3-3-adapted}
    G_3=\left\{ A\in\mathrm{GL}(2,\CC)\ \vline \ A=
    \begin{pmatrix}
      e^{a\ri} & 0\\
      0 & e^{2a\ri}
    \end{pmatrix}
  \right\}\cong \mathrm{U}(1).
\end{equation}
Hence, 3-adapted coframes $(\theta^1,\theta^2)$ are defined up to a $\mathrm{U}(1)$ action and, thus, define a  splitting
\[\scH=\langle\tfrac{\partial}{\partial\theta^1}\rangle\oplus \langle\tfrac{\partial}{\partial\theta^2}\rangle\]
for the holomorphic distribution $\scH\subset \CC TM.$
 
As was mentioned in \ref{sec:outline-main-results}, the resulting holomorphic splitting is \emph{distinguished} due to the fact that $[\partial_{\theta^{2}},\partial_{\ov{\theta^1}}]\equiv  \partial_{\theta^1} $ modulo $\langle\partial_{\ov{\theta^1 }},\partial_{\theta^{2}}\rangle.$
\begin{remark}
  Alternatively, one could find the $G_2$-action on the torsion entry $B^2_{12}$ and translate it to zero. Doing so  requires some familiarity with Cartan's method of equivalence, which we will not elaborate  on here.  
\end{remark}
\subsubsection{Lifted coframe on a $\mathrm{U}(1)$-bundle}

The set of 3-adapted coframes defines  a principal $\mathrm{U}(1)$-bundle $\varsigma:\cG\rightarrow M$ with the right action  given by
\[R_{g}( \vartheta_p)=g^{-1} \vartheta_p,\]
where $\vartheta_p\in\varsigma^{-1}(p)$ is a coframe at $p\in M,$    $g\in  G_{3},$  and the action on the right hand side is the ordinary matrix multiplication on  $(\theta^1,\theta^2)^\top$ and its conjugate on $(\cthetao,\cthetat)^\top$

Restricting to an open set $U\subset M$, on $\cG|_U\cong  U \times  G_3,$ one can define a canonical  set of semi-basic  1-forms  by lifting $\theta^i$'s
 and  $\cthetai$'s. In the case of $\theta^i$'s, their lift, denoted by $(\what{\theta}{}^1,\what{\theta}{}^2)^\top,$ is  \[\what{\vartheta}(p,g):=(\what{\theta}{}^1,\what{\theta}{}^2)^\top= g^{-1} \what{\vartheta}{}_p,\]
where $\what{\vartheta}{}_p$ is a choice of 3-adapted holomorphic coframe  $(\theta^1,\theta^2)^\top$  at $p\in M.$ 
 Their exterior derivatives are given by
\begin{equation}
  \label{eq:str-eqns-strart}
  \exd\what\vartheta=\exd g^{-1} \w \vartheta+g^{-1} \exd \vartheta=-g^{-1}\exd g \w g^{-1} \vartheta+g^{-1}\exd \vartheta=-\Omega\w \what\vartheta+ T,
\end{equation}
where $\Omega(p,g)= g^{-1}\exd g$ is the $\mathfrak g_3$-valued 1-form of the Lie group \eqref{eq:G3-3-adapted}. The 2-forms  $T(p,g)$ is the lift of the torsion to $\cG.$

Because from now on we will be mostly working with  lifted coframe on $\cG$, we will drop  $\ \what{}\ $ that was used to distinguish between lifted 1-forms on $\cG$ and those on $M.$  The distinction, if not clear from the context, will be mentioned explicitly. 

The 1-forms in $\Omega$ along each fiber of $\cG\to M$ can be interpreted as the Maurer--Cartan forms of $G_3$ and, therefore, one has  
\begin{equation}\label{eq:Omega-MC-forms-G2}
  \Omega=  \begin{pmatrix}
    \ri \psi & 0\\
    0 & 2\ri\psi
  \end{pmatrix},
\end{equation}
for some real-valued 1-form $\psi\in\Omega^1(\cG).$
Expressing   $\Omega$  as \eqref{eq:Omega-MC-forms-G2} and using \eqref{eq:dtheta12-3-G3}, it follows that 
\begin{equation}\label{eq:streqns-G3-lifted}
  \begin{aligned}
    \exd\theta^1=&-\ri\psi\w\theta^1+\cthetao\w\theta^2 - (C^1_{12}\theta^2+  C^1_{1\ov 1}\cthetao -\ov {C^1_{12}}\cthetat)\w\theta^1,\\    \exd\theta^2=&-2\ri\psi\w\theta^2+C^2_{12}\theta^1\w\theta^2+C^2_{1\ov 1}\theta^1\w\cthetao-2(C^1_{1\ov 1}\cthetao-\ov{C^1_{12}}\cthetat)\w\theta^2,
      \end{aligned}
    \end{equation}
    where the complex-valued functions $C^i_{ij}$ and $C^i_{i\ov j}$ are defined on $\cG$ and are the lift of the functions  $B^i_{ij}$ and $B^i_{i\ov j}$ in \eqref{eq:dtheta12-3-G3} via the induced $\mathrm{U}(1)$ action. More explicitly, one can  easily compute 
    \[C^1_{12}=B^1_{12}e^{2a\ri },\quad C^1_{1\ov 1}= B^1_{1\ov 1}e^{-a\ri},\quad C^2_{12}= B^2_{12}e^{a\ri},\quad C^2_{1\ov 1}=B^2_{1\ov 1}e^{-2a\ri}.\]

 \subsubsection{Absorption of torsion and absolute parallelism}

  The real 1-form $\psi$ on $\cG$ is ambiguous up to a linear combination of the semi-basic 1-forms, i.e. it can be replaced by 
  \begin{equation} \label{eq:xis-for-psi}
      \begin{aligned}
    \psi&\to \psi+x_1\theta^1+\ov{x_1}\cthetao+x_2\theta^2+\ov{x_2}\cthetat,\\
    \end{aligned}
   \end{equation}
   for some complex-valued functions $x_i$. A canonical coframe on $\cG$ can be determined by prescribing an absorption of torsion. A natural choice of torsion absorption would be to absorb the non-constant torsion coefficient of $\exd\theta^1$ in \eqref{eq:streqns-G3-lifted} i.e.
\[x_1=\ri \ov{C^1_{1\ov 1}},\quad x_2=-\ri C^1_{12}.\]
Using the above choice of $x_1$ and $x_2$ in the replacement \eqref{eq:xis-for-psi}, equations \eqref{eq:streqns-G3-lifted} take the form
\begin{equation}\label{eq:dtheta12-on-cG}
  \begin{aligned}
    \exd\theta^1&=-\ri\psi\w\theta^1+\cthetao\w\theta^2,\\
      \exd\theta^2&=-2\ri\psi\w\theta^2 +T_1\theta^1\w\ov{\theta^1}+T_2\theta^1\w\theta^2,\\    
      \end{aligned}
  \end{equation}
  where
  \begin{equation}\label{eq:C-T}
  T_1=C^2_{1\co},\quad T_2=C^2_{12}+2\ov{C^1_{1\co}}.
  \end{equation}

 As a result, the 1-forms $(\theta^1,\theta^2,\cthetao,\cthetat,\psi)$ define an $\{e\}$-structure, also known as an absolute parallelism, on $\cG.$

 \subsubsection{The structure equations and local generality}
Using equations \eqref{eq:dtheta12-on-cG}, it is elementary to use $\exd^2\theta^i=0$ to find
  \begin{equation}\label{eq:dphi-streqs}
        \begin{aligned}
      \exd\psi &=\ri \theta^2\w\ov{\theta^2}+\ri \ov{T_2}\theta^1\w\ov{\theta^2}-\ri T_2\ov{\theta^1}\w\theta^2+\tfrac{\ri}{2}(T_{2;\bar 1}-T_{1;2})\theta^1\w\ov{\theta^1},
    \end{aligned}
  \end{equation}
  together with Bianchi identities
  \begin{subequations}\label{eq:T1-2-translaw}
      \begin{gather}
      \exd T_1=T_{1;1}\theta^1+T_{1;\bar 1}\ov{\theta^1}+T_{1;2}\theta^2-2\ri T_1\psi,\label{eq:T1-trans}\\
      \exd T_2=T_{2;1}\theta^1+T_{2;\bar 1}\ov{\theta^1}+T_{2;2}\theta^2+2\ov{T_2}\ov{\theta^2}+\ri T_2\psi,\label{eq:T2-trans}
    \end{gather}
      \end{subequations}
  satisfying the relation
  \begin{equation}\label{eq:one-Bianchi-T2-T1}
    T_{2;\bar 1}-T_{1;2}=\ov{T_{2}}{}_{; 1}-\ov{T_{1}}{}_{;\bar 2},
    \end{equation}
which guarantees that \eqref{eq:dphi-streqs} is real-valued.

Applying Cartan--K\"ahler theory to the structure equations above, it follows that real analytic 2-nondegenerate pre-K\"ahler 4-manifolds locally depend on 2 functions of 3 variables.
\begin{remark}
We point out that in all known examples, e.g. see \cite[Sections 5.1.5, 5.2.5, 5.3.6]{MS-cone}, if the local generality of a geometric structure is given by $s$ functions if $k$ variables, then the local generality of those structures equipped with a distinguished choice of infinitesimal symmetry would be $s$ functions of $k-1$ variables. In other words, the presence of an infinitesimal symmetry can be interpreted as a local \emph{redundancy} in the local description of the structure.   As a result, our count above suggests that the local generality of real analytic 2-nondegenerate CR structures in dimension five is given by 2 functions of 4 variables.
\end{remark}

  \subsection{Cartan geometric description}\label{sec:cart-geom-description}
In order to give a Cartan geometric solution of the equivalence problem for 2-nondegenerate pre-K\"ahler structure on complex surfaces, recall the definition of a Cartan geometry.
  \begin{definition}\label{def:cart-geom-definitio}
  Let $G$ be a Lie group and $P\subset G$ a Lie subgroup  with Lie algebras $\fg$ and $\fp\subset\fg,$ respectively.  A Cartan geometry of type $(G,P)$ on a manifold $M,$ denoted as $(\cG\to M,\varphi),$ is a right principal $P$-bundle $\cG\to M$ equipped with a Cartan connection $\varphi\in\Omega^1(\cG,\fg),$ i.e. a $\fg$-valued 1-form on $\cG$ satisfying
  \begin{enumerate}
  \item $\varphi$ is $P$-equivariant, i.e. $R_g^*\varphi=\mathrm{Ad}_{g^{-1}}\varphi$ for all $g\in P,$ where $R_g$ denotes the right action by $g$.
  \item  $\varphi_z\colon T_z\cG\to \fg$ is  a linear isomorphism for all $z\in \cG.$
  \item $\varphi$ maps fundamental vector fields to their generators, i.e. $\varphi(\zeta_X)=X$ for any $X\in\fp$ where $\zeta_X(z):=\frac{\exd}{\exd t}\,\vline_{\,t=0}R_{exp(tX)}(z).$
  \end{enumerate} 
 The 2-form $\Phi\in\Omega^2(\cG,\fg)$ defined as
    \[\Phi(u,v)=\exd\varphi(u,v)+[\varphi(X),\varphi(Y)]\quad \text{for\ \ }X,Y\in \Gamma(T\cG),\]
is called the Cartan curvature and is $P$-equivariant and semi-basic with respect to the fibration $\cG\to M.$
\end{definition} 
  Using the preceding  discussion,  one has the following solution for the equivalence problem of 2-nondegenerate pre-K\"ahler structures on complex surfaces.
  \begin{theorem}\label{thm:cart-geom-descr}
     Any  2-nondegenerate pre-K\"ahler structure $(g,\omega)$ on a complex surface $M$ canonically defines a Cartan geometry $(\cG\to M,\varphi)$ of type $(\RR^2\rtimes\mathrm{SL}(2,\RR) ,\mathrm{U}(1))$ where the Cartan connection and its curvature are
     \begin{equation}\label{eq:Cartan-conn-curv}
\varphi=
  \begin{pmatrix}
    0 & 0 & 0\\
    \ov{\theta^1} & -\ri\psi & \ov{\theta^2}\\
    {\theta^1}&{\theta^2} & \ri\psi
  \end{pmatrix},\quad 
  \exd\varphi+\varphi\w\varphi=
  \begin{pmatrix}
    0 & 0 & 0\\
    0 & -\ri\Psi & \ov{\Theta^2}\\
    0&{\Theta^2} & \ri\Psi
  \end{pmatrix},
\end{equation}
in which 
\begin{equation}\label{eq:cartan-curv-prekahler}
      \begin{aligned}
        \Theta^2=T_1\theta^1\w\ov{\theta^1}+T_2\theta^1\w\theta^2,\quad
        \Psi =\ri \ov{T_2}\theta^1\w\ov{\theta^2}-\ri T_2\ov{\theta^1}\w\theta^2+T_3\theta^1\w\ov{\theta^1},
    \end{aligned}
\end{equation}
  for some functions $T_1,T_2\in C^\infty(\cG,\CC)$ and $T_3\in C^{\infty}(\cG,\ri \RR)$.  Conversely, any such Cartan geometry defines a unique pre-K\"ahler structure on $M.$ The basic invariants for such Cartan geometries are  $\bT_1,\bT_2\in C^\infty(M,\RR)$  where
  \begin{equation}\label{eq:fund-inv-prekahler4}
\bT_1:=T_1\ov{T_1},\quad   \bT_2:=T_2\ov{T_2},  
  \end{equation}
whose vanishing  characterizes locally flat pre-K\"ahler structures, i.e. $(g,\omega)$ is locally equivalent to the homogeneous space $G/\mathrm{U}(1)$ where $G=\RR^2\rtimes \mathrm{SL}(2,\RR).$
\end{theorem}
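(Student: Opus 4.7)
The plan is to read the theorem as a repackaging of the equivalence method carried out in \ref{sec:structure-bundle}, and the main work is the verification that the $\mathrm{U}(1)$-bundle $\varsigma\colon\cG\to M$ together with the matrix $\varphi$ in \eqref{eq:Cartan-conn-curv} really is a Cartan geometry, followed by the curvature computation, the converse construction, and the identification of the flat model. First I would identify the target Lie algebra: viewing $\varphi$ pointwise as a matrix with parameters $(u,v,s)\in\CC\oplus\CC\oplus\RR$, the bottom-right $2\times 2$ block closes up under bracket to a copy of $\mathfrak{su}(1,1)\cong \mathfrak{sl}(2,\RR)$ and the first column transforms as its $2$-dimensional defining (real) representation, so $\fg\cong \RR^2\rtimes\mathfrak{sl}(2,\RR)$ with $\fp=\mathfrak{u}(1)$ corresponding to $u=v=0$. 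I would then check the three Cartan axioms in order: the linear isomorphism $\varphi_z\colon T_z\cG\to\fg$ is immediate because $(\theta^1,\theta^2,\cthetao,\cthetat,\psi)$ is an absolute parallelism on $\cG$; the reproducing property $\varphi(\zeta_X)=X$ for $X\in\mathfrak{u}(1)$ is the content of $\psi$ being the connection $1$-form on the principal $\mathrm{U}(1)$-bundle produced in \ref{sec:structure-bundle}; and $\mathrm{U}(1)$-equivariance follows from the weights of the tautological forms, namely $R_g^*\theta^1=e^{-a\ri}\theta^1$, $R_g^*\theta^2=e^{-2a\ri}\theta^2$, $R_g^*\psi=\psi+\exd a$, which agree with $\mathrm{Ad}_{g^{-1}}$ acting on the matrix entries of $\varphi$.

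Next I would compute the curvature $\exd\varphi+\varphi\w\varphi$ block by block. Expanding using \eqref{eq:dtheta12-on-cG} and \eqref{eq:dphi-streqs}, the $(2,1)$ and $(3,1)$ entries contribute $\exd\ov{\theta^1}-\ri\psi\w\ov{\theta^1}-\ov{\theta^2}\w\theta^1$ and its conjugate, both of which vanish identically by \eqref{eq:dtheta12-on-cG}; the $(2,3)$ entry yields exactly $\Theta^2$ as in \eqref{eq:cartan-curv-prekahler}; and the central diagonal entry reproduces $\Psi$ with $T_3=\tfrac{1}{2}(T_{2;\bar 1}-T_{1;2})$. The Bianchi identity \eqref{eq:one-Bianchi-T2-T1} then says precisely that $T_3\in\ri\RR$, consistent with $\Psi$ being pure imaginary. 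This establishes the forward direction.

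For the converse, given any Cartan geometry $(\cG\to M,\varphi)$ of type $(\RR^2\rtimes\mathrm{SL}(2,\RR),\mathrm{U}(1))$ whose curvature has the prescribed form, one reads off $(\theta^1,\theta^2,\psi)$ as components of $\varphi$ and recovers the structure equations \eqref{eq:dtheta12-on-cG}. The 2-form $\tfrac{\ri}{2}\theta^1\w\cthetao$ is $\mathrm{U}(1)$-invariant, horizontal with respect to $\varsigma$, and closed by \eqref{eq:dtheta12-on-cG}, so it descends to a closed real $(1,1)$-form $\omega$ on $M$; the first structure equation $\exd\theta^1\equiv\cthetao\w\theta^2\pmod{\theta^1}$ is exactly the 2-nondegeneracy condition \eqref{eq:2nondeg} after normalization. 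For the basic invariants, \eqref{eq:T1-2-translaw} shows $T_1$ has $\mathrm{U}(1)$-weight $-2\ri$ and $T_2$ has weight $+\ri$ under the fiber action, so the products $T_1\ov{T_1}$ and $T_2\ov{T_2}$ are weight zero and descend to well-defined smooth functions $\bT_1,\bT_2\in C^\infty(M,\RR)$.

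Finally, to identify flatness with $\bT_1=\bT_2=0$: if $\bT_1=\bT_2\equiv 0$ then $T_1=T_2\equiv 0$ on $\cG$, the Bianchi identities \eqref{eq:T1-2-translaw} collapse, and hence $T_3=\tfrac{1}{2}(T_{2;\bar 1}-T_{1;2})=0$, so the full Cartan curvature vanishes. A standard result for Cartan geometries with vanishing curvature produces a local bundle equivalence with the Klein geometry $G/P$ for $G=\RR^2\rtimes\mathrm{SL}(2,\RR)$ and $P=\mathrm{U}(1)$; the converse implication (flat model forces $\bT_i=0$) is trivial by naturality. The main obstacle in this plan is really bookkeeping: the $\mathrm{U}(1)$-equivariance check and the curvature computation both involve tracking weights and signs across complex conjugate entries, and it is here that a subtle mismatch between the weights of $(\theta^1,\theta^2)$ and the adjoint action of the chosen $\mathrm{U}(1)\subset\mathrm{SU}(1,1)$ would most plausibly cause the argument to break, so this is the step I would perform with the most care.
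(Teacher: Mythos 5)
Your proposal is correct and follows essentially the same route as the paper: the Cartan geometry is the repackaging of the $\{e\}$-structure from \S\,\ref{sec:structure-bundle}, the curvature is read off from \eqref{eq:dtheta12-on-cG} and \eqref{eq:dphi-streqs}, the converse is the descent of $\tfrac{\ri}{2}\theta^1\w\ov{\theta^1}$ (the paper phrases it via a section $s\colon M\to\cG$), and flatness follows from $T_1=T_2=0\Rightarrow T_3=0$ plus the standard rigidity of curvature-free Cartan geometries. The only slip is a dropped factor of $\ri$: the correct identity is $T_3=\tfrac{\ri}{2}(T_{2;\bar 1}-T_{1;2})$, which together with \eqref{eq:one-Bianchi-T2-T1} (asserting that $T_{2;\bar 1}-T_{1;2}$ is real) gives $T_3\in\ri\RR$ as you conclude.
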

\begin{proof} 
As a result of our discussion in  \ref{sec:structure-bundle}, a principal $\mathrm{U}(1)$-bundle $\cG\to M$ is canonically defined for any pre-K\"ahler structure.   By construction, it is straightforward to check that  $\varphi$ satisfies the properties in Definition \ref{def:cart-geom-definitio}.  The structure equations  \eqref{eq:dtheta12-on-cG} and \eqref{eq:dphi-streqs}  on  $\cG$ coincide with the Cartan curvature \eqref{eq:Cartan-conn-curv} and \eqref{eq:cartan-curv-prekahler}. Note that by \eqref{eq:dphi-streqs} one has
  \begin{equation}\label{eq:T3-Bianchi}
    T_3=\tfrac{\ri}{2}(T_{2;\bar 1}-T_{1;2})=-\ov{T_3}
  \end{equation}
  in \eqref{eq:cartan-curv-prekahler}.  To see that the Cartan connection $\varphi$ takes value in $\RR^2\rtimes\mathfrak{sl}(2,\RR),$ we have used the isomorphism $\mathfrak{sl}(2,\RR)\cong\mathfrak{su}(1,1).$

  Conversely, taking any section $s\colon M\to \cG,$ then $(g,\omega),$ where $g=s^*\theta^1 s^*\ov{\theta^1}$ and $\omega=s^*\theta^1\w s^*\ov{\theta^1},$ defines a pre-K\"ahler structure on $M.$

  Lastly, if $\bT_1=\bT_2=0$ holds then, by \eqref{eq:T3-Bianchi}, one has $T_3=0,$ and, consequently, $\exd\varphi+\varphi\w\varphi=0,$ i.e. $\varphi$ is the Maurer--Cartan form for the Lie algebra of $\RR^2\rtimes\mathrm{SL}(2,\RR),$ which by the first part of the theorem is equipped with a canonical pre-K\"ahler structure. 
\end{proof} 

\subsection{Parametric expressions}\label{sec:param-expr}

Staring with a (local) potential function $\rho=\rho(z^1,z^2,\ov{z^1},\ov{z^2}),$ we express the structure functions  $T_1$ and $T_2$ in terms of $\rho.$ The expressions can have singularities in general. However, any such singularities can always be removed by shifting the singular point to the origin and then transforming the $\rho$ representation with a change of coordinates of the form
\begin{equation}\label{eqn: removing coordinate expression singularities}
(z^1,z^2) \mapsto (z^1+\epsilon z^1z^2,z^2)
\end{equation}
for some $\epsilon\in \mathbb{C}$. In this new coordinate system around the origin, the following computation can be carried out  without producing singularities (see Remark \ref{rem: removing singularities}).

  Let $H=[\rho_{i\ov j}],$ where $\rho_{i\ov j}=\rho_{z^i\ov{z^j}}$ and   $\ov{\rho_{i\ov j}}=\rho_{j\ov i}$, denote the $(1,1)$ Hessian matrix of $\rho.$ Being pre-K\"ahler implies that $H$ has rank 1 and, therefore, $\det(H)=0$. Since $H$ is not identically zero, one can assume  $\rho_{1\ov{1}}\neq 0,$ as a result of which one has
\[H=\def\arraystretch{1.5}
    \begin{pmatrix}
    \rho_{1\ov 1} & \rho_{1\ov 2}\\
    \rho_{2\ov 1} &\tfrac{\rho_{1\ov 2}{\rho}_{2\ov 1}}{\rho_{1\ov 1}}
  \end{pmatrix}.
\]
In order to find a choice of adapted 1-form $\theta^1,$ one notes that the symmetric bilinear form $g\in\Gamma(\mathrm{Sym}^2 T^*M)$ in \eqref{eq:omega-g-1-adapted-4D} can be written as
\[g=\rho_{i\ov j}\exd z^i\exd \ov{z^j}=\theta^1\cthetao.\]
Setting
\[R= \def\arraystretch{1.5}
  \begin{pmatrix}
    0 & 0\\
    \ri \sqrt{\rho_{1\ov 1}} & \ri\tfrac{ \rho_{2\ov 1}}{\sqrt{\rho_{1\ov 1}}}
  \end{pmatrix},
\]
 one has $H=R^\top\ov{R},$ where $R^\top$ and $\ov{R}$ denote the transpose and  complex-conjugate of $R,$ respectively.
Thus, a choice of $\theta^1$ adapted via the matrix $R$  is given by
\begin{equation}\label{eq:theta1-diagonalized}
  \theta^1=\ri \sqrt{\rho_{1\ov 1}}\exd z^1+\ri\tfrac{ \rho_{2\ov 1}}{\sqrt{\rho_{1\ov 1}}}\exd z^2.
\end{equation}

 Completing $\theta_1$ to a $3$-adapted coframe $(\theta_1,\theta_2)$ determines a section  $s\colon M\to \mathcal{G}$ of the principal $\mathrm{U}(1)$-bundle $\cG\to M$ in Theorem \ref{thm:cart-geom-descr}. For the remainder of the section we will work with the pull-back 1-forms  $(s^*\theta^1,s^*\theta^2,s^*\psi)$ in Theorem \ref{thm:cart-geom-descr} and, by abuse of notation, will drop the pull-back symbol $s^*$ in our expressions. As result, the pull-back of the  $1$-forms $\theta^2$ and $\psi$ can be expressed as
\begin{equation}\label{eq:theta2-psi}
  \theta^2=A_i\exd z^i
  \quad\mbox{ and }\quad
   \psi=B_i\exd z^i+\ov{B_i}\exd\ov{z^i}
  \end{equation}
  for some functions $A_1,A_2,B_1,B_2$ on $M$ that depend  on $(z^1,z^2,\ov{z^1},\ov{z^2})$.

  With such $\theta^1$,  $\theta^2$,  and $\psi$,  the identity $(\exd\theta^1-\ov{\theta^1}\w\theta^2)\w\theta^1=0$ from \eqref{eq:Cartan-conn-curv} determines $A_2$ to be
  \[A_2=A_1\tfrac{\rho_{2\ov 1}}{\rho_{1\ov 1}}-\left(\tfrac{ \rho_{2\ov 1}} {\rho_{1\ov 1}}\right)_{\ov{z^1}}.\]
  Subsequently, $\exd\theta^1+\ri\psi\w\theta^1-\ov{\theta^1}\w\theta^2=0$  determines $B_1$ and $B_2$ as
   \[B_1=-\ri \ov{A_1}-\tfrac\ri 2 \tfrac{\rho_{11\ov 1}}{\rho_{1\ov 1}},\quad B_2=\ri\tfrac{\rho_{2\ov 1}}{\rho_{1\ov 1}} \ov{A_1}-\tfrac \ri 2\tfrac{\rho_{12\ov 1}}{\rho_{1\ov 1}}. \]
Lastly, the identity  $(\exd\theta^2+2\ri\psi\w\theta^2)\w\theta^1=0$ from \eqref{eq:Cartan-conn-curv} is used to find $A_1$ to be
\[
  \begin{aligned}
        A_1=&\tfrac 13\left(\ln C\right)_{\ov{z^1}},\\
    \end{aligned}
  \]
  where
  \[C=\tfrac{1}{\rho_{1\ov 1}}\left(\tfrac{\rho_{2\ov 1}}{\rho_{1\ov 1}}\right)_{\ov{z^1}}=\tfrac{1}{\rho_{1\ov 1}^{3}}\left(\rho_{2\ov 1\ov 1}\rho_{1\ov 1}-\rho_{1\ov 1\ov 1}\rho_{2\ov 1}\right).\]
  In terms of the 3-adapted coframe above, the structure functions $T_1 $ and $T_2$ can be  computed in terms of the fifth jet of $\rho$ to be
  \[
    \begin{aligned}
       T_1&=-\tfrac{1}{3\rho_{1\ov 1}}\left(\ln C\right)_{\ov{z^1 z^1}}+\tfrac{2}{9\rho_{1\ov 1}}\left(\ln C\right)_{\ov{z^1}}\left((\ln C)_{\ov{z^1}}+\tfrac{3 \rho_{1\ov 1\ov 1}}{2\rho_{1\ov 1}}\right),\\      
    T_2&=  \tfrac{\ri\rho_{2\ov 1}}{3 C\rho_{1\ov 1}^{5/2}}\left(\ln C\right)_{\ov{z^1} z^1}
    -\tfrac{\ri}{3C\rho_{1\ov 1}^{3/2}}
    \left(\ln C\right)_{\ov{z^1} z^2}-\tfrac{\ri}{\rho_{1\ov 1}^{1/2}}\left(\ln C\right)_{z^1}
    -\tfrac{2\ri}{3\rho_{1\ov 1}^{1/2}}\left(\ln \ov{C}\right)_{z^1}-\tfrac{2\ri \rho_{11\ov 1}}{\rho_{1\ov 1}^{3/2}}.
    \end{aligned}
  \]

\begin{remark}\label{rem: removing singularities}
The formulas for $A_i$, $B_i$, and $T_i$ have singularities at the origin if and only if $\rho_{1\bar 2}\rho_{1\bar 1 \bar 1}=\rho_{1 \bar 1\bar 2}\rho_{1\bar 1}$. Letting $\rho^\prime$ denote the transformation of $\rho$ achieved by \eqref{eqn: removing coordinate expression singularities}, we have
\[
\rho^\prime_{1\bar 2}(0)\rho^\prime_{1\bar 1 \bar 1}(0)=\rho_{1\bar 2}(0)\rho_{1\bar 1 \bar 1}(0) 
\quad\mbox{ and }\quad
\rho^\prime_{1 \bar 1\bar 2}(0)\rho^\prime_{1\bar 1}(0)=\big(\rho_{1 \bar 1\bar 2}(0)+\epsilon \rho_{1\bar 1}(0)\big)\rho_{1\bar 1}(0),
\]
so this section's formulas computed with respect to the coordinates achieved by \eqref{eqn: removing coordinate expression singularities} have no singularities at the origin for all but one value of $\epsilon\in\mathbb{C}$ because $\rho_{1\bar{1}}(0)\neq 0$ by assumption.
\end{remark}

\begin{example}\label{exa:homog-surf-flat}[Flat model]
The 3-adapted coframe for the pre-K\"ahler complex surface with potential \eqref{rho-flat-example} is given by
\begin{equation}\label{eq:coframe-exa-flat-model}
      \begin{gathered}
      \theta^1=\frac{-\ri\exd z^1}{\sqrt{1-|z^2|^2}}-\frac{\ri(z^1\ov{z^2}+\ov{z^1})\exd z^2}{(1-|z^2|^2)^{3/2}},\quad \theta^2=\frac{\exd z^2}{|z^2|^2-1},\quad \psi=\frac{\ri\left(\ov{z^2}\exd z^2-z^2\exd\ov{z^2}\right)}{2|z^2|^2-2},
    \end{gathered}
\end{equation}
  using which, one obtains
  \begin{equation}\label{eq:T12-exa-flat-model}
    T_1=T_2=0.
      \end{equation}
\end{example}

\begin{example}[Exmaple \ref{ex: light cone potential}: continued]\label{exa:homog-surf}
Here we continue with pre-K\"ahler complex surfaces defined by potential $\rho_a$ in \eqref{rho-Doubrov-example}.
Following the discussion above, one finds  that  a 3-adapted coframe  is given by
\begin{equation}\label{eq:coframe-exa-homog-surf}
      \begin{gathered}
      \theta^1=\tfrac{\ri}{2} \sqrt{a(a-1)\rho_a}\left(\tfrac{1}{x^1+1}\exd z^1-\tfrac{1}{x^2+1}\exd z^2\right),\quad \theta^2=-\tfrac{a-2}{6(x^1+1)}\exd z^1+\tfrac{a+1}{6(x^2+1)}\exd z^2,\\
      \psi=-\tfrac{\ri(a-2)}{12(x^1+1)}\exd(z^1-\ov{z^1})+\tfrac{\ri(a+1)}{12(x^2+1)}\exd(z^2-\ov{z^2})
    \end{gathered}
\end{equation}
  and, furthermore,
  \begin{equation}\label{eq:T12-exa-homog-surf}
    \begin{aligned}
      T_1&=-\frac{(a+1)(a-2)}{9 a(a-1)}\rho_a^{-1},\qquad T_2&=0,\qquad T_3&=\frac{\ri(1+a)(a-2)}{9a(a-1)}\rho_a^{-1}.
          \end{aligned}
      \end{equation}
      A notable special case in this family is $a=2,-1$, which is equivalent to the pre-K\"{a}hler structure considered in \cite[Section 2]{mok2023elliptic}, and is exactly the case where $T_1=T_2=T_3=0$, i.e. locally equivalent to the flat model.
\end{example}

\subsection{Twistor bundle of symplectic connections on surfaces}
\label{sec:twist-bundle-characterization}
We start with the definition of a symplectic connection and its twistor bundle, which was first introduced in \cite{Twistors2}.
\begin{definition}\label{def:twist-bundle-equi}
Given a $2n$-dimensional symplectic manifold $(N,\sigma)$, a symplectic connection is given by a torsion-free linear connection $\nabla$ on $N$  such that $\nabla\sigma=0.$ The twistor bundle associated to $(\nabla,\sigma)$ is  denoted by $\tau\colon \cT^{(\nabla,\sigma)}\to N$ whose fiber $\cT^{(\nabla,\sigma)}_x:=\tau^{-1}(x)$   is the space of compatible almost complex structures on $T_xN,$ e.g. $J\colon T_xN\to T_xN, J^2=-\mathrm{Id}$ such that $g(\cdot,\cdot):=\sigma(\cdot,J\cdot)$ is a bilinear form of signature $(2n-2q,2q).$
\end{definition}
In this article we will be concerned with  symplectic connections on surfaces. Thus, in Definition \ref{def:twist-bundle-equi} one only needs to consider positive definite bilinear form $g.$   Taking a coframe $(\omega^1,\omega^2)$ on a surface $N,$ a torsion-free linear connection can be defined by its Christoffel symbols $\gamma^i_{jk}$ where
\begin{equation}\label{eq:nabla-connection-forms}
  \nabla\omega^i=-\omega^i_j\otimes \omega^j,\quad \omega^i_j=\gamma^i_{jk}\omega^k.
  \end{equation}
Choosing 1-forms $\omega^i$  so that $\sigma=\omega^1\w\omega^2,$ one obtains
\[\nabla\sigma=-(\omega^1_1+\omega^2_2)\otimes\sigma.\]
As a result, $\nabla\sigma=0$  implies  $\omega^1_1+\omega^2_2=0.$ Furthermore, from  \eqref{eq:nabla-connection-forms} it follows that
\begin{equation}\label{eq:d-omegai}
  \exd\omega^i=-\omega^i_j\w\omega^j,\quad \omega^1_1+\omega^2_2=0.
\end{equation}
By Definition \ref{def:twist-bundle-equi}, the group of admissible coframe transformations of a symplectic connection preserves $\sigma$ and, therefore, is $\mathrm{SL}(2,\RR).$ Hence, similarly to our discussion leading to \eqref{eq:str-eqns-strart}, by lifting the coframe to the principal $\mathrm{SL}(2,\RR)$-bundle $\ccA\to N$ of adapted coframes, one obtains a Cartan geometric description of 2-dimensional symplectic connections. In the statement below, by abuse of notation, we will not distinguish between  $\omega^i_j$'s in \eqref{eq:nabla-connection-forms} and their lift to $\ccA.$ 
\begin{proposition}\label{prop:cartan-conn-symp-conn}
  Symplectic connections  $\nabla$ on a symplectic surfaces $(N,\sigma)$ are in one-to-one correspondence with  Cartan geometries  $(\nu\colon \ccA\to N,\phi)$ of type $(\RR^2\rtimes\mathrm{SL}(2,\RR),\mathrm{SL}(2,\RR))$ where $\phi$  satisfies
  \begin{equation}\label{eq:phi-equiaffine-cartan-conn}
\phi=
  \begin{pmatrix}
    0 & 0 & 0\\
    \omega^1 & \omega^1_1 & \omega^1_2\\
    \omega^2 & \omega^2_1 & -\omega^1_1
  \end{pmatrix},\qquad 
\Phi:=\exd\phi+\phi\w\phi=
  \begin{pmatrix}
    0 & 0 & 0\\
    0 & R_{1}^1\omega^1\w\omega^2 & R^1_2\omega^1\w\omega^2\\
    0 & R_1^2\omega^1\w\omega^2 & -R_1^1\omega^1\w\omega^2
  \end{pmatrix}    
\end{equation}
for some real-valued functions $R_1^2,R_1^1,R_2^1$ on $\ccA,$ and $\nu^*\sigma=\omega^1\w\omega^2.$  The fundamental invariant of $(\ccA\to N,\phi)$ is given by the $\mathrm{SL}(2,\RR)$-invariant symmetric bilinear form
\begin{equation}\label{eq:R-binary-quadric}
  \bR:=s^*\left(R_2^1(\omega^2)^2+2R_1^1\omega^1\omega^2-R_1^2(\omega^1)^2\right)\in \Gamma(\mathrm{Sym}^2(T^*N)),
\end{equation}
for any section $s\colon N\to \ccA.$ The vanishing of $\bR$  implies that $(\nabla,\sigma)$ is locally flat, i.e. locally equivalent to the canonical symplectic connection on $(\RR^2\rtimes\mathrm{SL}(2,\RR))\slash\mathrm{SL}(2,\RR).$
\end{proposition}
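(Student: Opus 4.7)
The plan is to build the $\mathrm{SL}(2,\RR)$-bundle $\nu\colon\ccA\to N$ of coframes $(\omega^1,\omega^2)$ with $\nu^*\sigma=\omega^1\w\omega^2$, lift the connection 1-forms $\omega^i_j$ from \eqref{eq:nabla-connection-forms} to $\ccA$, and assemble them into $\phi$ as in \eqref{eq:phi-equiaffine-cartan-conn}. The condition $\nabla\sigma=0$ is equivalent to the trace-free condition $\omega^1_1+\omega^2_2=0$ (as derived above the statement), so $\phi$ takes values in $\RR^2\rtimes\mathfrak{sl}(2,\RR)$. The three Cartan connection axioms in Definition \ref{def:cart-geom-definitio} then follow from standard facts about principal connections: the linear isomorphism property holds because $(\omega^1,\omega^2)$ are tautological on $\ccA$ while the $\omega^i_j$ restrict to Maurer--Cartan forms of $\mathrm{SL}(2,\RR)$ along fibers of $\ccA\to N$; this restriction also gives the fundamental vector field condition; and $\mathrm{SL}(2,\RR)$-equivariance is the classical gauge transformation rule for a principal connection.

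Next I would compute the Cartan curvature $\Phi=\exd\phi+\phi\w\phi$ block by block. The bottom-left $\RR^2$-block yields the torsion $\exd\omega^i+\omega^i_j\w\omega^j$, which vanishes by \eqref{eq:d-omegai}. The $\mathfrak{sl}(2,\RR)$-block is the usual linear-connection curvature $\exd\omega^i_j+\omega^i_k\w\omega^k_j$, which is horizontal over $N$ (semibasicity of the curvature), and the trace-free condition $\omega^1_1+\omega^2_2=0$ descends to $R^1_1+R^2_2=0$. Since $N$ is two-dimensional, each entry of this block must be a scalar multiple of $\omega^1\w\omega^2$, which produces \eqref{eq:phi-equiaffine-cartan-conn}. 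The converse direction is essentially tautological: given a Cartan geometry of the stated form, extracting the blocks of $\phi$ recovers a coframe and trace-free connection 1-forms satisfying \eqref{eq:d-omegai}, and these descend to a torsion-free linear connection $\nabla$ on $N$ preserving the symplectic form $\omega^1\w\omega^2=\nu^*\sigma$.

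For the invariance of $\bR$: under the right action of $A\in\mathrm{SL}(2,\RR)$, the column $(\omega^1,\omega^2)^\top$ is transformed by $A^{-1}$ while the curvature matrix $R=[R^i_j]\in\mathfrak{sl}(2,\RR)$ is transformed by $\mathrm{Ad}_{A^{-1}}$, i.e.\ by conjugation. The identification of $\mathfrak{sl}(2,\RR)$ with binary quadratic forms in $(\omega^1,\omega^2)$ given by $R\leftrightarrow R_2^1(\omega^2)^2+2R^1_1\omega^1\omega^2-R_1^2(\omega^1)^2$ is precisely the standard $\mathrm{SL}(2,\RR)$-equivariant isomorphism between the adjoint and symmetric-square representations, so the expression in \eqref{eq:R-binary-quadric} is right-invariant and semibasic on $\ccA$, and hence descends to a well-defined symmetric tensor $\bR\in\Gamma(\mathrm{Sym}^2 T^*N)$ independent of the chosen section $s$. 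Finally, $\bR=0$ forces all $R^i_j$ to vanish on $\ccA$, so $\Phi\equiv 0$; then $\phi$ is a Maurer--Cartan form, and by the standard realization theorem $(\ccA,\phi)$ is locally equivalent to the flat model $(\RR^2\rtimes\mathrm{SL}(2,\RR))/\mathrm{SL}(2,\RR)$. The main delicate point will be verifying that the particular combination in \eqref{eq:R-binary-quadric} is the correct $\mathrm{Ad}$-equivariant pairing under this binary-quadratic identification, as this is what ensures $\bR$ descends to $N$.
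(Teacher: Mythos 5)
Your proposal is correct and follows essentially the same route as the paper, which in fact states Proposition \ref{prop:cartan-conn-symp-conn} without a separate proof, treating it as an immediate consequence of the preceding lift of $\sigma$-adapted coframes and the connection forms $\omega^i_j$ to the principal $\mathrm{SL}(2,\RR)$-bundle $\ccA\to N$ (mirroring the construction leading to \eqref{eq:str-eqns-strart}), with torsion-freeness, $\omega^1_1+\omega^2_2=0$, semibasicity of the curvature, and $\dim N=2$ forcing the form of $\Phi$ exactly as you argue. The one step you flag as delicate does go through, and is worth carrying out explicitly: the $\mathrm{SL}(2,\RR)$-equivariant identification of $\mathfrak{sl}(2,\RR)$ with binary quadrics is $R\mapsto JR$ with $J=\left(\begin{smallmatrix}0&-1\\ 1&0\end{smallmatrix}\right)$ (equivalently $\sigma(\cdot,R\,\cdot)$ up to sign), and checking invariance against the Bianchi identities \eqref{eq:1st-Bianchies} shows the invariant quadric is $R^1_2(\omega^2)^2+2R^1_1\,\omega^1\omega^2-R^2_1(\omega^1)^2$, i.e.\ the two off-diagonal curvature components in the printed formula \eqref{eq:R-binary-quadric} should be transposed (consistent with the index pattern of the cubic \eqref{eq:binary-cubic}); since both expressions have the same discriminant and vanish simultaneously, this does not affect the flatness statement.
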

Now we can state our pre-K\"ahler characterization for the twistor bundle of 2-dimensional symplectic connections. 

\begin{theorem}\label{thm:twistor-bundle-characterization}
  There is a one-to-one correspondence between symplectic connections on surfaces and pre-K\"ahler structures on complex surfaces satisfying $\bT_2=0.$
\end{theorem}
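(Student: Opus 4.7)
The strategy is to establish the correspondence at the level of Cartan geometries, exploiting the fact that both structures share the same Lie-algebraic model $\RR^2 \rtimes \mathrm{SL}(2,\RR)$ and differ only in the choice of structure group: $\mathrm{U}(1)$ for the pre-K\"ahler case and $\mathrm{SL}(2,\RR)$ for the symplectic-connection case. Using the isomorphism $\mathrm{SL}(2,\RR)\cong\mathrm{SU}(1,1)$ invoked in Theorem \ref{thm:cart-geom-descr}, $\mathrm{U}(1)$ embeds in $\mathrm{SL}(2,\RR)$ as the maximal compact subgroup stabilizing a compatible complex structure on $\RR^2$. Hence the tower $\ccA\to\ccA/\mathrm{U}(1)\to\ccA/\mathrm{SL}(2,\RR)=N$ identifies the intermediate quotient precisely with the twistor bundle $\cT^{(\nabla,\sigma)}$ of Definition \ref{def:twist-bundle-equi}.

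For the forward direction, given $(\nabla,\sigma)$ with Cartan bundle $(\ccA\to N,\phi)$ as in \eqref{eq:phi-equiaffine-cartan-conn}, I view $\ccA\to\cT^{(\nabla,\sigma)}$ as a principal $\mathrm{U}(1)$-bundle. Decompose $\mathfrak{sl}(2,\RR)\cong\mathfrak{su}(1,1) = \mathfrak{u}(1)\oplus\mathfrak{m}$ and rewrite $\phi$ in an adapted basis: set $\theta^1=\omega^1+\ri\omega^2$ for the $\RR^2$-part, encode the $\mathfrak{m}$-valued component of the connection by $\theta^2,\ov{\theta^2}$, and encode the $\mathfrak{u}(1)$-component by $\ri\psi$. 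A direct verification against Definition \ref{def:cart-geom-definitio} shows that $\phi$, so rewritten, is a Cartan connection of type $(\RR^2\rtimes\mathrm{SL}(2,\RR),\mathrm{U}(1))$ on $\cT^{(\nabla,\sigma)}$, and Theorem \ref{thm:cart-geom-descr} then produces a 2-nondegenerate pre-K\"ahler structure. The crucial observation is that every curvature entry in \eqref{eq:phi-equiaffine-cartan-conn} is a multiple of $\omega^1\w\omega^2=\tfrac{\ri}{2}\theta^1\w\ov{\theta^1}$, so after extracting the $\mathfrak{m}$-component one obtains $\Theta^2=T_1\theta^1\w\ov{\theta^1}$ with no $\theta^1\w\theta^2$ term, i.e.\ $T_2\equiv 0$ and hence $\bT_2=0$.

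For the converse, I start from a 2-nondegenerate pre-K\"ahler complex surface $M$ with $\bT_2=0$ and its canonical Cartan bundle $\cG\to M$ from Theorem \ref{thm:cart-geom-descr}. Imposing $T_2=0$ in the structure equations \eqref{eq:dtheta12-on-cG}--\eqref{eq:dphi-streqs} and reducing modulo the ideal $\{\theta^1,\ov{\theta^1}\}$ gives
\[
\exd\theta^2 \equiv -2\ri\psi\w\theta^2,\qquad \exd\ov{\theta^2}\equiv 2\ri\psi\w\ov{\theta^2},\qquad \exd\psi\equiv \ri\theta^2\w\ov{\theta^2},
\]
which are precisely the Maurer--Cartan equations of $\mathfrak{sl}(2,\RR)\cong\mathfrak{su}(1,1)$ under the identification sending a standard $\mathfrak{sl}(2)$-triple $(H,E,F)$ to $(\ri\psi,\theta^2,\ov{\theta^2})$. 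Combined with the first equation in \eqref{eq:dtheta12-on-cG}, this shows that the rank-3 distribution $\cD=\ker\theta^1\cap\ker\ov{\theta^1}\subset T\cG$ is Frobenius integrable and its leaves carry a local free $\mathrm{SL}(2,\RR)$-action. Taking $N$ to be the local leaf space realizes $\cG\to N$ as a principal $\mathrm{SL}(2,\RR)$-bundle on which $\varphi$ takes the form \eqref{eq:phi-equiaffine-cartan-conn}; Proposition \ref{prop:cartan-conn-symp-conn} then identifies $\varphi$ with a symplectic connection on $N$ whose twistor bundle, by the forward construction, is $\cG/\mathrm{U}(1)=M$.

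The main technical hurdle is promoting the infinitesimal $\mathrm{SL}(2,\RR)$-action on leaves of $\cD$ to an honest (local) principal-bundle structure $\cG\to N$, and pinning down the $\mathfrak{sl}(2,\RR)\cong\mathfrak{su}(1,1)$ identification (with correct signs and $\mathrm{U}(1)$-weights from \eqref{eq:G3-3-adapted}) consistently in both directions so that the two constructions invert one another and the basic invariant $\bR$ from \eqref{eq:R-binary-quadric} transports appropriately to the pre-K\"ahler side. I expect these to be careful bookkeeping rather than conceptually new ingredients, given that the Cartan-geometric framework has already been set up on both sides.
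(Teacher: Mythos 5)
Your proposal is correct and follows essentially the same route as the paper: identify the twistor bundle with the quotient of the symplectic connection's Cartan bundle $\ccA$ by the $\mathrm{U}(1)\cong\mathrm{SO}(2,\RR)$ stabilizer of a compatible complex structure, perform the coframe change $\theta^1=\omega^1+\ri\omega^2$, $\theta^2=\omega^1_1+\tfrac{\ri}{2}(\omega^1_2+\omega^2_1)$, $\psi=\half(\omega^2_1-\omega^1_2)$, and observe that the curvature of $\phi$ is a multiple of $\omega^1\w\omega^2=\tfrac{\ri}{2}\theta^1\w\ov{\theta^1}$, forcing $T_2=0$. Your converse, via integrability of $\ker\theta^1\cap\ker\ov{\theta^1}$ and recognition of the $\mathfrak{sl}(2,\RR)$ Maurer--Cartan equations on its leaves, usefully spells out a step the paper dismisses as ``apparent,'' but it is the same argument in substance.
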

\begin{proof}
  For a 2-dimensional symplectic connection, $\nabla,$ on $(N,\sigma)$ at every point $x\in N$ the structure group, $\mathrm{SL}(2,\RR),$ acts transitively on compatible almost complex structures. 
  Using any section $s\colon N\to\mathcal{A}$, one obtains coframing $(s^*\omega^1,s^*\omega^2)$  on $N$. Via the action of the structure group on such coframing, the  almost complex structure represented in this coframe basis as
\[
J_0=
  \begin{pmatrix}
    0 & -1\\
    1 & 0
  \end{pmatrix}
\]
acts on each (co-)tangent space. The stabilizer of $J_0$ is $\mathrm{SO}(2,\RR)\subset \mathrm{SL}(2,\RR).$

As a result, by Definition \ref{def:twist-bundle-equi}, at $x\in N$ one has
\[\cT_x^{(\nabla,\sigma)}\cong \mathrm{SL}(2,\RR)\slash\mathrm{SO}(2,\RR)\cong \DD^{2},\]
where $\DD^2$ is the real 2-disk. Thus, the twistor bundle can be expressed as an associated bundle to the principal $\mathrm{SL}(2,\RR)$-bundle $\ccA\to N,$ in the following way
\[\cT=\ccA\slash\mathrm{SO}(2,\RR):=\ccA\times_{\mathrm{SL}(2,\RR)}\left(\mathrm{SL}(2,\RR)\slash\mathrm{SO}(2,\RR)\right).\]
Using the Cartan connection \eqref{eq:phi-equiaffine-cartan-conn}, the 1-forms $(\omega^1,\omega^2,\omega^1_2+\omega^2_1,\omega^1_1)$ are semi-basic with respect to the fibration $\ccA\to \cT^{(\nabla,\sigma)}.$ Furthermore, by structure equations \eqref{eq:phi-equiaffine-cartan-conn}, the complex-valued Pfaffian system $\{\omega^1+\ri\omega^2,\omega^1_1+\tfrac{\ri}{2}(\omega^1_2+\omega^2_1)\}$ is integrable. Hence, $\cT^{(\nabla,\sigma)}$ is a complex surface whose holomorphic 1-forms are given by $(\theta^1,\theta^2)$ where
\begin{equation}\label{eq:coframe-change-2}
  \theta^1=\omega^1+\ri\omega^2,\quad \theta^2=\omega^1_1+\tfrac{\ri}{2}(\omega^1_2+\omega^2_1).
  \end{equation}
With respect to this coframe,  $\sigma$ is expressed as $\sigma=\tfrac{\ri}{2}\theta^1\w\ov{\theta^1},$ the symmetric bilinear form $g=\theta^1\ov{\theta^1}$ is well-defined on $\cT^{(\nabla,\sigma)},$ and $\exd\theta^1\equiv \ov{\theta^1}\w\theta^2$ mod $\{\theta^1\}.$ Thus, $\cT^{(\nabla,\sigma)}$ is equipped with a 2-nondegenerate pre-K\"ahler structure and, by inspection, satisfies \eqref{eq:Cartan-conn-curv} and \eqref{eq:cartan-curv-prekahler} where
\begin{equation}\label{eq:psi-T1}
  \psi=\half(\omega^2_1-\omega^1_2),\quad T_1=-\tfrac{1}{4}(R_1^2+R^1_2-2\ri R_1^1),\quad T_2=0,\quad T_3=\tfrac{\ri}{4}(R^1_2-R^2_1).
  \end{equation}
Now, starting from a 2-nondegenerate pre-K\"ahler  complex surface $N$ satisfying $T_2=0,$ it is apparent from the discussion above how to  define a symplectic connection on the leaf space of the kernel of the pre-symplectic form $\omega.$

\end{proof}

 Recall from Theorem \ref{thm:cart-geom-descr} that, assuming $\bT_2=0,$ the condition $\bT_1=0$ implies $T_3=0$ and, therefore, the pre-K\"ahler structure is flat. At the level of the symplectic connection, one  obtains from  \eqref{eq:psi-T1}, that  $\bT_1=0$ implies $R^1_1=0$ and $R^1_2=-R^2_1$, and the vanishing $T_3=0$ implies $R^1_2=R^2_1=0,$ i.e. the symplectic connection is flat.

\begin{remark}
There is a class of 5-dimensional 2-nondegenerate CR structures arising as the twistor bundle of contact projective  3-manifolds (see Definition \ref{def:contact-proj-str} and the proof of Proposition \ref{prop:Bochner-flat-pre-Kahler}).  It would be interesting to find a characterization of such CR structures in the spirit of Theorem \ref{thm:twistor-bundle-characterization}. 
\end{remark}

\subsubsection{Remark on equiaffine surfaces and tubification} 
\label{sec:remark-equi-surf-tubification}
We would like to highlight a surprising link between Levi nondegenerate CR hypersurfaces and $2$-nondegenerate CR hypersurfaces in $\mathbb{C}^3$ determined by intermediary relationships to symplectic connections on surfaces.

The \emph{tubification} of an embedded equiaffine surface $N\subset \AAA^3$ results in a CR hypersurface  $N^\CC\subset\CC^3$, and the (non)degeneracy of equiaffine first fundamental form of $N$ determines Levi (non)degeneracy  of $N^\CC.$ 
It is known that an embedded equiaffine surface $N\subset \AAA^3$ is equipped with a natural symplectic connection. In particular,  the $\mathrm{SL}(2,\RR)$-invariant symmetric bilinear form  \eqref{eq:R-binary-quadric} is referred to as the \emph{equiaffine Weingarten form} for an  embedded nondegenerate equiaffine surface \cite[Equation (2.58)]{affine} and is determined by its Blaschke metric and Pick form. Pre-contactification of the twistor bundle of this symplectic connection defines a $2$-nondegenerate CR hypersurface in $\mathbb{C}^3$.

These observations result in a map from the space of tubular Levi nondegenerate CR hypersurfaces in $\mathbb{C}^3$ with distinguished \emph{tube translation symmetries} to $2$-nondegenerate CR hypersurfaces in $\mathbb{C}^3$ with a distinguished transverse symmetry. Starting with an embedded nondegenerate equiaffine surface $N\subset \AAA^3,$ the tube $N^\CC\subset \CC^3$ is, by construction, an $\mathbb{R}^3$-principle bundle over $N$ whose principle action generates a distinguished 3-dimensional abelian CR symmetry algebra of \emph{tube translations}. However, using the induced symplectic connection on  $N,$ its twistor bundle defines a 2-nondegenerate pre-K\"ahler structure, which, via pre-contactification, results in a  $2$-nondegenerate CR hypersurface $S\subset \mathbb{C}^3$ with a distinguished transverse infinitesimal symmetry. The resulting 2-nondegenerate structure on $S$ is independent of the equiaffine embedding, unlike the nondegenerate CR hypersurface $N^\CC.$  For example,  embedded equiaffine surfaces with nondegenerate first equiaffine fundamental form whose equiaffine Weingarten form vanishes are referred to as \emph{improper affine spheres} e.g. see \cite[Section 3.1.1]{affine}. As a result, the pre-K\"ahler structure defined on the twistor bundle of their corresponding symplectic connection is flat, despite the fact that the tubification of improper affine spheres results in inequivalent Levi nondegenerate CR 5-manifolds. That is, the structure on $S$ is always flat while there are many non-flat possibilities for $N^\mathbb{C}$ defined by improper affine spheres.

This relationship is unexpected and will be explored more in a forthcoming work.

\subsubsection{Remark on $\bT_1=0$}\label{sec:remark-bt_1=0}
There is a characterization of 2-nondegenerate pre-K\"ahler complex surfaces satisfying $\bT_1=0$ which we do not intend to fully elaborate on here since it is rather technical and will be pursued elsewhere. In a nutshell, in the real analytic category, complexifying the manifold and using $(\eta^1,\eta^2,\eta^3,\eta^4)$ to denote the complexification of $(\theta^1,\theta^2,\ov{\theta^1},\ov{\theta^2}),$ the complixification of  structure equations \eqref{eq:dtheta12-on-cG} for any pre-K\"ahler complex surfaces shows that the  leaf space  of $\{\eta^1,\eta^2,\eta^3\},$ denoted by $Q,$ is a complex 3-fold with a contact distribution   $\cD:=\ker\{\eta^1\}$ endowed with a splitting $\cD=\ell_1\oplus\ell_2$ where $\ell_1=\langle\partial_{\eta^2}\rangle$ and $\ell_2=\langle\partial_{\eta^3}\rangle.$ As a result, $Q$ is endowed with a 3-dimensional \emph{complex pseudo-product structure}, also known as \emph{complex para-CR structure}. Furthermore,  $Q$ can be locally identified as $\PP T R$ where $R$ is the 2-dimensional local leaf space of $\{\eta^1,\eta^2\}$ which is said to be equipped with a \emph{complex path geometry}  and is locally  associated with the point equivalence class of a scalar complex second order ODE. Using the structure equations \eqref{eq:dtheta12-on-cG}, one obtains that the  bilinear form $(\eta^3)^2$ is well-defined up to a scale on the vertical bundle of $Q\to R.$ Such geometric structures are referred to as \emph{orthopath geometries} in \cite[Definition 3.4]{MS-cone} which in this case turn out to be \emph{variational orthopath geometries} \cite[Section 3.3]{MS-cone}.

If $\bT_1=0$ holds, it follows from  \eqref{eq:dtheta12-on-cG} that $\{\eta^2\}$ is integrable and, therefore, $Q$ is fibered over a complex curve which  locally corresponds to the \emph{fiber equivalence class} of  scalar second order ODEs as defined in, e.g. \cite{HK-ODE}.  Careful inspection shows  that   such complex second order ODEs are highly constrained and are given explicitly as
  \begin{equation}\label{eq:ODE-T1-0}
    y''=a_2(x,y)(y')^2+a_1(x,y)y'+a_0(x,y),\quad\text{where}\quad  \tfrac{\partial}{\partial y}a_1(x,y)-2 \tfrac{\partial}{\partial x}a_2(x,y)=0,
      \end{equation}
 for some functions $a_2,a_1,a_0$ on the complex surface $R.$ As a result, such a complex path geometry defines a complex projective structure on the complex surfaces $R$ whose projective holonomy algebra takes value in a maximal parabolic subalgebra of $\mathfrak{sl}(3,\CC).$

 Unlike Theorem \ref{thm:twistor-bundle-characterization}, although the complexification of each pre-K\"ahler complex surface uniquely determines a complex orthopath geometry, this correspondence is not one-to-one.  For instance,  the local generality of  complex scalar ODEs \eqref{eq:ODE-T1-0} under fiber equivalence relation $x\to \tilde x=\Upsilon(x)$ and $y\to \tilde y=\chi(x,y)$ is clearly seen to be 1 function of 2 variables, i.e.  the function $\chi(x,y)$ in the pseudo-group of fiber transformations can be used to translate  $a_2(x,y)$  to zero which reduces the generality of the ODEs \eqref{eq:ODE-T1-0} to the function $a_0(x,y)$. Moreover, using the gauge freedom $\Upsilon(x),$ one can put $a_1=0$  and, up to fiber preserving transformations, complex scalar ODEs   $y''=a_0(x,y)$ are  associated to pre-K\"ahler condition $\bT_1=0.$ Such functional generality is strictly smaller than the local generality of pre-K\"ahler structures with $\bT_1=0$ whose generality, using Cartan--K\"ahler analysis, are found to be  3 functions of 2 variables, assuming real analyticity.

 \section{Symmetry reductions of homogeneous 2-nondegenerate CR 5-manifolds}\label{sec:symm-reduct-homog}
 In this section we start by describing pre-Sasakian structures arising from   flat 2-nondegenerate CR 5-manifolds using a Cartan geometric approach. The resulting pre-K\"ahler structures satisfy $\bT_2=0,$ and we characterize the symplectic connections they correspond to, showing also that they satisfy a certain criticality condition for symplectic connections. We then study pre-Sasakian structures defined by homogeneous 2-nondegenerate CR 5-manifolds, which will be used to show that the flat model is the only homogeneous 2-nondegenerate pre-K\"ahler complex surface, up to local equivalence.

\subsection{The flat model}
\label{sec:symm-reduct-flat}

 Our objective here is to study a condition for 2-nondegenerate pre-K\"ahler {\color{violet}geometry} that is analogous to the vanishing of the Bochner tensor \cite{Bochner} in K\"ahler geometry, restricting ourselves to the case of complex surfaces. As was originally observed in  \cite{webster1978pseudo},  Bochner-flat (pseudo-)K\"ahler metrics are locally in one-to-one correspondence with the flat CR structure of appropriate signature together with a choice of infinitesimal symmetry, which, in Webster's terminology, coincides with pseudo-Hermitian structures with vanishing torsion and vanishing 4th order Chern--Moser curvature tensor. In other words, Bochner-flat K\"ahler metrics can be viewed as  Sasakian structures whose corresponding CR structures are flat.

From this viewpoint, in dimension 4 there is a natural  pre-K\"ahler analogue of Bochner-flatness defined as those 2-nondegenerate pre-K\"ahler structures that arise via a symmetry reduction of a flat 2-nondegenerate CR 5-manifold. This is due to the fact that there is a well-posed notion of \emph{flatness} for 5-dimensional $2$-nondegenerate CR structures of hypersurface type, distinguished by vanishing of the curvature invariant defined in \cite{isaev2013reduction}. Such flat structures are locally unique \cite[Corollary 5.1]{isaev2013reduction}, and we recall their local description in the following proposition.

\begin{proposition}\label{prop:flat-model-CR-2nondeg}
  Any flat 5-dimensional 2-nondegenerate CR structure $\widetilde M$ is locally equivalent to $\mathrm{Sp}(4,\RR)\slash H$ where $H=H_0\ltimes H_+\subset P_1,$  $H_0=\mathrm{CO}(2,\RR),$  $H_+\cong \mathrm{Heis}(3)$ is the 3-dimensional Heisenberg subgroup, and $P_1$ is the contact parabolic subgroup in $\mathrm{Sp}(4,\RR)$. A well-known (local) coordinate representation of this hypersurface is $\Im(w)=\rho$, where $\rho$ is given by \eqref{rho-flat-example}. 
\end{proposition}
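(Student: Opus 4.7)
The plan is to combine the local uniqueness of flat 2-nondegenerate CR 5-manifolds from \cite[Corollary 5.1]{isaev2013reduction} with separate identifications of the homogeneous space $\mathrm{Sp}(4,\RR)/H$ and the coordinate hypersurface $\Im(w)=\rho$ as instances of the flat model. By that uniqueness, it suffices to check that each carries a flat 2-nondegenerate CR structure of the prescribed form.

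For the homogeneous description, I would first appeal to the classical fact that the infinitesimal CR symmetry algebra of a flat 2-nondegenerate CR 5-manifold is isomorphic to $\mathfrak{sp}(4,\RR)\cong\mathfrak{so}(3,2)$, which has dimension $10$, so that locally $\widetilde M = G/H$ with $G = \mathrm{Sp}(4,\RR)$ and $\dim H = 5$. Since CR symmetries preserve the codimension-one CR distribution, $H$ sits inside the contact parabolic $P_1 \subset G$. Decomposing $P_1 = L_0\ltimes N$ into its reductive Levi factor and unipotent radical, and imposing that $H$ also preserve the Freeman filtration $\mathcal{K}\subset T^{1,0}\widetilde M$, I would extract $H_0 = \mathrm{CO}(2,\RR)$ from $L_0$ (the scalings and rotations preserving the induced nondegenerate form on $T^{1,0}/\mathcal{K}$) and a three-dimensional Heisenberg subgroup $H_+\cong\mathrm{Heis}(3)$ from $N$, with $\dim H = 2+3 = 5$ matching as required.

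For the coordinate representation, I would invoke the pre-K\"ahler calculation performed in Example~\ref{exa:homog-surf-flat}, which shows that the pre-K\"ahler structure on $\{|z^2|<1,\,|\Re(z^1)|<1\}$ with potential \eqref{rho-flat-example} has vanishing basic invariants $T_1 = T_2 = 0$, and is thus flat in the sense of Theorem~\ref{thm:cart-geom-descr}. By the pre-K\"ahler/pre-Sasakian correspondence of Theorem~\ref{thm: correspondence theorem}, the associated pre-Sasakian hypersurface $\Im(w) = \rho(z)$ (with transverse symmetry $\partial_u$) has this flat pre-K\"ahler structure as its pre-Sasakification, and hence its underlying CR structure is itself 2-nondegenerate and flat. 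The uniqueness statement then identifies this hypersurface locally with $G/H$ above.

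The step I expect to be the main obstacle is transferring flatness from the pre-K\"ahler side back to the CR hypersurface at the level of the Isaev--Zaitsev curvature invariant of \cite{isaev2013reduction}. A direct computation of that invariant on $\Im(w)=\rho$ is feasible but tedious; the cleanest alternative I would pursue is to identify this hypersurface with the classical tube over the future light cone $\{x_1^2+x_2^2-x_3^2=0,\,x_3>0\}\subset\RR^3$, on which a copy of $\mathrm{SO}(3,2)$ acts linearly and transitively, making both the $10$-dimensional symmetry algebra and the vanishing of the curvature invariant transparent. Matching this explicit $\mathrm{SO}(3,2)$-action with the isotropy description above then closes the argument.
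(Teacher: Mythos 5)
The paper offers no proof of this proposition at all: it is presented as a recollection of known facts, with local uniqueness of the flat model cited from \cite[Corollary 5.1]{isaev2013reduction} and the description of $H$ and the light-cone potential $\rho$ taken as standard (the decomposition $P_1\cong P_0\ltimes P_+$ with $H_+=P_+=\mathrm{Heis}(3)$ is simply reasserted later, in the proof of Proposition \ref{prop:Bochner-flat-pre-Kahler}). So the comparison here is between your reconstruction and the literature the paper leans on, and your overall strategy — uniqueness of the flat model plus separate verification that $\mathrm{Sp}(4,\RR)/H$ and the coordinate hypersurface both realize it — is the right one. Two steps need repair, however. First, your justification that $H\subset P_1$ (``since CR symmetries preserve the codimension-one CR distribution'') does not work as stated: $P_1$ is the stabilizer of a point of the contact \emph{three}-manifold $\mathrm{Sp}(4,\RR)/P_1$, not of the CR distribution on the $5$-manifold. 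The correct mechanism is that the Levi kernel of a uniformly $2$-nondegenerate structure is an integrable rank-one complex distribution whose local leaf space is a $3$-dimensional contact manifold on which the symmetry group still acts; the isotropy of a point of $\widetilde M$ is then contained in the isotropy of its image leaf, which is $P_1$. This is precisely the equivariant fibration $\widetilde M\to\widetilde N=\mathrm{Sp}(4,\RR)/P_1$ that the paper exploits in \ref{sec:crit-spec-sympl}.

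Second, and more seriously, the inference ``the pre-K\"ahler structure with potential \eqref{rho-flat-example} is flat, hence the underlying CR hypersurface is flat'' is not available at this point: flatness of the $(\RR^2\rtimes\mathrm{SL}(2,\RR),\mathrm{U}(1))$ Cartan geometry of the quotient does not a priori control the curvature of the $5$-dimensional CR structure upstairs, and the results that would close this gap (Propositions \ref{prop: hom implies symm red} and \ref{prop:symm-red-homog-CR-flat-pre-kahl}) are proved later and themselves rely on Proposition \ref{prop:flat-model-CR-2nondeg}, so invoking them would be circular. Note also that the same flat CR hypersurface carries the non-flat pre-K\"ahler reduction of Example \ref{exa:homog-surf} with $a=\tfrac12$, so the passage between the two notions of flatness is genuinely delicate. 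You correctly identify the fix — recognize $\Re(w)=\rho$ (equivalently $\Im(w)=\rho$ after $w\mapsto\ri w$) as a local graph of the tube over the future light cone — but be aware that $\mathrm{SO}(3,2)$ does \emph{not} act linearly on that tube: only a $7$-dimensional affine subgroup does, and the remaining three infinitesimal symmetries are non-affine. What you actually need, and what is true, is that the tube admits a transitive local action of a group locally isomorphic to $\mathrm{Sp}(4,\RR)$, which forces the symmetry dimension to be maximal and hence the structure to be flat. With those two corrections your argument is sound.
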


\begin{remark}\label{rem: general k-nondegenerate flat}
An analogous notion of \emph{flat model} is not yet developed for general $k$-nondegenerate CR structures in higher dimensions. Even in the most studied case of $2$-nondegenerate hypersurfaces, the picture is rather complex, although, for particular values of certain invariants, termed \emph{CR symbols}, there is a well-defined notion of \emph{flat models}, e.g. see \cite[Section 6]{SZ23}. Demonstrating their abundance, there are $9$ homogeneous models in dimension $7$, $20$ are known in dimension $9$, and $40$ are known in dimension $11$, \cite{sykes2025}. Each such model gives rise to distinct homogeneous $2$-nondegenerate pre-K\"{a}hler structures via the correspondence in Theorem \ref{thm: correspondence theorem}.
\end{remark}

Let us write  the right invariant Maurer--Cartan forms on $\mathrm{Sp}(4,\RR)$ as
\begin{equation}\label{eq:flat-5D-2nondeg-CR}
      \def\arraystretch{1.5}
\eta=  \begin{pmatrix}
    -\half\psi_2 & \tfrac 14{\alpha_1} & \tfrac 14 \ov{\alpha_1} & \half \alpha_0\\
    {\ov{\widetilde\theta^1}} & -\ri\psi_1 & \ov{\check\theta^2} & \half \ri\ov{\alpha_1}\\
    \widetilde\theta^1 & \check\theta^2 & \ri\psi_1 & -\half \ri{\alpha_1}\\
    2\widetilde\theta^0& -\half \ri\widetilde\theta^1 & \half \ri\ov{\widetilde\theta^1} & \half\psi_2
  \end{pmatrix},
\end{equation}
where $(\widetilde\theta^0,\widetilde\theta^1,\check\theta^2,\ov{\widetilde\theta^1},\ov{\check\theta^2})$ are semi-basic with respect to the fibration $\mathrm{Sp}(4,\RR)\to \widetilde M=\mathrm{Sp}(4,\RR)\slash H.$

Identifying the algebra of infinitesimal symmetries with $\mathfrak{sp}(4,\RR)$, let $V\in \mathfrak{sp}(4,\RR)$ be an infinitesimal symmetry for the CR structure, which implies
\begin{equation}\label{eq:infin-symmetry}
  \cL_V\eta=0.
\end{equation}
Furthermore, $V$ being transverse means  $V\im\wt\theta^0\neq 0$ on $\mathrm{Sp}(4,\RR)$ which, after taking a section $s\colon\widetilde M\to \mathrm{Sp}(4,\RR),$ implies that $V$ is transverse to the corank 1 distribution $\ker\{s^*\wt\theta^0\}$ on $\wt M.$ This transversality occurs almost everywhere, which is sufficient for this section's local analysis.

In the coordinate system with respect to which $\rho$ is given by \eqref{rho-flat-example}, there is a section  $s\colon \widetilde M\to \mathrm{Sp}(4,\RR)$ such that
\begin{equation}\label{eq:local-coframe-2nondeg-CR-5D}
  \begin{gathered}
    s^*\wt\theta^0=\,\exd \Re(w)+\Re\left(\frac{\ri(z^1\ov{z^2}+\ov{z^1})}{4(|z^2|^2-1)}\exd z^1-\frac{\ri(z^1\ov{z^2}+\ov{z^1})^2}{8(|z^2|^2-1)^2}\exd z^2\right),\\
    	s^*\wt\theta^1=\, -\frac{\ri}{\sqrt{1 - |z^{2}|^2}} \exd z^{1}-\frac{\ri(z^{1} \overline{z^{2}} + \overline{z^{1}})}{\left(1 - |z^{2}|^2\right)^{\frac{3}{2}}} \exd z^{2},
   	\quad\mbox{ and }\quad
	s^*\check\theta^2=\frac{1}{|z^{2}|^2-1} \exd z^{2}.
  \end{gathered}
\end{equation}

By lifting the coframe above via the action of $H,$ we obtain a coordinate system adapted to the fibration  $\mathrm{Sp}(4,\RR)\to\wt M.$ 
Let us represent $H$ as
\begin{equation}\label{eq:strgroup-H}
  H=\begin{pmatrix}
      \tfrac 1{r_2} &  0 & 0 & 0\\
      0 & e^{-\ri r_1} & 0 & 0 \\
      0 & 0 & e^{\ri r_1} & 0\\
      0 & 0 & 0 & r_2
    \end{pmatrix}\begin{pmatrix}
      1 & \half {s_1}  & \half \ov{s_1} & r_0\\
      0 & 1 & 0 & \ri \ov{s_1} \\
      0 & 0 & 1 & -\ri{s_1}\\
      0 & 0 & 0 & 1
    \end{pmatrix},
  \end{equation}
  where $ r_0,r_1,r_2\in\RR$ and $s_1\in\CC.$
Recall that on  $\mathrm{Sp}(4,\RR)$ the transformation of $\eta$ by the action of $\bh\in H$ along the fiber at $p\in\mathrm{Sp}(4,\RR)$ is given by
\begin{equation}\label{eq:gauge-change}
  \eta(p)\to \eta(\bh^{-1}p)=\bh^{-1}\eta\bh+\bh^{-1}\exd\bh.
\end{equation}
As a result of \eqref{eq:gauge-change}, the lifted coframe $\bh^{-1}\eta\bh,$ together with $\bh^{-1}\exd\bh,$ result in  a coframing on  $\mathrm{Sp}(4,\RR)$  in terms of  local coordinates $(z_0,z_1,z_2,\ov{z_1},\ov{z_2},r_0,r_1,r_2,s_1,\ov{s_1}).$ Using \eqref{eq:gauge-change}, the action of $H$ on  $\wt\theta^0$ is found to be
\[\wt\theta^0\to \tfrac{1}{r_2^2}\wt\theta^0.\]
Since $V$ is transverse to $\ker\{\wt\theta^0\},$ there is a unique choice of $r_2$ for which 
  \begin{equation}  \label{eq:disting-theta0}
    V\im \widetilde\theta^0=1,
    \end{equation}
    The function $r_2\in C^\infty(M,\RR)$ defines a section $t_1\colon  \wt\cG_1\to \mathrm{Sp}(4,\RR)$ where  $\wt\cG_1:=\mathrm{Sp}(4,\RR)\slash \mathrm{\RR}^*$ is a principal $(\mathrm{SO}(2,\RR)\ltimes H_+)$-bundle over $\wt M.$ Since $r_2$ is a function on $\wt M,$ its differential on $\wt\cG_1$ is semi-basic and therefore, via pull-back, one arrives at
    \[
      \begin{aligned}
            t_1^*\psi_2=& X_{20}\widetilde\theta^0+X_{21}\wt\theta^1+X_{22}\check\theta^2+\ov{X_{21}}\ov{\widetilde\theta^1}+\ov{X_{22}}\ov{\check\theta^2},
\end{aligned}
\]
for some functions $X_{ij}$ on $\wt\cG_1$ wherein, by abuse of notation, we have dropped the pull-back by $t_1$ on the right hand side.

Similarly, using the  pull-back of the transformation \eqref{eq:gauge-change} to $\wt\cG_1,$ one obtains the action of $\mathrm{SO}(2,\RR)\ltimes H_+$ on $X_{20}$ and $\wt\theta^1$ to be
\[
  \begin{aligned}
    X_{20}&\to  X_{20}+\Re(\ri X_{22} s_1^2 e^{2\ri r_1}-2\ri e^{\ri r_1} X_{21}{s_1})+4r_0,\\
    \wt\theta^1&\to e^{-\ri r_1}\wt\theta^1+2\ri s_1\wt\theta^0.
    \end{aligned}
\]
Noting that $X_{20}$ is $\mathbb{R}$-valued, by choosing $(r_0,s_1)$ appropriately,  a principal $\mathrm{SO}(2,\RR)$-bundle $\cG_2\to \wt M$ can be defined as
\begin{equation}\label{eq:G_2-def}
  \wt\cG_2=\{p\in \wt\cG_1\ | \ X_{20}(p)=0, V\im \wt\theta^1(p)=0\},
  \end{equation}
  which can be viewed as a section $t_2\colon\wt\cG_2\to\wt\cG_1.$ Defining $t=t_1\circ t_2\colon\wt\cG_2\to \mathrm{Sp}(4,\RR),$ one obtains
  \begin{equation}\label{eq:reductions-G2}
      \begin{aligned} t^*\alpha_0=&X_{00}\widetilde\theta^0+X_{01}\wt\theta^1+X_{02}\check\theta^2+\ov{X_{01}}\ov{\widetilde\theta^1}+\ov{X_{02}}\ov{\check\theta^2}+ X_{0}\psi_1,\\ t^*\alpha_1=&X_{10}\widetilde\theta^0+X_{11}\wt\theta^1+X_{12}\check\theta^2+{Y_{11}}\ov{\widetilde\theta^1}+{Y_{02}}\ov{\check\theta^2}+ X_{1}\psi_1,\\
 t^*\psi_2=&X_{21}\wt\theta^1+X_{22}\check\theta^2+\ov{X_{21}}\ov{\widetilde\theta^1}+\ov{X_{22}}\ov{\check\theta^2}
\end{aligned}
  \end{equation}
  for some functions $X_i,X_{ij},Y_{ij}$ on $\wt\cG_2$.

Inspecting \eqref{eq:infin-symmetry} for the distinguished 1-form $\wt\theta^0$ satisfying \eqref{eq:disting-theta0}, it follows that
\begin{equation}\label{eq:Lv-theta0}
  0=\cL_V\wt\theta^0=(\exd\iota_V+\iota_V\exd)\wt\theta^0=\iota_V\exd\wt\theta^0=\iota_V(-\psi_2\w\wt\theta^0+\half\ri\wt\theta^1\w\ov{\wt\theta^1})
\equiv \psi_2\mod\{\wt\theta^0\}.
\end{equation}
Consequently,  using \eqref{eq:reductions-G2},   one obtains
\begin{equation}\label{eq:psi2-zero}
  t^*\psi_2=0.
\end{equation}
Moreover, using the Maurer--Cartan equations, it follows that
\[
  \begin{aligned}
    0=&\exd(t^*\psi_2)=t^*(\exd\psi_2)=t^*(2\alpha_0\w\wt\theta^0+\half \alpha_1\w\ov{\wt\theta^1}+\half\ov{\alpha_1}\w\wt\theta^1).
  \end{aligned}
\]
Solving the relations arising from the insertion of the first two expressions of \eqref{eq:reductions-G2} in the equation above, the expressions in \eqref{eq:reductions-G2} simplify to
  \begin{equation}\label{eq:reductions-G2-dpsi2-0}
      \begin{aligned} t^*\alpha_0=&X_{00}\widetilde\theta^0+X_{01}\wt\theta^1+\ov{X_{01}}\ov{\widetilde\theta^1},\\ t^*\alpha_1=&4\ov{X_{01}}\widetilde\theta^0+X_{11}\wt\theta^1+{Y_{11}}\ov{\widetilde\theta^1},
\end{aligned}
\end{equation}
where $X_{00}, Y_{11}\in C^\infty(\wt\cG_2,\RR)$ and $X_{01},X_{11}\in C^{\infty}(\wt\cG_2,\CC).$

  Using \eqref{eq:disting-theta0} and \eqref{eq:G_2-def},  the infinitesimal symmetry  $V\in\Gamma T\wt\cG_2$ with respect to the frame that is dual to   $(\wt\theta^0,\wt\theta^1,\ov{\wt\theta^1},\check\theta^2,\ov{\check\theta^2},\psi_1)$ can be expressed as
  \begin{equation}  V=\tfrac{\partial}{\partial\wt\theta^0}+Z_2\tfrac{\partial}{\partial\check\theta^2}+\ov{Z_2}\tfrac{\partial}{\partial\ov{\check\theta^2}}+Z_1\tfrac{\partial}{\partial{\psi}_1}
  \end{equation}
  for  $Z_2\in C^\infty(\wt\cG_2,\CC)$ and $Z_1\in C^\infty(\wt\cG_2,\RR).$ In order to rectify $V,$ we transform the coframe on $\wt\cG_2$ as follows:
  \begin{equation}\label{eq:coframe-trans-rectify}
    \check\theta^2\to \wt\theta^2:=\check\theta^2-Z_2\wt\theta^0,\quad \psi_1\to\wt\psi_1:=\psi_1-Z_1\wt\theta^0.
  \end{equation}
  The 1-forms $(\wt\theta^0,\wt\theta^1,\ov{\wt\theta^1},\wt\theta^2,\ov{\wt\theta^2},\wt\psi_1)$ still define a coframe on $\wt\cG_2$ with respect to which one now has
  \begin{equation}\label{eq:rectified-v}
    V=\tfrac{\partial}{\partial\wt\theta^0}.
  \end{equation}
  Inspecting \eqref{eq:infin-symmetry} on $\wt\cG_2$ for the 1-form entries $\wt\theta^0$ and $\wt\theta^1$ of $\eta,$  as given in \eqref{eq:flat-5D-2nondeg-CR},  following computations similar to \eqref{eq:Lv-theta0} and using $V\im t^*\wt\theta^1=0$,  one obtains
  \[X_{11}=-Z_1,\quad Y_{11}=\ri Z_2.\]
  Computing  \eqref{eq:infin-symmetry} on $\wt\cG_2$ for all other entries of $\eta$ will determine the first jet of $Z_1$ and $Z_2$ to be
  \begin{equation}\label{eq:1st-jet-Z1-Z2}
    \begin{aligned}
      \exd Z_1=& \ri X_{01}\wt\theta^1-\ri\ov{X_{01}}\ov{\wt\theta^1}+\ri\ov{Z_2}\wt\theta^2-\ri Z_2\ov{\wt\theta^2}\\
      \exd Z_2=&-2\ov{X_{01}}\wt\theta^1+2\ri Z_1\wt\theta^2-2\ri Z_2\wt\psi_1.
    \end{aligned}
  \end{equation}
  Lastly, by the pull-back of the Maurer--Cartan equations to $\wt\cG_2$,   the first jets of $X_{00}\in C^\infty(\wt\cG_2,\RR)$ and $X_{01}\in C^\infty(\wt\cG_2,\CC)$ in \eqref{eq:reductions-G2-dpsi2-0} are found using 
  \[\exd\alpha_{1}=(\half\psi_2-\ri\psi_1)\w\alpha_1-\ov{\alpha_1}\w\theta^2+\ri\alpha_0\w\theta^1,\quad \exd\alpha_0=\psi_2\w\alpha_0-\tfrac{\ri}{2}\alpha_1\w\ov{\alpha_1},\]
as result of which, one obtains
\begin{equation}\label{eq:1st-jet-X00-X01}
  \begin{aligned}
    \exd X_{01}=&\tfrac 14\ri(  Z_1^2-  Z_2 \ov{Z_2}+  X_{00}) \ov{\wt\theta^1} + \ov{X_{01}} \ov{\wt\theta^2} +  \ri X_{01} \wt\psi_1,\\
    \exd X_{00}=&2( \ri X_{01} Z_1+\ov{X_{01}Z_2}) \wt\theta^1+2( Z_2X_{01} - \ri Z_1 \ov{X_{01}}) \ov{\wt\theta^1}.
  \end{aligned}
\end{equation}
The differential relations \eqref{eq:1st-jet-Z1-Z2} and \eqref{eq:1st-jet-X00-X01} give  a closed Pfaffian system on $\wt\cG_2$. Furthermore, using the definition of coframe derivatives in \ref{sec:conventions}, the equations \eqref{eq:1st-jet-Z1-Z2} and \eqref{eq:1st-jet-X00-X01} imply
\begin{equation}\label{eq:cof-deriv-Z1-Z2}
  Z_1=-\half \ri Z_{2;2},\quad X_{01}=-\half \ov{Z_{2; 1}},\quad X_{00}=-2\ri Z_{2;11}-Z_1^2+Z_2\ov{Z_2}.
\end{equation}

\subsection{Critical and special symplectic connections}  
\label{sec:crit-spec-sympl}

To describe pre-K\"ahler structures arising via  symmetry reductions of flat 2-nondegenerate CR 5-manifolds, let $\cG$ be the leaf space of the integral curves of the infinitesimal symmetry $V,$ defined via the quotient map $q\colon \wt\cG_2\to \cG.$ Using \eqref{eq:infin-symmetry} and the rectification \eqref{eq:rectified-v}, it follows that there is a coframe $(\theta^0,\theta^a,\ov{\theta^a},\psi)$ on $\cG$ such that
\[q^*\theta^0=\wt\theta^0,\quad q^*\theta^1=\wt\theta^1,\quad q^*\theta^2=\wt\theta^2,\quad q^*\psi=\wt\psi_1.\]
By our preceding discussions, such coframing is defined on a principal $\mathrm{U}(1)$-bundle $\cG\to M,$ where $M$ is the leaf space of $\{\theta^1,\theta^2,\ov{\theta^1},\ov{\theta^2}\},$ and therefore, by Theorem \ref{thm:cart-geom-descr},  canonically defines a 2-nondegenerate  pre-K\"ahler complex surface.

Using the structure equations on $\wt\cG_2,$ it is a matter of straightforward computation to show that the structure functions of  such pre-K\"ahler structures, as given in \eqref{eq:cartan-curv-prekahler}, are related to functions $Z_1,Z_2,X_{00},X_{01}$ in the following way
\begin{equation}\label{eq:invariants}
  T_1=-\ri Z_2,\quad T_2=0,\quad T_3=\half  Z_{2;2},\quad T_{1;1}=-\ri Z_{2;1},\quad T_{1;11}=-\ri {Z_{2;11}},
\end{equation}
where we have used the relations \eqref{eq:cof-deriv-Z1-Z2}. Subsequently, \eqref{eq:1st-jet-Z1-Z2} and \eqref{eq:1st-jet-X00-X01} give the closed Pfaffian system defined by the differentials of the  structure functions of such pre-K\"ahler structures, which is 
\begin{equation}\label{eq:Bianchies-closed-system}
    \begin{aligned}
   & \exd T_1+2\ri T_1\psi+2\ri T_3\theta^2-T_{1;1}\theta^1=0\\
   & \exd T_3+\half \ri \ov{T_{1;1}}\theta^1-\ri T_1\ov{\theta^2}+\half \ri T_{1;1}\ov{\theta^1}-\ri\ov{T_1}\theta^2=0\\
   & \exd T_{1;1}+\ri T_{1;1}\psi+\ov{T_{11}}\theta^2-T_{1;11}\theta^1=0\\
 &  \exd T_{1;11}-(\ri \ov{T_{1;1}}T_3-T_{1;1}\ov{T_1})\theta^1-(\ri T_3T_{1;1}-\ov{T_{1;1}}T_1)\ov{\theta^1}=0.
  \end{aligned}
\end{equation}

By Theorem \ref{thm:twistor-bundle-characterization}, since $T_2=0$, this class of pre-K\"ahler complex surfaces correspond to symplectic connections on the leaf space of the pre-symplectic kernel. To characterize this class of symplectic connections, we first recall some facts about contact projective structures from \cite{Fox}.
\begin{definition}\label{def:contact-proj-str}
Given a contact manifold $\wt N,$ a \emph{contact geodesic} of a linear connection $\nabla$ on $\wt N$ is a  geodesic curve with the property that its tangent vector is in the contact distribution everywhere. A \emph{contact projective structure}, $[\nabla],$ on $\wt N$ is an equivalence class of linear connections  on $\wt N$ whose contact geodesics coincide as unparameterized curves and are defined  along every contact direction of each point of $\wt N.$
\end{definition}
Contact projective structures are examples of parabolic geometries whose flat model is given by the homogeneous space $\mathrm{Sp}(2n,\RR)\slash P_1,$ where $\mathrm{Sp}(2n,\RR)$ is the  group of linear automorphisms of a real $2n$-dimensional symplectic vector space $\VV$ and  $P_1$ is the parabolic subgroup that stabilizes a 1-dimensional subspace of $\VV.$ 

Using the notion of special symplectic connections, as defined in \cite{CS-special}, we give the following definition.
 \begin{definition}\label{def:critical-equiaffine-surface}
A 2-dimensional symplectic connection is called \emph{special} if it is locally a symmetry reduction of the flat contact projective 3-manifold, $\mathrm{Sp}(4,\RR)\slash P_1,$ by an  infinitesimal symmetry that is transverse to the contact distribution. 
\end{definition}
Now we can give our characterization of pre-K\"ahler complex surfaces that are symmetry reductions of a flat 2-nondegenerate CR 5-manifold.
\begin{proposition}\label{prop:Bochner-flat-pre-Kahler}
There is a one-to-one correspondence between the family of 2-nondegenerate pre-K\"ahler complex surfaces locally defined via a symmetry reduction of a flat 2-nondegenerate CR 5-manifold and the 2-parameter family of 2-dimensional special symplectic connections, which, among symplectic connections, are characterized by the vanishing of the $\mathrm{SL}(2,\RR)$-invariant symmetric trilinear form
  \begin{equation}\label{eq:binary-cubic}
    \bC:=R^1_{2;2}(\omega^2)^3+(R^1_{2;1}+2R^1_{1;2})(\omega^2)^2\omega^1+(2R^1_{1;1}-R^2_{1;2})\omega^2(\omega^1)^2- R^2_{1;1}(\omega^1)^3\in\Gamma(\mathrm{Sym}^3(T^*N)).
  \end{equation}
\end{proposition}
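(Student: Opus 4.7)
The plan is to first establish the correspondence by identifying the flat CR 5-manifold as a twistor-like bundle over the flat contact projective 3-manifold, then characterize the resulting symplectic connections via a direct computation with the Bianchi system \eqref{eq:Bianchies-closed-system}. Since $H\subset P_1$, there is a canonical fibration $\mathrm{Sp}(4,\RR)/H\to\mathrm{Sp}(4,\RR)/P_1$ with 2-dimensional fibers $P_1/H$, which I would argue parametrize compatible almost complex structures on the contact distribution of the target, realizing the flat CR 5-manifold as the contact-projective analogue of the twistor bundle in Definition \ref{def:twist-bundle-equi}. A transverse infinitesimal CR symmetry $V\in\mathfrak{sp}(4,\RR)$ of $\mathrm{Sp}(4,\RR)/H$ pushes forward to a transverse infinitesimal symmetry of $\mathrm{Sp}(4,\RR)/P_1$, so quotienting produces a commuting square in which the pre-K\"ahler complex surface from the CR reduction (which has $T_2=0$ by \eqref{eq:invariants}) is the twistor bundle over the symplectic surface obtained from the contact projective reduction. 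By Definition \ref{def:critical-equiaffine-surface} this is exactly a special symplectic connection, so the one-to-one correspondence follows from Theorem \ref{thm:twistor-bundle-characterization}. The 2-parameter count should follow from a classification of $\mathrm{Sp}(4,\RR)$-orbits of transverse elements of $\mathfrak{sp}(4,\RR)$ modulo isotropy, consistent with $\mathfrak{sp}(4,\RR)$ having rank $2$.

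For the vanishing of $\bC$ characterizing this class among all symplectic connections, I would translate between the pre-K\"ahler and symplectic-connection coframes via \eqref{eq:coframe-change-2} and \eqref{eq:psi-T1}, yielding the relations $R^1_2-R^2_1=-4\ri T_3$ and $R^1_2+R^2_1-2\ri R^1_1=-4T_1$. Differentiating these and using \eqref{eq:Bianchies-closed-system} to evaluate $\exd T_1,\exd T_3,\exd T_{1;1},\exd T_{1;11}$ in terms of the original coframe, one reads off the coframe derivatives $R^i_{j;k}$ appearing in \eqref{eq:binary-cubic}. A direct computation should show that each of the four real coefficients of $\bC$ vanishes identically for pre-K\"ahler structures in this class, giving necessity of $\bC=0$. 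The $\mathrm{SL}(2,\RR)$-invariance of $\bC$ is guaranteed by its construction as the natural equivariant covariant derivative of the curvature form $\bR$ of Proposition \ref{prop:cartan-conn-symp-conn}.

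For the converse, starting from any symplectic connection $(\nabla,\sigma)$ on a surface with $\bC=0$, Theorem \ref{thm:twistor-bundle-characterization} equips its twistor bundle with a 2-nondegenerate pre-K\"ahler structure satisfying $T_2=0$, and $\bC=0$ forces the functions $T_1,T_3$ and their coframe derivatives to obey a closed Pfaffian system equivalent to \eqref{eq:Bianchies-closed-system}. This rigidity allows the Cartan connection $\varphi$ of Theorem \ref{thm:cart-geom-descr} to prolong canonically to a flat $\mathfrak{sp}(4,\RR)$-valued Cartan connection on a principal $H$-bundle. By Proposition \ref{prop:flat-model-CR-2nondeg}, the associated CR 5-manifold is locally equivalent to $\mathrm{Sp}(4,\RR)/H$, and the direction rectified as $\tfrac{\partial}{\partial\wt\theta^0}$ in \ref{sec:symm-reduct-flat} then realizes $(\nabla,\sigma)$ as a symmetry reduction. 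The main obstacle will be verifying that $\bC=0$ yields exactly the jet-level constraints needed to close this prolongation, which amounts to matching the prolonged Pfaffian data with the system derived in \ref{sec:symm-reduct-flat}; the translation of data between the symplectic-connection coframe and the pre-K\"ahler coframe through \eqref{eq:coframe-change-2} is the most delicate bookkeeping in the argument.
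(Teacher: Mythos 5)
Your proposal follows essentially the same route as the paper: realize the flat CR 5-manifold as the twistor bundle of the flat contact projective 3-manifold $\mathrm{Sp}(4,\RR)/P_1$, pass the transverse symmetry down to obtain a special symplectic connection via Theorem \ref{thm:twistor-bundle-characterization}, and then match the closed Pfaffian system generated by $\bC=0$ (through the coframe translation \eqref{eq:coframe-change-2} and \eqref{eq:psi-T1}) with the system \eqref{eq:Bianchies-closed-system} obtained from the symmetry reduction. The only notable difference is the justification of the two-parameter count: the paper gets one parameter from the result of \cite{CS-special} on reductions of the flat contact projective structure plus a second from the choice of volume form within the preserved homothety class, whereas you appeal to the orbit structure of transverse elements of $\mathfrak{sp}(4,\RR)$; both lead to the same conclusion.
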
 
\begin{proof}
  By our discussion above, the flat model for contact projective 3-manifolds is  $\widetilde N=\mathrm{Sp}(4,\RR)\slash P_1$ and, by Proposition \ref{prop:flat-model-CR-2nondeg}, the flat model for 2-nondegenerate CR 5-manifolds is $\widetilde M=\mathrm{Sp}(4,\RR)\slash H$ where $H\cong H_0\ltimes H_+\subset P_1\cong P_0\ltimes P_+,$   $P_0=\mathrm{GL}(2,\RR),H_0=\mathrm{CO}(2,\RR)$ and $H_+=P_+=\mathrm{Heis}(3).$

  As a result, assuming that the contact manifold is co-oriented, which is always satisfied locally, one arrives at the  fibration $\widetilde M\to \widetilde N$ with fibers  $P_1\slash H\cong \mathrm{GL}^+(2,\RR)\slash \mathrm{CO}(2,\RR)\cong \DD^2.$ In other words, following the same construction as in the proof of Theorem \ref{thm:twistor-bundle-characterization}, $\widetilde M$ can be viewed as the \emph{twistor bundle} of the contact projective structure on $\widetilde N,$ i.e. the bundle of all almost complex structures on the contact distribution of $\widetilde \cC\subset T\widetilde N$ that are compatible with the conformal symplectic 2-form on $\widetilde \cC.$  Following the same steps, one obtains that  the space of such almost complex structures at each tangent space gives rise to the 2-disk bundle $\widetilde M\to \widetilde N.$

  By construction, there is a canonical bijection between infinitesimal symmetries of the flat model of 3-dimensional  contact projective structures  and 2-nondegenerate CR structures on its twistor bundle. It is shown in \cite[Corollary C]{CS-special} that affine connections obtained via symmetry reductions of the flat contact projective structure in dimension $(2n+1)$ define  an $n$-parameter family of \textrm{special symplectic connections} (in the sense of Remark \ref{rem: coarse equivalence}). In our case, the symmetry reductions result in a 1-parameter family of symplectic connections on a  surface $N$, each of which preserve a volume form up to homothety, denoted by $[\sigma]$. A choice of volume form, $\sigma\in[\sigma],$ corresponds to a homothety factor, which determines a symplectic surface $(N,\sigma)$. By Theorem \ref{thm:twistor-bundle-characterization}, the twistor bundle of  such symplectic connections on a corresponding symplectic surface $(N,\sigma)$  uniquely define a 2-nondegenerate pre-K\"ahler complex surface, giving rise to a  2-parameter family \emph{geometrically distinct} of pre-K\"ahler structures (see Remark \ref{rem: coarse equivalence}). 

  For the last part of the theorem, firstly one can show that the binary cubic $\bC$ is invariantly defined either by  finding the induced $\mathrm{SL}(2,\RR)$-action on  $R^i_{j;k}$'s or, infinitesimally, using the Bianchi identities
  \begin{equation}\label{eq:1st-Bianchies}
    \begin{aligned}
      \exd R^1_{2}&=R^1_{2;i}\omega^i+2R^1_1\omega^1_2-2R^1_2\omega^1_1\\
      \exd R^1_1&=R^1_{1;i}\omega^i+R^1_2\omega^2_1-R^2_1\omega^1_2\\
      \exd R^2_1&=R^2_{1;i}\omega^i-2R^1_1\omega^2_1+2R^2_1\omega^1_1,
    \end{aligned}
  \end{equation}
  one can show that the Lie derivative of $\bC$ along the vertical tangent vectors  of  the principal bundle $\ccA\to N$ in Proposition \ref{prop:cartan-conn-symp-conn}, is zero.  The vanishing of $\bC$ implies that the first jet of $R^i_j$'s can be expressed as $R^1_{1;1}$ and $R^1_{1;2}.$
  Subsequently, using the structure equations \eqref{eq:phi-equiaffine-cartan-conn} and the identities  $\exd^2R^i_{j}=0,$ one obtains the higher Bianchi identities
  \begin{equation}\label{eq:2nd-Bianchies}
    \exd R^1_{1;1}=R^1_{1;12}\omega^2+R^1_{1;1}\omega^1_1-R^1_{1;2}\omega^2_1,\quad \exd R^1_{1;2}=R^1_{1;12}\omega^1-R^1_{1;2}\omega^1_1-R^1_{1;1}\omega^1_2.
  \end{equation}
  As a result, the second jet of $R^i_j$'s depend on one function, denoted by $R^1_{1;12}.$ Similarly, one obtains
  \begin{equation}\label{eq:3rd-Bianchi}
    \exd R^1_{1;12}=(R^2_1R^1_{1;2}-R^1_1R^1_{1;1})\omega^1-(R^1_{1}R^1_{1;2}+R^1_{2}R^1_{1;1})\omega^2.
      \end{equation}
      Hence, such symplectic connections define a closed Pfaffian system. Now it is an elemetary task to use the change of coframe \eqref{eq:coframe-change-2} and \eqref{eq:psi-T1} to show that the identities \eqref{eq:1st-Bianchies}, \eqref{eq:2nd-Bianchies} and \eqref{eq:3rd-Bianchi} coincide with \eqref{eq:Bianchies-closed-system} where
      \[T_1=-\tfrac{1}{4}(R_1^2+R^1_2-2\ri R_1^1),\quad T_2=0,\quad T_3=\tfrac{\ri}{4}(R^1_2-R^2_1),\quad T_{1;1}=\half (R^1_{1;2}+\ri R^1_{1;2}),\quad T_{1;11}=\half R^1_{1;12}.\]

\end{proof}
\begin{remark}\label{rmk:sym-red-other-2nondeg-CR}
  Although the first part of Proposition \ref{prop:Bochner-flat-pre-Kahler} reduces to a special case (via Theorem \ref{thm:twistor-bundle-characterization}) of what is studied in \cite{CS-special}, the Cartan geometric viewpoint we adopted to carry out the symmetry reduction in \ref{sec:symm-reduct-flat} has the advantage of being used for  a larger class of geometric structures that are neither  flat nor defined on a  contact manifold c.f. \cite{CS-contactification}. 
For example, it can be applied to the broad class of flat models of $2$-nondegenerate CR structures in higher dimensions discussed in Remark \ref{rem: general k-nondegenerate flat}. 
\end{remark}
\begin{example}[Exmaple \ref{ex: light cone potential}: continued]\label{exa:special-homog-equiaaffine}
 Recall that  pre-K\"ahler surfaces with potential $\rho_a$  in \eqref{rho-Doubrov-example},
  have 3-adapted coframes  \eqref{eq:coframe-exa-homog-surf}, with respect to which 
 $T_1$ and $T_2$ are given as \eqref{eq:T12-exa-homog-surf}.
We would like to find the values of $a$ for which such a pre-K\"ahler complex surface corresponds to a special symplectic connection. To do so, one can use  Bianchi identities \eqref{eq:Bianchies-closed-system} to determine the value of  $a$ for which they are satisfied. Alternatively, one can use the coframe transformation \eqref{eq:coframe-change-2} and \eqref{eq:psi-T1} to find adapted coframe and curvature of the corresponding symplectic connection and, subsequently, determine the value of $a$ for which the binary cubic $\bC$ in \eqref{eq:binary-cubic} is zero. Following the latter, recall that by Theorem \ref{thm:twistor-bundle-characterization}, when $\bT_2=0$ the principal $\mathrm{SL}(2,\RR)$-bundle $\ccA$ in Proposition \ref{prop:cartan-conn-symp-conn} can be identified with the principal $\mathrm{U}(1)$-bundle $\cG$ from Theorem \ref{thm:cart-geom-descr}. Thus, the section $s\colon M\to \cG$ used for the parametric computation in \ref{sec:param-expr} by the initial choice of $\theta^1,$ can be viewed as a local embedding   $s\colon M\to\ccA$. Applying the  coframe change \eqref{eq:coframe-change-2} and \eqref{eq:psi-T1}   to the 3-adapted coframe \eqref{eq:coframe-exa-homog-surf}, one obtains
\[s^*\omega^1=\half(\theta^1+\ov{\theta^1}),\quad s^*\omega^2=\tfrac{-\ri}{2}(\theta^1-\ov{\theta^1}),\quad s^*\omega^1_1=\half(\theta^2+\ov{\theta^2}),\quad s^*\omega^1_2=\ri(\ov{\theta^2}-\theta^2),\quad s^*\omega^2_1=0.\]
It is a matter of  straightforward computation to find that the binary quadric \eqref{eq:R-binary-quadric} and binary cubic \eqref{eq:binary-cubic} take the form
\begin{equation}\label{eq:R-C-rho_a}
  s^*\bR=\frac{4(a+1)(a-2)}{9a(a-1)\rho_a}\left(\omega^2\right)^2,\quad   s^*\bC=-\frac{16(a+1)(a-2)(2a-1)}{27\left(a(a-1)\rho_a\right)^{3/2}}\left(\omega^2\right)^3,\quad 
\end{equation}
As a result, the only possible values of $a$ for which $\bC=0$ are $-1,2$ and $\half.$  Since $\bR=0$ for $a=-1,2,$ these values define a flat pre-K\"ahler structure. When $a=\half,$  one has a non-flat special symplectic connection for which $T_3=\tfrac{1}{2\sqrt{(x^1+1)(x^2+1)}},$ and $T_{1;1}=T_{1;11}=0.$ 
\end{example}

\begin{remark}\label{rmk:Q-quartic-examples}
  It would be interesting to characterize  symplectic connections defined by the potential functions $\rho_a,$ for all $\half< a<2,$  in terms of the vanishing of some higher order  $\mathrm{SL}(2,\RR)$-invariants. For instance, consider the binary quartic $\bQ\in\Gamma(\mathrm{Sym}^4(T^*N))$ defined as
    \begin{multline}\label{eq:binary-quartic} \bQ=R^1_{2;22}(\omega^2)^4+(2R^1_{2;21}+2R^1_{1;22})(\omega^2)^3\omega^1\\+(4R^1_{1;21}-R^2_{1;22}+R^1_{2;11})(\omega^2)^2(\omega^1)^2
      +(2R^1_{1;11}-2R^2_{1;21})\omega^2(\omega^1)^3- R^2_{1;11}(\omega^1)^4,
    \end{multline}
    which is an absolute invariant. Carrying out the computations of  Example \ref{exa:special-homog-equiaaffine} to determine second coframe derivatives of $R^1_2$, one obtains
    \[s^*\bQ=\frac{32(a+1)(a-2)(2a-1)^2}{27\left(a(a-1)\rho_a\right)^{2}}\left(\omega^2\right)^4,\]
Subsequently, it follows that this class of symplectic connections satisfy the invariant condition
\[\bQ-\tfrac{6(2a-1)^2}{(a+1)(a-2)}(\bR)^2=0.\]
    \end{remark}

Now we describe yet another similarity between  this class of pre-K\"ahler complex surfaces and their K\"ahler counterpart. Although  the Bochner tensor of K\"ahler metrics on complex curves is always zero, in \cite{Bryant} a definition is given for Bochner-flatness that coincides with being \emph{extremal}, e.g. see \cite[Remark 1.2]{Rui}. In  \cite{Calabi}  a local description of  extremal K\"ahler metrics is given as those for which     the  Hamiltonian vector field defined by the scalar curvature is an  infinitesimal symmetry. Furthermore, it is shown in  \cite{CS-special} that such K\"ahler metrics on complex curves  correspond to symmetry reductions of flat CR 3-manifolds.

A condition analogous to being extremal has been studied for symplectic connections for which we refer the reader to \cite{Cahen1,Fox-affine} for an overview. To define the condition, we follow  \cite{Fox-affine} and restrict ourselves to the 2-dimensional case, although it holds in any dimension. Let $\mathsf{R}^i_{jkl}$ denote the entries of the curvature tensor for $\nabla.$ In terms of the connection forms $\omega^i_j$ in  \eqref{eq:d-omegai}, they are defined as 
\[ \exd\omega^i_j=-\omega^i_k\w\omega^k_j+\half \mathsf{R}^i_{jkl}\omega^k\w\omega^l.\]
The entries of the Ricci tensor of $\nabla$ are  $\mathsf{Ric}_{ij}:=\mathsf{R}^k_{ikj}.$ In terms of the coefficients of the  Cartan curvature \eqref{eq:phi-equiaffine-cartan-conn} one has
\begin{equation}\label{eq:Ricci-entries}
  \mathsf{Ric}_{11}=R^1_2,\quad \mathsf{Ric}_{22}=R^2_1,\quad \mathsf{Ric}_{12}=-\mathsf{Ric}_{21}=R^1_1.
  \end{equation}
Note that being symplectic implies that the Ricci tensor is symmetric.

Using the symplectic form to raise and lower indices,  define
\begin{equation}\label{eq:K-nabla}
  K_\nabla=\nabla^{i}\nabla^j\mathsf{Ric}_{ij}-\half \mathsf{Ric}^{ij}\mathsf{Ric}_{ij}+\tfrac 14\mathsf{R}^{ijkl}\mathsf{R}_{ijkl}.
\end{equation}
By inspection, one can check that in dimension two $-\half \mathsf{Ric}^{ij}\mathsf{Ric}_{ij}+\tfrac 14\mathsf{R}^{ijkl}\mathsf{R}_{ijkl}=0.$   Thus, \eqref{eq:K-nabla} simplifies to $K_\nabla=\nabla^{i}\nabla^j\mathsf{Ric}_{ij}.$  In terms of the structure functions \eqref{eq:phi-equiaffine-cartan-conn}, writing the symplectic form as $\omega=\half \ve_{ij}\omega^i\w\omega^j=\omega^1\w\omega^2,$ one can express $K_\nabla$ in terms of coframe derivatives, defined in \ref{sec:conventions}. Since  in the case of  linear connections one has $f_{;i}=\nabla_if,$ it easily follows that
\begin{equation}\label{eq:K-nabla-coframe-derivative}
  K_\nabla=R^1_{2;11}-2R^1_{1;21}-R^2_{1;22}.
\end{equation}

\begin{remark}
  Denoting the discriminant of the quadric  \eqref{eq:R-binary-quadric} as $\mathsf{Disc}(\bR)=(R^1_1)^2+R^1_2R^2_1$,  \eqref{eq:Ricci-entries} gives  \[\mathsf{Ric}^{ij}\mathsf{Ric}_{ij}=-2\mathsf{Disc}(\bR).\]
  Furthermore, writing $\nabla_i\mathsf{Ric}_{jk}=\mathsf{Ric}_{jk;i},$ the cubic \eqref{eq:binary-cubic} and quartic \eqref{eq:binary-quartic} can be expressed as
\[ \begin{aligned}
     \bC&=\mathsf{Ric}_{11;2}(\omega^2)^3 +(\mathsf{Ric}_{11;1}+2\mathsf{Ric}_{12;2})(\omega^2)^2\omega^1 -(2\mathsf{Ric}_{21;1}+\mathsf{Ric}_{22;2})\omega^2(\omega^1)^2- \mathsf{Ric}_{22;1}(\omega^1)^3.\\
     \bQ&=\mathsf{Ric}_{11;22}(\omega^2)^4+2(\mathsf{Ric}_{11;12}+\mathsf{Ric}_{12;22})(\omega^2)^3\omega^1+(4\mathsf{Ric}_{12;12}-\mathsf{Ric}_{22;22}+ \mathsf{Ric}_{11;11})(\omega^2)^2(\omega^1)^2\\
     &
      \ \ -2(\mathsf{Ric}_{22;21}+\mathsf{Ric}_{21;11})\omega^2(\omega^1)^3- \mathsf{Ric}_{22;11}(\omega^1)^4,
  \end{aligned}
\]
Note that using \eqref{eq:coframe-change-2} and \eqref{eq:psi-T1} one can express the invariants above in terms of  the corresponding pre-K\"ahler structure e.g. $\mathsf{Disc}(\bR)=4(\bT_1+(T_3)^2).$
\end{remark}

 \begin{definition}\label{def:special-equiaffine-surface}
Given a symplectic manifold $(N,\sigma),$ a symplectic connection, $\nabla,$ is called critical  if the Hamiltonian vector field defined by $K_\nabla$ is an infinitesimal symmetry of $\nabla.$ Moreover, when $K_\nabla$ is constant, the symplectic connection is called moment flat.
\end{definition}

Similar to the case of extremal K\"ahler metrics, it can be shown \cite{Cahen1,Fox-affine} that critical symplectic connections on a symplectic manifold $(N,\sigma)$ correspond to critical points  of the functional $\int_N K_\nabla^2\tfrac{\sigma^n}{n!}.$

\begin{proposition}\label{prop:crit-spec-sympl-1}
  Special symplectic connections on surfaces  are critical. 
\end{proposition}

\begin{proof}
  By Proposition \ref{prop:Bochner-flat-pre-Kahler},  for 2-dimensional special symplectic connections the cubic $\bC$ in \eqref{eq:binary-cubic} is zero and the structure functions satisfy \eqref{eq:1st-Bianchies}, \eqref{eq:2nd-Bianchies}, and \eqref{eq:3rd-Bianchi}. Directly inspecting these equation, using \eqref{eq:Ricci-entries},  one obtains \eqref{eq:K-nabla-coframe-derivative} simplifies to $K_\nabla=-6R_{1;12}^1$ and
  \begin{equation}
    \label{eq:Knabla-Ricci}
    \exd K_\nabla=-6\exd (R^1_{1;12})=-6(R^2_1R^1_{1;2}-R^1_1R^1_{1;1})\omega^1+6(R^1_{1}R^1_{1;2}+R^1_{2}R^1_{1;1})\omega^2.
      \end{equation}
  As a result, taking any section $s\colon M\to\ccA,$ the corresponding Hamiltonian vector field on $N$ is given by
  \[H_{K_\nabla}=6(R^1_{1}R^1_{1;2}+R^1_{2}R^1_{1;1})\tfrac{\partial}{\partial s^*\omega^1}+6(R^2_1R^1_{1;2}-R^1_1R^1_{1;1})\tfrac{\partial}{\partial s^*\omega^2},\]
  where $(\tfrac{\partial}{\partial s^*\omega^1},\tfrac{\partial}{\partial s^*\omega^2})$ is the frame on $M$ dual to the coframe $(s^*\omega^1,s^*\omega^2).$ 
  Using the connection $\nabla,$ the horizontal lift of $H_{K_\nabla}$ to $\ccA$  is found to be
  \begin{multline}
    \what H_{K_\nabla}=6(R^1_{1}R^1_{1;2}+R^1_{2}R^1_{1;1})\tfrac{\partial}{\partial \omega^1}+6(R^2_1R^1_{1;2}-R^1_1R^1_{1;1})\tfrac{\partial}{\partial \omega^2}\\
    +6(R^1_1R^1_{1;12}-R^1_{1;1}R^1_{1;2})\tfrac{\partial}{\partial\omega^1_1} 
    +6(R^1_{1;12}R^1_2+(R^1_{1;2})^2)\tfrac{\partial}{\partial\omega^1_2}+ 6(R^1_{1;12}R^1_2-(R^1_{1;2})^2)\tfrac{\partial}{\partial\omega_1^2}.
  \end{multline}
  The fact that $\what H_{K_\nabla}$ is an infinitesimal symmetry of $\nabla$   follows by showing that  the  Lie derivatives of $\omega^i$'s and $\omega^i_j$'s along $\what H_{K_\nabla}$ is zero, which is  a straightforward computation using structure equations \eqref{eq:phi-equiaffine-cartan-conn}, and  Bianchi identities \eqref{eq:1st-Bianchies}, \eqref{eq:2nd-Bianchies}, and \eqref{eq:3rd-Bianchi}.

\end{proof}

\begin{remark}
   It can be shown that moment flat special  symplectic connections are either locally flat or  homogeneous with 3-dimensional symmetry algebra isomorphic to  $\mathfrak{so}(p+1,q)$ or $\mathfrak{so}(p,q)\ltimes\RR^2$ where $(p,q)=(2,0)$ or $(1,1).$ Furthermore,  the corresponding pre-K\"haler structure in the case of non-flat homogeneous symplectic connections have cohomogeneity one.
\end{remark}

\subsection{Pre-Sasakian structures on homogeneous CR manifolds}
\label{sec:pre-sasak-struct}
Let us briefly explore a few properties in general dimension before returning to the $5$-dimensional pre-Sasakian case. The general-dimension results presented in this section provide Lie-theoretic descriptions of some pre-Sasakian structure equivalence classes, and they are based on similar ideas used for special symplectic connections in \cite{CS-special} and certain classes of Sasakian geometries in \cite{sykes2021classification}. 

Throughout the sequel, the term \emph{locally homogeneous CR manifold} refers to a CR manifold in which every point is in an open neighborhood that is CR equivalent to an open set on a common homogeneous CR manifold. For a point $p$ in a CR manifold $(S,D,J)$, consider the subset $Q_0$ of germs of local CR symmetries fixing $p$.  There is a natural group structure on $Q_0$, and it has a natural action on the algebra $\mathfrak{g}$ of infinitesimal CR symmetries in a neighborhood of $p$, given by applying the differential of  local diffeomorphisms representing germs in $Q_0$.  We refer to such $Q_0$ as the \emph{local isotropy group of} $(S,D,J)$ at $p$, and call this action its \emph{adjoint action} on $\mathfrak{g}$, as it coincides with the usual adjoint representation whenever $Q_0$ is the isotropy symmetry subgroup of a homogeneous manifold. 

If the CR structure underlying a pre-Sasakian structure $(S,X)$ is (locally) homogeneous then the germ of $(S,X)$ at a point $p\in S$ determines the pre-Sasakian structure in a neighborhood of $p$. This is a consequence of the following lemma.
\begin{lemma}\label{lemma: germ equivalence classes}
Let $(S,D,J)$ be a locally homogeneous CR manifold, $\mathfrak{g}$ its algebra of infinitesimal CR symmetries, and $p$ a point in $S$. The equivalence classes of germs at $p$ of pre-Sasakian structures modeled on $(S,D,J)$ are in one-to-one correspondence with the orbits of transverse symmetries (at $p$) in $\mathfrak{g}$ under the adjoint action on  $\mathfrak{g}$ of the local isotropy group of $(S,D,J)$ at $p$.
\end{lemma}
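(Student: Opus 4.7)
The strategy is to realize the correspondence by fixing, for each pre-Sasakian germ, a representative on (the germ of) $(S,D,J)$ itself and extracting the transverse symmetry as an element of $\mathfrak{g}$, noting that the choice of such a representative is unique up to the $Q_0$-action. The main work is bookkeeping for germ identifications; once this is set up, the rest is a direct unpacking of definitions.

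Specifically, given a germ at $p$ of a pre-Sasakian structure $(\tilde S, \tilde D, \tilde J, \tilde X)$ modeled on $(S,D,J)$, the modeling hypothesis provides a germ of CR equivalence $\iota \colon (\tilde S, p) \to (S, p)$. Setting $X := \iota_* \tilde X$ yields an element of $\mathfrak{g}$ whose value at $p$ is transverse to $D_p$. Any other choice $\iota'$ of such an identification satisfies $\iota' = g \circ \iota$ for a unique $g \in Q_0$, and accordingly changes $X$ to $g_* X$. Hence the pre-Sasakian germ determines a well-defined $Q_0$-orbit of transverse elements of $\mathfrak{g}$, where the action of $Q_0$ is the adjoint action by pushforward of vector field germs, which agrees with the standard adjoint representation in the locally homogeneous setting (e.g. via identification of $\mathfrak{g}$ with right-invariant vector fields on a local symmetry group).

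Next I would verify that this map descends to a bijection on equivalence classes. If two germs $(\tilde S^i, \tilde X^i)$, $i=1,2$, are equivalent via a germ of CR diffeomorphism $\varphi \colon (\tilde S^1, p) \to (\tilde S^2, p)$ with $\varphi_* \tilde X^1 = \tilde X^2$, then for any fixed identifications $\iota_i \colon (\tilde S^i, p) \to (S, p)$, the composition $\iota_2 \circ \varphi \circ \iota_1^{-1}$ belongs to $Q_0$ and sends $\iota_{1*} \tilde X^1$ to $\iota_{2*} \tilde X^2$, placing the two extracted elements in the same $Q_0$-orbit. Conversely, if $X_2 = g_* X_1$ for some $g \in Q_0$, then $\iota_2^{-1} \circ g \circ \iota_1$ realizes a pre-Sasakian equivalence of the representatives. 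Surjectivity onto $Q_0$-orbits of transverse elements is immediate, as every transverse $X \in \mathfrak{g}$ defines the pre-Sasakian germ $(S, D, J, X)$ at $p$, and the transverse condition at $p$ is $Q_0$-invariant because $Q_0$ fixes $p$ and preserves $D_p$.

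The only delicate point to watch is the simultaneous management of the identifications $\iota_i$ and the equivalence $\varphi$, in order to confirm that the $Q_0$-orbit produced is genuinely independent of all auxiliary choices. No deeper obstruction arises, and the proof is ultimately a careful translation of the definitions through the identification with the model $(S,D,J)$.
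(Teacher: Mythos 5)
Your proposal is correct and follows essentially the same route as the paper: both reduce the statement to the observation that any CR identification of a pre-Sasakian germ with the model is ambiguous exactly up to an element of $Q_0$ acting by pushforward of vector fields (which is the paper's definition of the adjoint action), and that transversality at $p$ is preserved since $Q_0$ fixes $p$ and preserves $D_p$. Your write-up is in fact slightly more explicit than the paper's on the well-definedness and surjectivity bookkeeping, whereas the paper routes the key step $X'=\operatorname{Ad}_a X$ through the coset description $G/Q_0$; the content is the same.
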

\begin{proof}
Let the pre-Sasakian structures $(S,X)$ and $(S,X^\prime)$ represent two germs of pre-Sasakian structures at $p$, where $X$ and $X^\prime$ are infinitesimal symmetries of $(S,D,J)$ transverse to $D$ at $p$. If these germs are equivalent then there is a local diffeomorphism $\varphi$ of a neighborhood of $p$ that is a CR symmetry and whose differential carries $X$ to $X^\prime$.

In the case where $(S,D,J)$ is locally homogeneous, we may describe it locally at $p$ as a left coset space $G/Q_0$ with $p=Q_0$, where $Q_0$ is the local isotropy group of $(S,D,J)$ at $p$, that is, given by the equivalence relation $g\sim g^\prime$ if $g=g^\prime h$ for some $h\in Q_0$. Its CR distribution is identified with a left invariant distribution on $G$ projected to $S\subset G/Q_0$, and infinitesimal CR symmetries are the right invariant vector fields on $G$ projected to $G/Q_0$. There is a basic fact from Lie theory that the left action of $G$ on $G/Q_0$  has an induced action of this symmetry algebra via the adjoint action of $G$ on $\mathfrak{g}$. The group $G$ can be taken in this setup so that all local symmetries in a sufficiently small neighborhood of $p\in S$ are restrictions of the left action by $G$ to $S$. Hence the aforementioned $X$ and $X^\prime$ are represented by right-invariant vector fields on $G$ in this local description, whereas $\varphi$ is given by the left action of a group element on $S\subset G/Q_0$ that fixes $p=Q_0$. That is, we may identify $\varphi=a\in Q_0$ and 
\begin{equation}\label{eqn: adjoint orbits}
X^\prime = \varphi_*X = \operatorname{Ad}_{a}X. 
\end{equation}
Note that since $\varphi$ is a CR automorphism, the isomorphism $\varphi_*$ cannot send vectors transverse to $D$ into $D$, and hence orbits generated by \eqref{eqn: adjoint orbits} for various $h\in Q_0$ consist only of infinitesimal CR symmetries that are transverse at $p$.
\end{proof}

\begin{corollary}\label{cor: germ determinacy}
 For every point $p$ in a pre-Sasakian manifold $(S,X)$ whose underlying CR structure is locally homogeneous, there exists an open neighborhood of $p$ in which the germs of $(S,X)$ at every other point in the neighborhood are determined by the germ at $p$.
\end{corollary}
\begin{proof}
By Lemma \ref{lemma: germ equivalence classes}, the germ at $p$ identifies a neighborhood of $p$ in $(S,X)$ with a neighborhood of the identity element in $(G/Q,V)$, where $G$ can be taken as the connected simply connected Lie group of the underlying CR symmetry algebra $\mathfrak{g}$, $Q$ its isotropy subgroup of the local action at $p$, and $V$ a vector field represented by some right invariant vector field on $G$. The lemma implies that ambiguity in the choice of $V$ is exhausted via transformations by the left action of $Q$ applied to $(G/Q,V)$.
\end{proof}
\begin{remark}[geometric equivalence]\label{rem: coarse equivalence}
In settings where such \emph{continuation phenomena} occur -- that is, where a structure on a manifold is determined in an open neighborhood by its germ at a point -- it is meaningful to consider a coarser equivalence relation on germs, whereby two germs $\alpha$ and $\alpha^\prime$ are equivalent if there is a sequence of germs $\alpha\cong \alpha_0,\alpha_1,\ldots,\alpha_n\cong \alpha^\prime$ such that each consecutive pair $(\alpha_{j-1},\alpha_j)$ both occur at different points on some path-connected manifold in the category, as this weaker equivalence roughly characterizes germs that can be connected by a natural structure continuation (such as analytic continuation for holomorphic structures). Such coarse equivalence classes have been studied, for example, in the category of Bochner-flat K\"{a}hler manifolds \cite{Bryant} and the more general category of special symplectic connections \cite{CS-special}. Corollary \ref{cor: germ determinacy} shows that this is a natural equivalence relation to consider for any class of pre-Sasakian structures whose underlying CR structures are locally equivalent to a common homogeneous model $(S,D,J)$. Each coarse equivalence class is comprised of exactly the pre-K\"{a}hler structures that can appear on a common connected manifold, which  we refer to as \emph{geometric equivalence} (distinct form standard germ equivalence). We will call the set of such equivalence classes for a given $(S,D,J)$ the \emph{moduli space of geometrically distinct pre-K\"{a}hler structures (arising from $(S,D,J)$)}. 
\end{remark}
To state the following corollary, for a symmetry algebra $\mathfrak{g}$ of a CR structure $(S,D,J)$, let $G$ be the connected simply connected Lie group of $\mathfrak{g}$, let $Q_0$ be the local isotropy group of $(S,D,J)$ at $p$. Let $\hat G$ denote the subgroup in $\operatorname{Aut}(\mathfrak{g})$ generated by the images of $G$ and $Q_0,$ denoted as
  \begin{equation}
    \label{eq:G-hat}
    \hat G=\langle \mathrm{Ad}_G,\mathrm{Ad}_{Q_0}\rangle\subseteq \mathrm{Aut}(\fg).
  \end{equation}
\begin{corollary}\label{cor: coarse equivalence via adjoint action}
Let $(S,D,J)$ be a finitely-nondegenerate locally homogeneous CR manifold. The moduli space of pre-K\"{a}hler structures arising from $(S,D,J)$ (defined in Remark \eqref{rem: coarse equivalence}) is in one-to-one correspondence with the space of orbits of $\hat G\subset \operatorname{Aut}(\mathfrak{g})$ acting on the CR symmetry algebra $\mathfrak{g}$ of $(S,D,J)$, where  $\hat{G}$ is as in \eqref{eq:G-hat}.
\end{corollary}
\begin{proof}
Let $G$ and $Q_0$ denote the same groups used in the definition of $\hat{G}$ as in \eqref{eq:G-hat}.
We first show that coarsely equivalent germs  $\alpha$ and $\alpha^\prime$ are represented by a unique $\hat{G}$-orbit. It will suffice to show this for each consecutive pair $(\alpha_{j-1},\alpha_j)$ from a sequence of germs $\alpha\cong \alpha_0,\alpha_1,\ldots,\alpha_n\cong \alpha^\prime$ as in Corollary \ref{cor: germ determinacy}, so let's assume $\alpha$ and $\alpha^\prime$ are respectively equivalent  to germs $\alpha_0$ and $\alpha_1$ at two points $p$ and $p^\prime$ in a path-connected pre-Sasakian manifold $(S,X)$ modeled on a locally homogeneous CR structure $(S,D,J)$. Any path connecting $p$ and $p^\prime$ can be covered by finitely many open neighborhoods on which the pre-Sasakian structure is equivalent to a neighborhood of the identity element in $(G/Q,V)$, where $G/Q$ is a homogeneous CR manifold (with $G$ connected and simply connected) and $V$ is a transverse infinitesimal CR symmetry on the neighborhood, because every point in $S$ has such a neighborhood. By chaining together such neighborhoods, we can reduce to the simplified case where $(S,D,J)$ is (globally) homogeneous, 
\[
S=G/Q,
\] 
and the infinitesimal CR symmetry $X$ is assumed only to be transverse at $p$ and $p^\prime$. Taking $a\in G$ satisfying $p= a p^\prime$, the differential of left translation by $a$ transforms $X$ to $\operatorname{Ad}_{a}X$, so the germ $\alpha_1$ of $(S,X)$ at $p^\prime$ is equivalent to the germ of $(S,\operatorname{Ad}_{a}X)$ at $p$ by construction. Therefore, coarsely equivalent germs $\alpha_0$ and $\alpha_1$ in this simplified case are represented by elements in $\mathfrak{g}$ belonging to the same $G$-orbit. By Lemma \ref{lemma: germ equivalence classes}, $\alpha_0$ and $\alpha_1$ are respectively related to $\alpha$ and $\alpha^\prime$ by the $\operatorname{Ad}_{Q_0}$ action, so $\alpha$ and $\alpha^\prime$ are in the same $\hat{G}$-orbit.

Conversely,  for $X$ in $\mathfrak{g}$, let $p\in G/Q$ be any point in the CR manifold. For a group element $a\in G$, if $X$ is transverse at $a^{-1}p$ then the pre-Sasakian germ at $a^{-1}p$ defined by $X$ transforms to $\operatorname{Ad}_{a}X$ at $p$ under the left action of $a$. So the germs of $(G/Q,X)$ and $(G/Q,\operatorname{Ad}_{a}X)$ at $p$, (i.e., germs in a common $G$-orbit) are coarsely equivalent. It is easily concluded from here and Lemma \ref{lemma: germ equivalence classes}  that $X$ and $\operatorname{Ad}_{h_2}\circ\operatorname{Ad}_{a}\circ\operatorname{Ad}_{h_1}X$ are also coarsely equivalent for any $h_j\in Q_0$, and hence the larger $\hat{G}$-orbit consists of coarsely equivalent germs. Finite non-degeneracy of the CR structure implies that $X$ will be transverse almost everywhere on $G/Q$, so for arbitrary $X\in \mathfrak{g}$ and $p\in G/G_0$ there will exist elements in the conjugacy class of $X$ that define a pre-Sasakian structure at $p$, thereby identifying the $G$-orbit with a unique coarse equivalence class of germs.  Finite non-degeneracy is essential for the converse direction because without it there can exist $G$ orbits in $\mathfrak{g}$ consisting of nowhere transverse infinitesimal CR symmetries.
\end{proof}

The classification of homogeneous $5$-dimensional hypersurface-type $2$-nondegenerate CR structures is derived in \cite{FK-CR}, a major development in Levi degenerate CR geometry. Consequently, it provides a large class of pre-Sasakian structures, as each of the classified CR hypersurfaces possess many transverse symmetries. Relating \cite{FK-CR} to the classification of affinely homogeneous surfaces in $\mathbb{R}^3$ from \cite{DKR-affine}, one gets that all homogeneous $2$-nondegenerate hypersurfaces in $\mathbb{C}^3$  are locally equivalent to hypersurfaces of the form
\begin{equation}\label{eqn: homogeneous CR realizations}
\{(w,z^1,z^2)\,|\, \Re(w)=\rho_a(z)-2\} 
\quad\quad\forall\, a\neq 0,1
\end{equation}
at the origin, where $\rho_a$ is as in \eqref{rho-Doubrov-example} from Example \ref{ex: light cone potential}.  For the other parameter values $a=0,1$, the structures are holomorphically degenerate.   From \eqref{eqn: homogeneous CR realizations}, we can calculate these hypersurfaces' CR symmetries, which in turn can be used to describe all pre-Sasakian structures on the hypersurfaces along with their respective symmetries.

For the $a$-parameterized family of hypersurfaces in \eqref{eqn: homogeneous CR realizations}, their infinitesimal symmetry algebras all contain
\begin{equation}\label{eqn: 5d. hom CR symmetries a}
\begin{gathered}
X_0 := \frac{\partial}{\partial v},
\quad
X_1 := \frac{\partial}{\partial y^1},
\quad
X_2 := \frac{\partial}{\partial y^2}, \\
X_3 := \left(\frac{x^{1}}{2} + \frac{1}{2}\right) \frac{\partial}{\partial x^{1}} + \frac{y^{1}}{2} \frac{\partial}{\partial y^{1}} + \frac{a \left(x^{2} + 1\right)}{2 \left(a - 1\right)} \frac{\partial}{\partial x^{2}} + \frac{a y^{2}}{2 \left(a - 1\right)} \frac{\partial}{\partial y^{2}},\quad \text{ and}\\
X_4 :=\left(\frac{x^{1}}{2} + \frac{1}{2}\right) \frac{\partial}{\partial x^{1}} + \frac{y^{1}}{2} \frac{\partial}{\partial y^{1}} + \frac{a \left(- x^{2} - 1\right)}{2 \left(a - 1\right)} \frac{\partial}{\partial x^{2}} -  \frac{a y^{2}}{2 a - 2} \frac{\partial}{\partial y^{2}} + a \left(u + 2\right) \frac{\partial}{\partial u} + a v \frac{\partial}{\partial v},
\end{gathered}
\end{equation}
in coordinates $w=u+\operatorname{i} v$ and $z^j = x^j+\operatorname{i} y^j$. Except for the values $a=-1,\tfrac{1}{2},2$ where this hypersurface has the flat $2$-nondegenerate structure, these symmetries span the symmetry algebra, as it is shown to be $5$-dimensional in  \cite{FK-CR}. We can apply Lemma \ref{lemma: germ equivalence classes}  to describe the spaces of germs of $2$-nondegenerate pre-Sasakian structures modeled on homogeneous CR hypersurfaces, and obtain the following dimension count.

\begin{proposition}\label{prop: germ equivalence class dimension}
Local equivalence classes define a codimension $5$ foliation on an open dense subset in the space of germs of pre-Sasakian structures modeled on any fixed $5$-dimensional $2$-nondegenerate locally homogeneous CR manifold.
\end{proposition}
\begin{proof}
If the underlying CR structure is not flat,
then the result follows from Lemma \ref{lemma: germ equivalence classes} because, as discussed above, such CR structures are simply transitive.  

If the underlying CR structure is flat, then, as described in Section \ref{sec:symm-reduct-flat}, the CR symmetry algebra is $\mathfrak{g}:=\mathfrak{sp}(4)$ with a matrix representation
\[
\left\{\left.
\left(
\begin{array}{cccc}
-s_1 & s_2+\operatorname{i}s_3 & s_2 - \operatorname{i}s_3 & s_4 \\
2(r_1 - \operatorname{i}r_2) & -\operatorname{i}s_5 & r_3-\operatorname{i}r_4 & 2\operatorname{i}(s_2 - \operatorname{i}s_3) \\
2(r_1 + \operatorname{i}r_2) & r_3+\operatorname{i}r_4 & \operatorname{i}s_5 & -2\operatorname{i}(s_2 + \operatorname{i}s_3) \\
r_5 & -\operatorname{i}(r_1 + \operatorname{i}r_2) & \operatorname{i}(r_1 - \operatorname{i}r_2) & s_1
\end{array}
\right)
\right|
s_j,r_j\in\mathbb{R}
\right\},
\]
where $(s_1,\ldots, s_5)$ parameterize the Lie subalgebra $\mathfrak{h}$ of an isotropy subgroup $H$. For each $g\in \mathfrak{g}$, the tangent space at $g$ to the orbit of the adjoint action of $H$ on $\mathfrak{g}$ through $g$ is $\{\operatorname{ad}_h(g)\,|\, h\in \mathfrak{h}\}$. It is a straightforward computation to verify that $\{\operatorname{ad}_h(g)\,|\, h\in \mathfrak{h}\}$ is $5$-dimensional for generic $g$, and hence orbits of $H$ foliate an open dense subset of $\mathfrak{g}$ with $5$-dimensional leaves.
\end{proof}
We can similarly describe the spaces of geometrically inequivalent pre-K\"{a}hler structures using Corollary \ref{cor: coarse equivalence via adjoint action}, which can be combined with Proposition \ref{prop: germ equivalence class dimension} to find lower bounds on symmetry group dimensions.

\begin{theorem}\label{thm: coarse moduli dim}
The space of geometrically inequivalent pre-K\"{a}hler structures arising from any fixed $5$-dimensional $2$-nondegenerate locally homogeneous CR manifold contains a $2$-dimensional open dense subset. Each such pre-K\"{a}hler structure has nontrivial infinitesimal symmetries.
\end{theorem}
\begin{proof}
Given the classification of such homogeneous CR structures, we may consider only the pre-Sasakian structures modeled on \eqref{eqn: homogeneous CR realizations} for various values of the parameter $a$. Assuming $2$-nondegeneracy precludes $a=0,1$.

If the CR structure is flat, then, due to Proposition \ref{prop:Bochner-flat-pre-Kahler}, this theorem reduces to \cite[Corollary C]{CS-special}. Because the argument is brief, however, we note how to get this theorem's first part directly: it follows from the fact that for $G=\mathrm{Sp}(4,\RR)$, as for any semi-simple Lie group $G$, a generic element in $\fg$ is $G$-conjugate to an element in a Cartan subalgebra. And hence, by Corollary \ref{cor: coarse equivalence via adjoint action}, the $2$-dimensional Cartan subalgebra in $\fg$ represents an open dense subset in the space of geometrically distinct structures. 

If the CR structure is not flat, on the other hand, then we may furthermore assume $a\neq -1,\tfrac{1}{2},2$ (see Example \ref{exa:special-homog-equiaaffine}).  For each such $a$, the CR symmetry algebra $\mathfrak{g}$ is the $5$-dimensional algebra represented by \eqref{eqn: 5d. hom CR symmetries a}.  Letting $(e_1,\ldots, e_5)$ be labels for those five vector fields in the order they are listed and letting $t=(t_1,\ldots, t_5)\in\mathbb{R}^5$ represent the element $\sum t_ie_i\in\mathfrak{g}$, we have
\[
\operatorname{ad}_{e_1}(t) = \begin{bmatrix} a t_5 \\ 0\\ 0\\ 0\\ 0 \end{bmatrix}, \quad
\operatorname{ad}_{e_2}(t) = \frac{1}{2}\begin{bmatrix} 0\\ t_4+t_5\\ 0\\ 0\\ 0 \end{bmatrix}, \quad
\operatorname{ad}_{e_3}(t) = \frac{a}{2(a-1)}\begin{bmatrix} 0 \\ 0\\ t_4-t_5\\ 0\\ 0 \end{bmatrix},
\]
\[
\operatorname{ad}_{e_4}(t) = \frac{-1}{2(a-1)}\begin{bmatrix} 0 \\ (a-1)t_2\\ at_3\\ 0\\ 0 \end{bmatrix}, \quad
\operatorname{ad}_{e_5}(t) = \frac{-1}{2(a-1)}\begin{bmatrix} 2(a-1)t_1 \\ (a-1)t_2\\ -at_3\\ 0\\ 0 \end{bmatrix}
\]

These five vectors span a $3$-dimensional space for generic $t$ representing  $\{\operatorname{ad}_x(t)\,|\, x\in \mathfrak{g}\}$, which is the tangent space of the equivalence class orbit passing through $t\in \mathfrak{g}$, by Corollary \ref{cor: coarse equivalence via adjoint action}. And in general, their spans are at most $3$-dimensional for all $t$. In fact, for al $t\in \mathfrak{g}$, their spans are contained in the tangents of affine $3$-spaces parallel to the span of $\{e_1, e_2, e_3\}$.

Thus, away from this foliation's singularity locus $\big\{t\,\big|\, \dim \{\operatorname{ad}_x(t)\,|\, x\in \mathfrak{g}\}<3\big\}$, the foliation is simply partitioning  $\mathfrak{g}$ into parallel affine $3$-spaces. Since $ \mathfrak{g}$ is $5$-dimensional, the leaf space of this foliation minus the singularity locus is a $2$-dimensional manifold.  By Corollary \ref{cor: coarse equivalence via adjoint action}, this leaf space is the corresponding space of geometrically distinct pre-K\"{a}hler structures.

Regarding symmetries in the non-flat CR case, consider the natural map from a pre-K\"{a}hler manifold $M$ to the space of germs of pre-K\"{a}hler structures modeled on the same locally homogenoeus CR manifold, which is identified with that CR manifold's Lie algebra $\fg$ of infinitesimal symmetries. Assuming the pre-K\"{a}hler manifold is connected, then, by definition, the image of this map is contained in one \emph{geometric equivalence class} $\mathcal{O}\subset \fg$ (i.e. an orbit of the adjoint action of $\hat{G}$ from Corollary \ref{cor: coarse equivalence via adjoint action}), which we have seen is a manifold of dimension at most $3$. Quotienting this  \emph{geometric equivalence class} $\mathcal{O}$ by the standard local equivalence relation $\sim$ on germs does not change the dimension, by Proposition \ref{prop: germ equivalence class dimension}. Thus we have a fibration $M\to \mathcal{O}/\!\!\sim$ whose fibers are at least $1$-dimensional because $\dim(M)=4$ and $\dim( \mathcal{O}/\!\!\sim)\leq 3$.  Since germs at every point along a fiber belong to the same local equivalence class, local symmetries act locally transitively on the fibers.

For completeness, we note how these symmetries can also be found through direct calculation: For a non-flat $2$-nondegenerate CR manifold $S$ of the form \eqref{eqn: homogeneous CR realizations} given by the parameter $a$, let $X_0,\ldots, X_{4}$ be the vector fields of \eqref{eqn: 5d. hom CR symmetries a} and let the infinitesimal $X = \sum \alpha_jX_j$ for some coefficient parameters $\alpha_j$ represent a pre-Sasakian structure (whereas for the flat CR case, one should take $X$ as a general element from its $10$-dimensional symmetry algebra instead). Infinitesimal symmetries of $(S,X)$ are the CR symmetries commuting with $X$, which one can compute directly. For all $a$, this centralizer is at least $2$-dimensional, so the pre-K\"{a}hler symtry algebra, which is this pre-Sasakian symmetry algebra quotiented by the span of $X$, is at least $1$-dimensional. 
\end{proof}

\begin{remark}
In Remark \ref{rmk:Q-quartic-examples} it was pointed out that pre-K\"ahler structures with potential $\rho_a$ in \eqref{rho-Doubrov-example}, which are clearly among symmetry reductions of homogeneous 2-nondegenerate CR 5-manifolds, satisfy the invariant relation $\bQ-c(\bR)^2=0,$ for some constant $c\in\RR.$ Furthermore, one can check that they correspond to moment flat symplectic connections with 3-dimensional symmetry algebra.  In the spirit of Propositions \ref{prop:Bochner-flat-pre-Kahler} and \ref{prop:crit-spec-sympl-1}, it would be interesting to determine whether every member of the 2-parameter families of pre-K\"ahler structure obtained via symmetry reductions of non-flat homogeneous CR structures correspond to a critical symplectic connection and whether the relation $\bQ-c(\bR)^2=0$  characterizes them.
\end{remark}

\subsection{Homogeneous pre-K\"{a}hler complex surfaces}
\label{sec:homog-pre-kahl}
Gathering this section's preceding results, we can now prove that every locally homogeneous $4$-dimensional $2$-nondegenerate pre-K\"{a}hler structure is flat.

\begin{proposition}\label{prop: hom implies symm red}
Every homogeneous $4$-dimensional $2$-nondegenerate pre-K\"{a}hler structure is obtained via symmetry reduction of a flat 2-nondegenerate CR 5-manifold.
\end{proposition}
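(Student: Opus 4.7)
The plan is to apply the pre-K\"ahler--Sasakian correspondence (Theorem \ref{thm: correspondence theorem} together with Corollary \ref{equivalent embeddings corollary}) to lift local homogeneity of $(g,\omega)$ on $M$ to local CR-homogeneity of its associated pre-Sasakian 5-manifold $(S,X)$, invoke the classification of locally homogeneous 2-nondegenerate CR 5-manifolds from \cite{FK-CR}, and then rule out every non-flat possibility by a direct Lie-algebraic calculation.

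Let $(S,X)$ denote the pre-Sasakian manifold associated to $(g,\omega)$ under Theorem \ref{thm: correspondence theorem}, so that $M$ is locally identified with the leaf space of the flow of $X$. By Corollary \ref{equivalent embeddings corollary}, every local pre-K\"ahler symmetry of $(g,\omega)$ lifts to a local CR symmetry of $S$ commuting with $X$, and conversely such lifts descend to pre-K\"ahler symmetries of $(g,\omega)$ with kernel exactly $\langle X\rangle$. Assuming $(g,\omega)$ is locally homogeneous on the 4-manifold $M$, these lifts together with the flow of $X$ generate a locally transitive action on the 5-manifold $S$, so $S$ is locally CR-homogeneous.

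By \cite{FK-CR}, $S$ is then locally equivalent either to the flat model $\mathrm{Sp}(4,\mathbb{R})/H$ of Proposition \ref{prop:flat-model-CR-2nondeg}, or to one of the non-flat simply transitive hypersurfaces \eqref{eqn: homogeneous CR realizations} for some $a\neq -1,0,\tfrac12,1,2$, whose 5-dimensional CR symmetry algebra $\mathfrak{g}$ is spanned by the vector fields $X_0,\ldots,X_4$ of \eqref{eqn: 5d. hom CR symmetries a}--\eqref{eqn: 5d. hom CR symmetries b}. In the non-flat case, the correspondence above identifies the infinitesimal pre-K\"ahler symmetry algebra on $M$ with $\mathrm{stab}_{\mathfrak{g}}(X)/\langle X\rangle$, so local transitivity on the four-dimensional $M$ forces $\dim\mathrm{stab}_{\mathfrak{g}}(X)\geq 5=\dim\mathfrak{g}$, equivalently $X\in Z(\mathfrak{g})$.

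The crux of the argument is therefore to verify that $Z(\mathfrak{g})=0$ for every admissible value of $a$. The brackets of $X_0,\ldots,X_4$ present $\mathfrak{g}$ as a semidirect product $\langle X_0,X_1,X_2\rangle\rtimes \langle X_3,X_4\rangle$, in which the commuting pair $(X_3,X_4)$ acts diagonally on the abelian translation ideal with three nonzero weights in $(\mathbb{R}^2)^{*}$ whose span is all of $(\mathbb{R}^2)^{*}$ for every admissible $a$; this forces $Z(\mathfrak{g})=0$, contradicting $X\neq 0$. Hence $S$ must be locally equivalent to the flat model, and $(g,\omega)$ arises as a symmetry reduction of the flat 2-nondegenerate CR 5-manifold, as claimed. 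The main subtle point is the clean identification of $\mathrm{stab}_{\mathfrak{g}}(X)/\langle X\rangle$ with the pre-K\"ahler symmetry algebra via Corollary \ref{equivalent embeddings corollary}; the weight computation itself is routine modulo the explicit formulas for $X_0,\ldots,X_4$.
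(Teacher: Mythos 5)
Your proposal is correct and follows essentially the same route as the paper: pass to the associated pre-Sasakian $5$-manifold via Theorem \ref{thm: correspondence theorem}, invoke the classification of homogeneous $2$-nondegenerate CR $5$-manifolds from \cite{FK-CR}, and observe that the non-flat models' $5$-dimensional symmetry algebras spanned by \eqref{eqn: 5d. hom CR symmetries a}--\eqref{eqn: 5d. hom CR symmetries b} have trivial center, so no transverse symmetry can commute with the full algebra. Your version merely makes explicit two steps the paper leaves terse (that transitivity on $M$ forces $X\in Z(\mathfrak{g})$, and the weight computation showing $Z(\mathfrak{g})=0$), both of which check out.
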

\begin{proof}
By Theorem \ref{thm: correspondence theorem}, it suffices to describe all $5$-dimensional $2$-nondegenerate homogeneous pre-Sasakian structures, which as discussed above are all modeled locally on the CR manifolds in \eqref{eqn: homogeneous CR realizations}. For all but the flat CR structure, these CR hypersurfaces have $5$-dimensional symmetry algebras spanned by the vector fields in  \eqref{eqn: 5d. hom CR symmetries a}, which follows from direct calculation and the dimension for such structures symmetry algebras given in \cite{FK-CR}.

The algebras spanned by \eqref{eqn: 5d. hom CR symmetries a}, however, have trivial centers, so there is no choice of transverse symmetry that commutes with this full $5$-dimensional symmetry algebra. 
\end{proof}

\begin{proposition}\label{prop:symm-red-homog-CR-flat-pre-kahl}
  Among pre-K\"ahler structures obtained via symmetry reduction of a flat 2-nondege\-nerate CR 5-manifold, the flat pre-K\"ahler structure is the only locally homogeneous one. 
\end{proposition}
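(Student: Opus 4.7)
The plan is to use the closed Pfaffian system \eqref{eq:Bianchies-closed-system} satisfied by symmetry reductions of flat 2-nondegenerate CR 5-manifolds, together with the basic invariant description from Theorem \ref{thm:cart-geom-descr}, to deduce that local homogeneity forces $\bT_1=0$. Since Proposition \ref{prop:Bochner-flat-pre-Kahler} already guarantees $\bT_2=0$ for such structures, flatness then follows immediately from Theorem \ref{thm:cart-geom-descr}.

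First, I would observe that local homogeneity forces the basic scalar invariant $\bT_1\in C^{\infty}(M,\RR)$ to be locally constant, so $\exd\bT_1=0$ as a differential identity on the structure bundle $\cG$. Using the first identity in \eqref{eq:Bianchies-closed-system} together with its complex conjugate (recalling $T_3\in\ri\RR$, so $\overline{T_3}=-T_3$), I would compute
\[
\exd\bT_1 = \overline{T_1}\exd T_1 + T_1\exd\overline{T_1} = T_{1;1}\overline{T_1}\,\theta^1 + T_1\overline{T_{1;1}}\,\overline{\theta^1} - 2\ri T_3\bigl(\overline{T_1}\,\theta^2 + T_1\,\overline{\theta^2}\bigr),
\]
the $\psi$-contributions canceling exactly as they must for a function descending to $M$. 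Setting this expression to zero and using the linear independence of the semi-basic 1-forms $\theta^1,\overline{\theta^1},\theta^2,\overline{\theta^2}$ yields the two algebraic relations
\[
T_{1;1}\overline{T_1} = 0 \qquad\mbox{and}\qquad T_3\overline{T_1} = 0.
\]

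Next, I would rule out the possibility that $T_1$ is nonzero. Since $\bT_1=|T_1|^2$ is a nonnegative constant on $\cG$, either $T_1$ vanishes nowhere or vanishes identically. In the first case, the relations above give $T_{1;1}\equiv 0$ and $T_3\equiv 0$; substituting these into the Bianchi identity for $\exd T_3$ (the second equation of \eqref{eq:Bianchies-closed-system}) leaves
\[
-\ri T_1\,\overline{\theta^2} - \ri\overline{T_1}\,\theta^2 = 0,
\]
forcing $T_1\equiv 0$, a contradiction. Hence $T_1\equiv 0$, so $\bT_1=0$, and with $\bT_2=0$ from Proposition \ref{prop:Bochner-flat-pre-Kahler}, Theorem \ref{thm:cart-geom-descr} gives the local equivalence with $G/\mathrm{U}(1)$, $G=\RR^2\rtimes\mathrm{SL}(2,\RR)$.

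The main obstacle I anticipate is merely bookkeeping: the functions $T_1, T_3, T_{1;1}, T_{1;11}$ live on $\cG$ rather than on $M$, so care is required to work with the $\mathrm{U}(1)$-invariant combination $\bT_1=T_1\overline{T_1}$, which is the genuine $M$-function that homogeneity forces to be constant. Once this perspective is taken, the argument reduces to algebraic manipulation of \eqref{eq:Bianchies-closed-system} with no further geometric input needed beyond Proposition \ref{prop:Bochner-flat-pre-Kahler} and the flatness criterion in Theorem \ref{thm:cart-geom-descr}.
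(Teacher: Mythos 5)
Your proof is correct, and it takes a genuinely different route from the paper's. You use only the weakest consequence of local homogeneity, namely that the scalar invariant $\bT_1$ must be constant, then differentiate $T_1\ov{T_1}$ on $\cG$ using the first equation of \eqref{eq:Bianchies-closed-system} and its conjugate (the $\psi$-terms cancel exactly as you say, since $T_1\to e^{-2\ri r}T_1$ makes $T_1\ov{T_1}$ basic), obtaining $T_{1;1}\ov{T_1}=0$ and $T_3\ov{T_1}=0$; feeding $T_3=T_{1;1}=0$ into the $\exd T_3$ identity then kills $T_1$ outright. The paper instead works on a section $s\colon M\to\cG$: it uses the $\mathrm{U}(1)$-gauge freedom to normalize $T_1$ to a nonzero real function $t_1$, solves the first Bianchi identity for $s^*\psi$, and then imposes the full homogeneity requirement that the reduced structure equations have constant coefficients, deriving a contradiction from the expressions for $\exd(T_3/t_1)$ and $\exd T_3$. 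Your argument is shorter and avoids the normalization and section bookkeeping entirely; what the paper's version buys is an explicit reduced coframe on $M$ and the observation about which coefficient ratios would have to be constant, which is more in the spirit of checking the Maurer--Cartan condition directly. One small point worth making explicit in your write-up: the relations $T_{1;1}\ov{T_1}=0$ and $T_3\ov{T_1}=0$ follow from equating coefficients because $(\theta^1,\ov{\theta^1},\theta^2,\ov{\theta^2},\psi)$ is a coframe on $\cG$ and your expression for $\exd\bT_1$ has no $\psi$-component, and the dichotomy ``$T_1$ vanishes nowhere or identically'' requires connectedness of the neighborhood, which is harmless in this local setting.
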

\begin{proof}
 It was obtained in \eqref{eq:invariants} that  pre-K\"ahler complex surfaces obtained via symmetry reduction of the flat 2-nondegenerate CR 5-manifold satisfy $T_2=0.$ The $\mathrm{U}(1)$-action on the function $T_1$ in \eqref{eq:cartan-curv-prekahler} can be found using the gauge transformation of the Cartan connection $\varphi(p)$  in  \eqref{eq:Cartan-conn-curv}, at every $p\in \cG,$ i.e.  $\varphi(p)\to \varphi(\bu^{-1}p)=\bu^{-1}\eta\bu+\bu^{-1}\exd\bu$ where $\bu$ is the diagonal matrix $\mathrm{diag}(1, e^{-\ri r},e^{\ri r}).$ One obtains  $T_1(\bu^{-1}p)=e^{-2\ri r}T_1(p).$ Infinitesimally, this transformation law is encoded in the first equation in \eqref{eq:Bianchies-closed-system}. By Theorem \ref{thm:cart-geom-descr}, if $\bT_2=0$ then non-flatness of  a pre-K\"ahler structure implies $\bT_1\neq 0.$ Writing $T_1=t_1+\ri t_2,$ the induced  $\mathrm{U}(1)$ action on  $t_2$ is given by $t_2(p)\to -2t_1\sin(2r)+2t_2\cos(2r).$ As a result, given a 3-adapted coframe on $M$ for such non-flat pre-K\"ahler structure,  by setting $r=\tan^{-1}(\tfrac{t_2}{t_1})$ one always obtains  a section $s\colon M\to \cG$ with respect to which $T_1=\ov{T_1}=t_1$ is a non-zero real-valued function on $M.$ The pull-back of the first equation of  \eqref{eq:Bianchies-closed-system} to this section gives
 \begin{equation}\label{eq:reduced-psi}
   s^*\psi=-\frac{T_3}{2t_1}(\ov{\theta^2}-\theta^2)+\frac{\ri}{4t_1}(\ov{T_{1;1}}\ov{\theta^1}-T_{1;1}\theta^1),
    \end{equation}
 where  the function  $s^* T_{1;1}$ can no longer be interpreted as the coframe derivative of $T_1$ on the section $s.$ 

 Inserting expression \eqref{eq:reduced-psi} in structure equations  \eqref{eq:dtheta12-on-cG}, the condition of being homogeneous corresponds to  those values of $(t_1,T_3,S)$  for which  the pull-back of the structure equations by $s$ correspond to Maurer-Cartan equations for some Lie algebra. In particular, the  coefficients of the reduced structure equations need to be constant.  For instance, the pull-back of $\exd \theta^1$ is expressed as
 \[\exd\theta^1\equiv  \frac{\ri T_3}{t_1}\theta^1\w(\ov{\theta^2}-\theta^2)\mod\{\theta^2\},\]
 wherein, by abuse of notation, we have suppressed $s^*$. Being locally homogeneous implies that either $T_3=0$ or  $\tfrac{T_3}{t_1}$ is constant. Using the pull-back of the first and second equations in \eqref{eq:Bianchies-closed-system} to $s,$ one obtains
 \[
   \begin{aligned}
     \exd\left(\frac{T_3}{t_1}\right)&\equiv \frac{T_3^2+t_1^2}{2t_1^2}\left(\theta^2+\ov{\theta^2}\right)\mod\{\theta^1,\ov{\theta^1}\},\\
     \exd T_3&\equiv \ri t_1\left(\theta^2+\ov{\theta^2}\right)\mod\{\theta^1,\ov{\theta^1}\}.
   \end{aligned}
 \]
 As a result, being homogeneous implies  that $t_1=0$ is a necessary condition  on  $s\colon M\to \cG$ which  contradicts the assumption $T_1\neq 0$ on   $s.$ Hence, for such pre-K\"ahler structures, being locally homogeneous implies flatness. 
\end{proof}

Propositions \ref{prop: hom implies symm red} and \ref{prop:symm-red-homog-CR-flat-pre-kahl} prove the following.
\begin{theorem}\label{thm:homog-prekahler}
A  2-nondegenerate pre-K\"ahler complex surface is locally homogeneous if and only if it is flat.
\end{theorem}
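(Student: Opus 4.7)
The plan is to combine the two immediately preceding propositions; no additional machinery is needed. The converse direction is immediate: by Theorem \ref{thm:cart-geom-descr}, flatness means the structure is locally equivalent to the homogeneous space $G/\mathrm{U}(1)$ with $G=\RR^2\rtimes\mathrm{SL}(2,\RR)$, which is tautologically locally homogeneous.

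For the forward direction, suppose $(M,g,\omega)$ is a locally homogeneous 2-nondegenerate pre-K\"ahler complex surface. First I would invoke Proposition \ref{prop: hom implies symm red} to realize $(M,g,\omega)$ as a symmetry reduction of a flat 2-nondegenerate CR 5-manifold; the passage from $M$ to the associated CR 5-manifold is provided by the pre-K\"ahler--Sasakian correspondence (Theorem \ref{thm: correspondence theorem}). With $(M,g,\omega)$ thereby placed inside the family studied in \ref{sec:crit-spec-sympl}, Proposition \ref{prop:symm-red-homog-CR-flat-pre-kahl} then forces $(M,g,\omega)$ itself to be flat.

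The two propositions carry all of the conceptual weight. Proposition \ref{prop: hom implies symm red} rests on the classification in \cite{FK-CR}: the non-flat locally homogeneous 2-nondegenerate CR 5-manifolds have $5$-dimensional symmetry algebras spanned by the explicit vector fields \eqref{eqn: 5d. hom CR symmetries a}--\eqref{eqn: 5d. hom CR symmetries b}, whose centers are trivial, so no transverse infinitesimal CR symmetry commutes with the full symmetry algebra and hence no such CR structure can underlie a homogeneous pre-Sasakian (and thus homogeneous pre-K\"ahler) reduction. Proposition \ref{prop:symm-red-homog-CR-flat-pre-kahl} does the Cartan-geometric step: using the $\mathrm{U}(1)$-gauge freedom on $\cG\to M$ one normalizes $T_1$ to be real-valued, plugs the resulting expression \eqref{eq:reduced-psi} for $\psi$ into the structure equations \eqref{eq:dtheta12-on-cG} and Bianchi identities \eqref{eq:Bianchies-closed-system}, and observes that constancy of the reduced torsion coefficients (required by local homogeneity) is incompatible with $T_1\neq 0$.

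The main obstacle in the overall argument is therefore packaged into Proposition \ref{prop:symm-red-homog-CR-flat-pre-kahl}, specifically the step where one must verify that the only simultaneous solution of the homogeneity constraints and the Bianchi system is the trivial $T_1=0$; this is what prevents a nontrivial continuous family of homogeneous symmetry reductions of the flat CR 5-manifold from existing. Once those two propositions are in hand, the theorem follows as a one-line corollary.
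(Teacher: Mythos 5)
Your proposal is correct and follows exactly the paper's own argument: the theorem is stated as an immediate consequence of Propositions \ref{prop: hom implies symm red} and \ref{prop:symm-red-homog-CR-flat-pre-kahl}, with the converse direction being the tautological homogeneity of the flat model $G/\mathrm{U}(1)$ from Theorem \ref{thm:cart-geom-descr}. Your summaries of the content of the two propositions also accurately reflect how they are proved in the paper.
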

\section*{Acknowledgements}
The first author was supported by the Troms{\o} Research Foundation (project ``Pure Mathematics in Norway'') and the UiT Aurora project MASCOT. The second author was supported by the Institute for Basic Science (IBS-R032-D1).  The second author is grateful to Gerd Schmalz and Martin Kol\'{a}\v{r} for many illuminating discussions on Sasakian geometry. The  EDS calculations  are done using Jeanne Clelland's \texttt{Cartan} package in Maple and also by the python package \texttt{dgcv}  \cite{dgcv}.

\bibliographystyle{alpha}      
\bibliography{MSBib}  
\end{document}